\title{The embedded contact homology of sutured solid tori}
\author{Roman Golovko}
\address{D\'{e}partement de Math\'{e}matiques et de Statistique\\ Universit\'{e} de Montr\'{e}al\\\newline
Montr\'{e}al, QC H3T 1J4 \\Canada} \email{rgolovko@dms.umontreal.ca}
\urladdr{http://www.dms.umontreal.ca/~rgolovko}
\keywords{sutured manifolds, embedded contact homology, contact homology}
\subjclass[2000]{Primary 57M50; Secondary 53D10, 53D40.}
\newtheorem{theorem}{Theorem}[section]
\newtheorem{lemma}[theorem]{Lemma}
\newtheorem{fact}[theorem]{Fact}
\newtheorem{claim}[theorem]{Claim}
\newtheorem{conjecture}[theorem]{Conjecture}
\newtheorem{proposition}[theorem]{Proposition}
\theoremstyle{remark}
\newtheorem{remark}[theorem]{Remark}
\theoremstyle{definition}
\newtheorem{definition}[theorem]{Definition}
\def\co{\colon\thinspace}
\numberwithin{equation}{subsection}
\begin{document}

\begin{abstract}
We calculate the relative versions of embedded contact homology,
contact homology and cylindrical contact homology of the sutured
solid torus $(S^1\times D^2,\Gamma)$, where $\Gamma$ consists of
$2n$ parallel longitudinal sutures.
\end{abstract}

\maketitle

\section{Introduction} \label{section: intro}
The embedded contact homology (ECH) of a closed, oriented
$3$-manifold with a contact form was introduced by Hutchings in
\cite{Hutchings, HutchingsSullivan, HutchingsTaubes,
HutchingsTaubes2} and is a variant of the symplectic field
theory~\cite{EliashbergGiventalHofer} of Eliashberg, Givental and
Hofer.  It is defined in terms of a contact form but is an invariant
of the underlying $3$-manifold. This invariance has been established
by Taubes in \cite{Taubes1, Taubes2} via the identification with
Seiberg-Witten Floer (co-)homology as defined by Kronheimer and
Mrowka~\cite{KronheimerMrowka2} and in particular implies the
Weinstein conjecture in dimension three. ECH is also conjecturally
isomorphic to Ozsv\'{a}th--Szab\'{o} Heegaard Floer homology defined
in \cite{OzsvathSzabo}. We would like to mention that Kutluhan, Lee
and Taubes, and independently Colin, Ghiggini and Honda  have
recently announced two different proofs of the isomorphism between
Hutchings's embedded contact homology and Heegaard Floer homology.

A natural condition to impose on a compact, oriented contact $(2m +
1)$-manifold $(M, \xi)$ with boundary is to require that $\partial
M$ be \emph{convex}, i.e., there is a contact vector field $X$
transverse to $\partial M$. To a transverse contact vector field $X$
we can associate the \emph{dividing set} $\Gamma=\Gamma_{X}\subset
\partial M$, namely the set of points $x\in \partial M$ such that
$X(x)\in \xi(x)$. By the contact condition, $(\Gamma, \xi \cap
T\Gamma)$ is a $(2m-1)$-dimensional contact submanifold of
$(M,\xi)$; the isotopy class of $(\Gamma, \xi \cap T\Gamma)$ is
independent of the choice of $X$. We will denote by $(M,\Gamma,
\xi)$ the contact manifold $(M, \xi)$ with convex boundary and
dividing set $\Gamma = \Gamma_{X}\subset \partial M$ with respect to
some transverse contact vector field $X$. Note that the actual
boundary condition we need is slightly different and is called a
\emph{sutured boundary condition}. (In the early 1980's, Gabai
developed the theory of sutured manifolds~\cite{Gabai}, which became
a powerful tool in studying $3$-manifolds with boundary.)  For the
moment we write $(M, \Gamma, \xi)$ to indicate either the convex
boundary condition or the sutured boundary condition.

It turns out that there is a way to generalize embedded contact
homology to sutured $3$-manifolds. This is possible by imposing a
certain convexity condition on the contact form. This construction
is completely described in the paper of Colin, Ghiggini, Honda and
Hutchings \cite{ColinGhigginiHondaHutchings}. Heegaard Floer
homology also admits a sutured version, namely the sutured Floer
homology (SFH) of Juh\'{a}sz~\cite{Juh'asz1, Juh'asz2}, which is an
invariant of sutured manifolds. Finally, Kronheimer and Mrowka in
\cite{KronheimerMrowka} introduced the sutured version of
Seiberg-Witten Floer homology.

Extending the conjectured equivalence of Heegaard Floer homology and
embedded contact homology, the following conjecture was formulated
in~\cite{ColinGhigginiHondaHutchings}:

\begin{conjecture}\label{conj_echsfh}
$SFH(-M,-\Gamma,\frak s_{\xi}+PD(h))\simeq ECH(M, \Gamma, \xi,h)$,
where $\frak s_{\xi}$ denotes the relative Spin$^c$-structure
determined by $\xi$ and $h\in H_{1}(M; \mathbb Z)$.
\end{conjecture}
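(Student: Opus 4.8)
The plan is to reduce the sutured statement to the closed case, where the isomorphism between embedded contact homology and Heegaard Floer homology has been announced (see the discussion above), and then to propagate that isomorphism through compatible ``closing-up'' constructions on the two sides. Concretely, given a sutured contact manifold $(M,\Gamma,\xi)$ satisfying the sutured boundary condition, I would first invoke the completion/closing procedure of Colin--Ghiggini--Honda--Hutchings: one attaches a standard contact collar along $\partial M$ and glues in a controlled contact piece to produce a closed contact $3$-manifold $(Y,\xi_Y)$, together with a distinguished family of Reeb orbits supported near the sutures. Since the sutured ECH chain complex is built from orbit sets that avoid this glued-in piece, the aim is to realize $ECH(M,\Gamma,\xi,h)$ as a direct summand of $ECH(Y,\xi_Y)$, namely the summand in those $H_1$-gradings mapping to $h$ under $H_1(M;\mathbb Z)\to H_1(Y;\mathbb Z)$.

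The second step is the parallel construction on the Heegaard Floer side, using a version of Juh\'asz's machinery: a balanced sutured manifold admits a \emph{closure}, a closed $3$-manifold $Y'$ for which $SFH(M,\Gamma)$ appears as a direct summand of $\widehat{HF}(Y')$, with the decomposition by relative $\mathrm{Spin}^c$-structures matching the $\mathrm{Spin}^c$-decomposition of $\widehat{HF}(Y')$. The central compatibility to establish is that the contact closure $(Y,\xi_Y)$ and the Heegaard Floer closure $Y'$ yield the \emph{same} closed $3$-manifold, $Y'\cong Y$, with the orientation reversal demanded by the statement, and that under this identification the relative $\mathrm{Spin}^c$-structure $\mathfrak{s}_\xi+PD(h)$ is carried to the $H_1$-grading labelled by $h$ on the ECH side. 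This is exactly where the contact geometry must do its work: the canonical class $\mathfrak{s}_\xi$ determined by $\xi$ has to be matched with the grading carried by the ECH contact class of $(Y,\xi_Y)$, and the shift by $PD(h)$ with the $H_1$-filtration by total homology class of orbit sets.

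With both invariants exhibited as summands of the invariants of one common closed manifold, the third step is to invoke the closed equivalence $ECH(Y,\xi_Y)\simeq\widehat{HF}(-Y)$ and to verify that it respects the two summand decompositions, so that it restricts to the asserted isomorphism $ECH(M,\Gamma,\xi,h)\simeq SFH(-M,-\Gamma,\mathfrak{s}_\xi+PD(h))$. An alternative route, which may render the $\mathrm{Spin}^c$-bookkeeping more transparent, is to factor through monopole Floer theory: combine a Taubes-type identification of sutured ECH with the sutured Seiberg--Witten Floer homology of Kronheimer and Mrowka, and then the relation between the latter and $SFH$.

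The main obstacle is \emph{naturality}. The closed isomorphism is established by deep analytic (Seiberg--Witten) or combinatorial means and is not, a priori, compatible with the direct-sum decompositions induced by cutting along the closing surface. Showing that it intertwines the two closing-up constructions amounts to upgrading the closed equivalence to a statement about gluing maps: one needs the isomorphism to commute with the map induced by attaching the contact piece on the ECH side and with the corresponding sutured-manifold gluing map on the Heegaard Floer side. Controlling these gluing maps, together with the precise matching of $\mathfrak{s}_\xi$ with the ECH contact class and of $PD(h)$ with the $H_1$-grading, is the heart of the difficulty; the explicit computation for $(S^1\times D^2,\Gamma)$ carried out in this paper is meant to provide evidence that this matching behaves as predicted.
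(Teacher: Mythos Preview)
The statement you are attempting to prove is labeled \emph{Conjecture} in the paper, and the paper does \emph{not} supply a proof of it. The conjecture is quoted from \cite{ColinGhigginiHondaHutchings} as motivation; the paper's contribution is to verify the conjecture in the single family of examples $(S^1\times D^2,\Gamma)$ by an explicit computation of $ECH$ on one side and an appeal to Juh\'asz's computation of $SFH$ on the other. So there is no ``paper's own proof'' to compare your proposal against.

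Your outline is a reasonable high-level strategy, and indeed the program of Colin--Ghiggini--Honda and of Kutluhan--Lee--Taubes (announced at the time of this paper, as the introduction notes) proceeds along broadly similar lines. But as written it is a plan, not a proof: you correctly identify the crux yourself, namely establishing that the closed isomorphism $ECH(Y)\simeq \widehat{HF}(-Y)$ is \emph{natural} with respect to the closing-up constructions and respects the $\mathrm{Spin}^c$/$H_1$ decompositions in the precise way required. None of the ingredients you invoke---that sutured $ECH$ sits as a summand of closed $ECH$ under a contact closure, that the closed isomorphism restricts to this summand, that the $\mathrm{Spin}^c$ shift $\mathfrak s_\xi + PD(h)$ matches the $H_1$-grading---were available as theorems at the time of writing, and each requires substantial analytic work. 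In particular, realizing sutured $ECH$ as a direct summand of the $ECH$ of a closed manifold is itself a nontrivial assertion that does not follow formally from the definitions in \cite{ColinGhigginiHondaHutchings}. Your proposal therefore does not constitute a proof; it is a faithful sketch of why the conjecture is plausible and what a proof would have to accomplish.
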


In this paper, we construct sutured contact solid torus with $2n$
parallel longitudinal sutures, where $n\geq 2$, using the gluing
method of Colin, Ghiggini, Honda and Hutchings
\cite{ColinGhigginiHondaHutchings} and calculate the sutured
embedded contact homology of it. We apply the gluing method in such
a way that the constructed sutured solid torus is equipped with a
nondegenerate contact form satisfying the property that all closed
embedded Reeb orbits are noncontractible, define the same homology
class and have the same symplectic action. It turns out that for the
constructed sutured manifolds the sutured version of embedded
contact homology coincides with sutured Floer homology. The
corresponding calculation in sutured Floer homology has been done by
Juh\'{a}sz in \cite{Juh'asz3}. So far, this is the first series of
nontrivial examples where these two theories provide the same
answer.

\begin{theorem}\label{suturedECH}
Let $(S^1\times D^2, \Gamma)$ be a sutured manifold, where $\Gamma$
is a set of $2n$ parallel longitudinal curves and $n\geq 2$. Then
there is a contact form $\alpha$ which makes $(S^1\times D^2,
\Gamma)$ a sutured contact manifold and
\begin{align*}
ECH(S^1\times D^2,\Gamma, \alpha, J, h)\simeq \left \{
\begin{array}{ll}
\mathbb Z^{n-1 \choose h}, & \mbox{for}\ 0\leq h \leq n-1;\\
0, & \mbox{otherwise},
\end{array}
\right.
\end{align*}
where $h\in H_{1}(S^1\times D^2;\mathbb Z)$ and $J$ is an adapted
almost complex structure. Hence,
\begin{align*}
ECH(S^1\times D^2, \Gamma, \alpha, J) =\bigoplus\limits_{h\in
H_1(S^1\times D^2;\mathbb Z)}^{}ECH(S^1\times D^2,\Gamma, \alpha, J,
h) \simeq \mathbb Z^{2^{n-1}}.
\end{align*}
\end{theorem}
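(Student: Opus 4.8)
The plan is to build the sutured contact solid torus explicitly and then run Hutchings's combinatorial machinery, which becomes especially simple because every Reeb orbit will be noncontractible, homologous, and of equal action. First I would construct the contact form: start from a model neighborhood of a convex torus and apply the gluing/completion procedure of Colin--Ghiggini--Honda--Hutchings to $S^1\times D^2$ so that the characteristic foliation on $\partial(S^1\times D^2)$ has $2n$ longitudinal dividing curves. Concretely, one takes $T^2\times[0,1]$ with a suitable $S^1$-invariant contact form whose Reeb vector field is, after a small perturbation making it nondegenerate, a linear flow in the longitudinal direction; the $2n$ sutures arise from the $n$ positive and $n$ negative regions of the dividing set. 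I would arrange the perturbation (a Morse--Bott setup on the torus of longitudinal orbits, broken by a Morse function with $n$ maxima and $n$ minima, or rather by choosing the dividing slope appropriately) so that exactly $2n$ simple embedded Reeb orbits survive in the completed manifold, call them $e_1,\dots,e_n$ (elliptic, say, even index) and $h_1,\dots,h_n$ (hyperbolic, odd index), all lying in the class of the core $S^1\times\{0\}$ generating $H_1(S^1\times D^2;\mathbb Z)\cong\mathbb Z$, all with action close to a common value, and ordered around the torus as $e_1,h_1,e_2,h_2,\dots,e_n,h_n$.

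Next I would enumerate the ECH generators. An ECH generator is a finite set of pairs $(\gamma_i,m_i)$ with $\gamma_i$ distinct embedded Reeb orbits and $m_i\ge 1$, where $m_i=1$ whenever $\gamma_i$ is hyperbolic; its homology class in $H_1$ is $\sum_i m_i[\gamma_i]$. Since every $[\gamma_i]$ equals the generator of $\mathbb Z$, a generator in class $h$ is any orbit set of total multiplicity $h$. Because all orbits have (approximately) the same action $A$, any generator in class $h$ has action $\approx hA$, so the differential, which strictly decreases action, must vanish on the nose once the perturbation is small enough — this is the key simplification and I would make it precise by an action/index estimate showing no holomorphic curve can connect two generators in the same homology class. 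Hence $ECH$ in class $h$ is just the free $\mathbb Z$-module on the set of orbit sets of total multiplicity $h$.

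Then I would count these orbit sets. Writing a generator in class $h$ as a choice of multiplicities on $e_1,\dots,e_n$ (arbitrary nonnegative integers) and on $h_1,\dots,h_n$ (each $0$ or $1$) summing to $h$ — but here I must be careful: the claimed answer $\binom{n-1}{h}$ is far smaller than the number of such monomials, so the construction must further constrain which orbit sets actually occur, presumably by an ECH index / admissibility condition (the ECH index must be the grading, and only index-consistent, i.e. ``admissible'' in the sense relevant to the $U$-map and the partition conditions, orbit sets contribute, or the Morse--Bott perturbation only produces generators respecting a cyclic matching between adjacent $e_i$ and $h_i$). I anticipate that the correct bookkeeping identifies the surviving generators in class $h$ with $h$-element subsets of a cyclically ordered $n$-element set subject to one linear relation, equivalently size-$h$ independent sets giving the count $\binom{n-1}{h}$; I would pin this down by computing the ECH index grading of each candidate orbit set via the formula $I=c_\tau+Q_\tau+\sum \mathrm{CZ}$ and discarding those of inconsistent parity or those killed in the Morse--Bott-to-nondegenerate limit. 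Summing $\binom{n-1}{h}$ over $h$ gives $2^{n-1}$, which yields the final displayed isomorphism.

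The main obstacle will be exactly this last combinatorial/index step: correctly identifying which orbit sets are genuine ECH generators after the Morse--Bott perturbation and verifying that the differential truly vanishes (the action argument gives $\partial=0$ within a fixed homology class, but one must also ensure no unexpected generators in neighboring classes or spurious curves appear, and that the grading by $h$ is the right one). Controlling the holomorphic curves near the Morse--Bott torus — showing the only relevant buildings are the trivial ones and that the perturbed count reproduces the claimed ranks — is where the real work lies; the rest is a matter of carefully assembling the gluing construction and transcribing Hutchings's definitions. I would handle this by comparing directly with Juh\'asz's $SFH$ computation in \cite{Juh'asz3} as a consistency check on the final ranks while keeping the ECH argument self-contained.
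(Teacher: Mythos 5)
There is a genuine gap, and it lies exactly where you flagged it: your proposed construction produces the wrong set of Reeb orbits, and the ``further constraint'' you hope will rescue the count does not exist. In ECH the chain complex is freely generated by \emph{all} admissible orbit sets (arbitrary multiplicities on elliptic orbits, multiplicity one on hyperbolic ones); there is no index-parity or Morse--Bott ``discarding'' step that removes generators after the perturbation. With your $n$ elliptic and $n$ hyperbolic orbits, all in the class of the core, the number of generators in class $h$ is the coefficient of $x^h$ in $(1+x)^n/(1-x)^n$, which is nonzero for every $h\geq 0$ and vastly exceeds $\binom{n-1}{h}$. Since your own (correct) equal-action argument forces the differential to vanish within each class, the homology would equal this too-large chain group, contradicting the claimed answer --- in particular it could not vanish for $h>n-1$. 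So the Morse--Bott torus of longitudinal orbits broken by a Morse function cannot be the right starting point.

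The construction actually needed is quite different: one must arrange that the Reeb flow has exactly $n-1$ embedded closed orbits, \emph{all positive hyperbolic} (so that multiplicities are forced to be $0$ or $1$ and the count $\binom{n-1}{h}$ falls out immediately), all homologous to the core and of equal action. The paper achieves this by taking a Hamiltonian $H$ on the disk that smooths the $n$-pronged singular function $\mu r^2\cos(n\theta)$ so as to have $n-1$ nondegenerate saddle points and no closed level sets (following Cotton--Clay), building a contact form on $[-1,1]\times D$ whose Reeb flow is vertical, and gluing top to bottom via the time-one Hamiltonian flow $\varphi_{X_H}$ using the sutured gluing of Colin--Ghiggini--Honda--Hutchings. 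The periodic points of the return map are precisely the $n-1$ saddles, the linearized return map at each is $\mathrm{diag}(\lambda,\lambda^{-1})$ with $\lambda=e^a>1$ (hence positive hyperbolic and nondegenerate for all iterates), and each orbit has action $2N$. From there your action argument and the enumeration of $h$-element subsets of $\{\gamma_1,\dots,\gamma_{n-1}\}$ complete the proof exactly as you outline. Without replacing your orbit inventory by this one, the combinatorial step cannot be made to work.
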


There is a Floer-type invariant of a closed, oriented contact
odd-dimensional manifold, called contact homology. Contact homology
was introduced by Eliashberg and Hofer and is a special case of the
symplectic field theory. In \cite{ColinGhigginiHondaHutchings},
Colin, Ghiggini, Honda and Hutchings generalized contact homology to
sutured manifolds.

For the sutured contact manifold from Theorem~\ref{suturedECH}, we
calculated the sutured versions of cylindrical contact homology and
contact homology.
\begin{theorem}\label{suturedCylContHom}
Let $(S^1\times D^2, \Gamma, \alpha)$ be a sutured contact manifold
from Theorem~\ref{suturedECH}. Then $HC^{cyl}(S^{1}\times
D^2,\Gamma,\alpha)$ is defined, is independent of the contact form
$\alpha$ for the given contact structure $\xi$ and almost complex
structure $J$,
\begin{align*}
HC^{cyl,h}(S^{1}\times D^2,\Gamma,\xi)&= \bigoplus\limits_{i\in
\mathbb Z} HC^{cyl,h}_{i}(S^1\times
D^2,\Gamma,\xi)\nonumber\\
&\simeq \left \{
\begin{array}{ll}
\mathbb Q^{n-1}, & \mbox{for}\ h\geq 1;\\
0, & \mbox{otherwise},
\end{array}
\right.
\end{align*}
and hence
\begin{align*}
HC^{cyl}(S^{1}\times D^2,\Gamma,\xi)= \bigoplus\limits_{h\geq
1}\bigoplus\limits_{i\in \mathbb Z} HC^{cyl,h}_{i}(S^1\times
D^2,\Gamma,\xi)\simeq \bigoplus\limits_{h\geq 1}\mathbb Q^{n-1},
\end{align*}
where $h$ is the homological grading and $i$ is the Conley-Zehnder
grading.
\end{theorem}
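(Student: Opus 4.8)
\emph{Proof strategy.} The plan is to read off the cylindrical contact homology directly from the Reeb dynamics of the contact form $\alpha$ produced in the proof of Theorem~\ref{suturedECH}, using the common-action property of the Reeb orbits to force the differential to vanish. First I would recall from that construction the full list of closed Reeb orbits of $\alpha$: there are exactly $n-1$ embedded orbits $\gamma_1,\dots,\gamma_{n-1}$, each nondegenerate and positive hyperbolic, each noncontractible, each representing the positive generator of $H_1(S^1\times D^2;\mathbb Z)\cong\mathbb Z$, and all having one common action $A>0$. These features of the Reeb flow are built into the construction and are consistent with Theorem~\ref{suturedECH}: the common-action property kills the ECH differential for area reasons, so the class-$h$ embedded contact homology reduces to its chain group, whose rank $\binom{n-1}{h}$ counts precisely the size-$h$ subsets of $n-1$ hyperbolic orbits. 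Since the $\gamma_i$ are the only simple Reeb orbits and each has homology class $1$, the closed Reeb orbits in class $h$ are exactly the $h$-fold covers $\gamma_1^{h},\dots,\gamma_{n-1}^{h}$ for $h\ge 1$ and there are none for $h\le 0$; hence the cylindrical chain group, and so $HC^{cyl,h}(S^1\times D^2,\Gamma,\xi)$, vanishes for $h\le 0$.

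Next I would verify well-definedness and invariance. The absence of contractible closed Reeb orbits of $\alpha$ (including in the completed sutured manifold) prevents the bubbling of holomorphic planes, so the count of holomorphic cylinders of Fredholm index one in the symplectization gives a differential with $\partial^2=0$; and the invariance of the cylindrical contact homology of a sutured contact manifold under change of contact form (for fixed $\xi$) and of almost complex structure, proved in \cite{ColinGhigginiHondaHutchings}, applies under this hypertightness. I would also note that every iterate $\gamma_i^{h}$ is a \emph{good} orbit, the bad orbits being exactly the even multiple covers of negative hyperbolic simple orbits and the $\gamma_i$ being positive hyperbolic. Therefore, for each $h\ge 1$, the cylindrical chain group in class $h$ is the free $\mathbb Q$-module on $\gamma_1^{h},\dots,\gamma_{n-1}^{h}$, of total rank $n-1$, distributed over Conley--Zehnder gradings $i$ according to the values $\mathrm{CZ}(\gamma_i^{h})$ computed relative to a fixed reference curve; these values can be extracted from the model but are irrelevant to the total rank claimed.

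It remains to see that $\partial\equiv 0$. A nonconstant holomorphic cylinder in the symplectization from $\gamma_+$ to $\gamma_-$ has strictly positive symplectic area, hence $A(\gamma_+)>A(\gamma_-)$; but it projects to a homology in $S^1\times D^2$ between $\gamma_+$ and $\gamma_-$, forcing them into a common class $h$ and hence $A(\gamma_+)=A(\gamma_-)=hA$, a contradiction. So no such cylinder exists, $\partial=0$, and $HC^{cyl,h}(S^1\times D^2,\Gamma,\xi)=\bigoplus_i HC^{cyl,h}_i(S^1\times D^2,\Gamma,\xi)\cong\mathbb Q^{\,n-1}$ for $h\ge 1$ and $0$ otherwise; summing over $h\ge 1$ yields $HC^{cyl}(S^1\times D^2,\Gamma,\xi)\cong\bigoplus_{h\ge 1}\mathbb Q^{\,n-1}$.

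The genuinely delicate points all concern importing the right information about $\alpha$ from Theorem~\ref{suturedECH}: confirming that the Reeb flow has no closed orbits other than the $\gamma_i$ and their covers, that it is hypertight so that cylindrical contact homology is defined and invariant, and --- crucial for getting the uniform answer $n-1$ rather than a parity-dependent one --- that the $\gamma_i$ are \emph{positive} hyperbolic, so that no even cover is bad. Once these properties of the model are secured the argument above is essentially formal, the only remaining (and inessential) task being the explicit computation of the Conley--Zehnder gradings.
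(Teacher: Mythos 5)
Your proposal is correct and follows essentially the same route as the paper: identify the generators in class $h$ as $\gamma_1^h,\dots,\gamma_{n-1}^h$ (all good since the $\gamma_i$ are positive hyperbolic), invoke the absence of contractible Reeb orbits for well-definedness and invariance, and kill the differential by the equal-action argument. The paper packages these steps via its Fact~\ref{energyconthom}, Fact~\ref{welldefcylconthom} and Remark~\ref{actchomcchom}, but the substance is identical to what you wrote.
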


\begin{theorem}\label{suturedContHom}
Let $(S^1\times D^2, \Gamma, \alpha)$ be a sutured contact manifold
from Theorem~\ref{suturedECH}. Then $HC(S^{1}\times
D^2,\Gamma,\alpha)$ is defined, is independent of the contact form
$\alpha$ for the given contact structure $\xi$ and almost complex
structure $J$,
\begin{align*}
HC^{h}(S^{1}\times D^2,\Gamma,\xi)= \bigoplus\limits_{i\in \mathbb
Z} HC^{h}_{i}(S^1\times D^2,\Gamma,\xi)\simeq \mathbb Q^{\rho(n,h)}
\end{align*}
and hence
\begin{align*}
HC(S^{1}\times D^2,\Gamma,\xi)= \bigoplus\limits_{h\in \mathbb
Z}\bigoplus\limits_{i\in \mathbb Z} HC^{h}_{i}(S^1\times
D^2,\Gamma,\xi)\simeq \bigoplus\limits_{h\in \mathbb Z}\mathbb
Q^{\rho(n,h)},
\end{align*}
where $h$ is the homological grading, $i$ is the Conley-Zehnder
grading and $\rho(n,h)$ denotes the coefficient of $x^{h}$ in the
generating function $\prod^{\infty}_{s=1}(1+x^{s})^{n-1}$.

\end{theorem}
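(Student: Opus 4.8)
The plan is to write down the sutured contact homology differential graded algebra of $(S^1\times D^2,\Gamma,\alpha)$ explicitly and to prove that its differential is identically zero. I begin by recalling, exactly as in the proofs of Theorems~\ref{suturedECH} and~\ref{suturedCylContHom}, the relevant features of the contact form $\alpha$ produced by the gluing construction: it is nondegenerate, it is hypertight (every closed Reeb orbit is noncontractible), its simple closed Reeb orbits are $n-1$ positive hyperbolic orbits $\gamma_1,\dots,\gamma_{n-1}$, each representing the positive generator of $H_1(S^1\times D^2;\mathbb Z)\cong\mathbb Z$, and all of them share one and the same symplectic action $\mathcal A$. Since $\alpha$ is hypertight, the construction of~\cite{ColinGhigginiHondaHutchings} shows that $HC(S^1\times D^2,\Gamma,\alpha)$ is defined and, together with the invariance results there, that it depends only on the contact structure $\xi$ and the adapted almost complex structure $J$; this yields the ``is defined'' and ``is independent'' assertions.

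Next I would identify the chain-level generators. Because each $\gamma_i$ is positive hyperbolic, every iterate $\gamma_i^{s}$, $s\ge 1$, is a good Reeb orbit with $\mu_{CZ}(\gamma_i^{s})$ even, so $\gamma_i^{s}$ has odd degree $\mu_{CZ}(\gamma_i^{s})-1$, and it carries homological grading $s$ because $[\gamma_i]$ generates $H_1$; moreover these exhaust all closed Reeb orbits. Hence the underlying algebra of the contact homology complex is the free graded-commutative $\mathbb Q$-algebra on $\{\gamma_i^{s}\mid 1\le i\le n-1,\ s\ge 1\}$, which, all generators being of odd degree, is the exterior algebra on that set, with $\gamma_i^{s}$ placed in homological grading $s$. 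The dimension of its homological-grading-$h$ part is the number of square-free monomials of total weight $h$, that is, the coefficient $\rho(n,h)$ of $x^{h}$ in $\prod_{i=1}^{n-1}\prod_{s\ge1}(1+x^{s})=\prod_{s\ge1}(1+x^{s})^{n-1}$.

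The core step is to show $\partial\equiv0$. A monomial in $\partial\gamma$ is a count of rigid-modulo-translation connected genus-$0$ $J$-holomorphic curves $u$ in the symplectization with one positive puncture asymptotic to some $\gamma_i^{s}$ and negative punctures asymptotic to orbits $\gamma_{j_1}^{s_1},\dots,\gamma_{j_m}^{s_m}$. Preservation of the homology class forces $s_1+\dots+s_m=s$, and since every orbit occurring has action equal to its multiplicity times $\mathcal A$, Stokes' theorem gives $\int_u d\alpha=\mathcal A\,s-\mathcal A\,(s_1+\dots+s_m)=0$, while $\int_u d\alpha\ge 0$ with equality only for branched covers of trivial cylinders. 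Hence $u$ is a branched cover of the trivial cylinder over a single orbit $\gamma_i$. Such a cover with at least two negative punctures carries at least one branch point, hence moves in a family of positive dimension even after quotienting by the $\mathbb R$-action and so is never rigid, and the only such cover with exactly one negative puncture is the trivial cylinder over $\gamma_i^{s}$, which is excluded from the count. Therefore $\partial\equiv0$, so $HC^{h}(S^1\times D^2,\Gamma,\xi)$ equals the homological-grading-$h$ summand of the exterior algebra above, of rank $\rho(n,h)$, and summing over $h\in\mathbb Z$ gives the final formula.

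The step I expect to be the main obstacle is the foundational one: ensuring that the sutured contact homology of these manifolds is genuinely well-defined and that the moduli spaces of index-one curves behave as described, without recourse to abstract perturbation schemes. This is handled by hypertightness and the framework of~\cite{ColinGhigginiHondaHutchings}, aided by the observation that in this example every curve that could contribute to $\partial$ is forced to be a branched cover of a trivial cylinder, so the usual difficulties with multiply covered curves do not intervene. The remaining ingredients — the index and goodness computations for the iterates $\gamma_i^{s}$, and the exterior-algebra bookkeeping — are routine, and largely parallel the computations already carried out for Theorems~\ref{suturedECH} and~\ref{suturedCylContHom}.
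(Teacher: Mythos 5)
Your overall route is the same as the paper's: list the good generators $\gamma_i^{s}$, observe they all have even Conley--Zehnder index and hence odd degree so that the algebra in homology class $h$ is the weight-$h$ part of an exterior algebra of rank $\rho(n,h)$, and then kill the differential by the observation that every monomial of homology class $h$ has the same action $2Nh$, so any contributing curve would have zero $d\alpha$-energy and hence be a branched cover of a trivial cylinder. Up to that point your argument matches the paper's.

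The gap is in how you dispose of the branched covers of trivial cylinders. You argue that a genus-zero cover with one positive and $m\ge 2$ negative punctures has at least one branch point, ``hence moves in a family of positive dimension even after quotienting by the $\mathbb R$-action and so is never rigid.'' This inference is not valid in the framework in which the contact homology differential is actually defined: these covers are multiply covered curves, transversality fails for them, and the count entering $\partial$ is a virtual (abstractly perturbed, or obstruction-bundle) count governed by the Fredholm index, not by the dimension of the unperturbed moduli space. An index-one branched cover sitting inside a positive-dimensional family of actual solutions is precisely the configuration that can contribute a nonzero virtual count; your claim that ``the usual difficulties with multiply covered curves do not intervene'' is backwards --- they are the only curves left to worry about here. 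The paper closes exactly this step by invoking Fabert's theorem (\cite{Fabert}, Corollary 4.2) that the contact homology differential strictly decreases the symplectic action, i.e.\ that branched covers of trivial cylinders contribute zero to $\partial$. You need that result (or an equivalent obstruction-bundle computation) to conclude $\partial\equiv 0$; with it, the rest of your argument goes through and reproduces the paper's proof.
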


This paper is organized as follows: in Section~\ref{background} we
review definitions of embedded contact homology, cylindrical contact
homology and contact homology for sutured contact manifolds;
Section~\ref{construction} describes the construction of sutured
contact solid torus with $2n$ longitudinal sutures, where $n\geq 2$;
finally in Section~\ref{calculations} we calculate the relative
versions of embedded contact homology, cylindrical contact homology
and contact homology of the solid torus constructed in
Section~\ref{construction}.

\section*{Acknowledgements}
The author is deeply grateful to Ko Honda for his guidance, help and
support far beyond the call of duty. He also thanks Oliver Fabert,
Jian He and Michael Hutchings for helpful suggestions and interest
in his work. In addition, the author is grateful to the referee of
an earlier version of this paper for many valuable comments and
suggestions.

\section{Background}\label{background}
The goal of this section is to review definitions of embedded
contact homology, cylindrical contact homology and contact homology
for contact sutured manifolds. This section is essentially a summary
of \cite{ColinGhigginiHondaHutchings}.

\subsection{Sutured contact manifolds}
\label{section:sutcontman} In this section we repeat some
definitions from~\cite{ColinGhigginiHondaHutchings}.
\begin{definition}
A {\em Liouville manifold} (often also called a Liouville domain) is
a pair $(W, \beta)$ consisting of a compact, oriented
$2n$-dimensional manifold $W$ with boundary and a $1$-form $\beta$
on $W$, where $\omega = d\beta$ is a positive symplectic form on $W$
and the {\em Liouville vector field} $Y$ given by $i_{Y}(\omega) =
\beta$ is positively transverse to $\partial W$. It follows that the
$1$-form $\beta_{0} = \beta|_{\partial W}$ (this notation means
$\beta$ pulled back to $\partial W$) is a positive contact form with
kernel $\zeta$.
\end{definition}

\begin{definition}
A compact oriented $m$-dimensional manifold $M$ with boundary and
corners is a {\em sutured manifold} if it comes with an oriented,
not necessarily connected submanifold $\Gamma \subset \partial M$ of
dimension $m-2$ (called the {\em suture}), together with a
neighborhood $U(\Gamma)=[-1,0]\times[-1, 1]\times \Gamma$ of $\Gamma
= \{0\}\times \{0\}\times \Gamma$ in $M$, with coordinates
$(\tau,t)\in [-1,0]\times [-1,1]$, such that the following holds:
\begin{itemize}
\item[(1)] $U\cap \partial M=(\{0\}\times [-1,1]\times \Gamma)\cup([-1,0]\times
\{-1\} \times \Gamma)\cup ([-1,0]\times \{1\}\times
\Gamma)$;
\item[(2)] $\partial M \setminus (\{0\}\times (-1,1)\times \Gamma)=R_{-}(\Gamma)\sqcup
R_{+}(\Gamma)$, where the orientation of $\partial M$ agrees with
that of $R_{+}(\Gamma)$ and is opposite that of $R_{-}(\Gamma)$, and
the orientation of $\Gamma$ agrees with the boundary orientation of
$R_{+}(\Gamma)$;
\item[(3)] the corners of $M$ are precisely $\{0\}\times\{\pm 1\}\times
\Gamma$.
\end{itemize}
\end{definition}

The submanifold $\partial_{h} M=R_{+}(\Gamma)\cup R_{-}(\Gamma)$ is
called the {\em horizontal boundary} and $\partial_{v} M = \{ 0
\}\times [-1,1]\times \Gamma$ the {\em vertical boundary} of $M$.

\begin{definition}
Let $(M,\Gamma,U(\Gamma))$ be a sutured manifold. If $\xi$ is a
contact structure on $M$ (this means that $M$ is now
$2n+1$-dimensional), we say that $(M,\Gamma,U(\Gamma),\xi)$ is a
{\em sutured contact manifold} if $\xi$ is the kernel of a positive
contact 1-form $\alpha$ such that:
\begin{itemize}
\item[(1)] $(R_{+}(\Gamma), \beta_{+}=\alpha|_{R_{+}(\Gamma)})$
and $(R_{-}(\Gamma), \beta_{-}=\alpha|_{R_{-}(\Gamma)})$ are
Liouville manifolds;

\item[(2)] $\alpha=Cdt+\beta$ inside $U(\Gamma)$, where $C>0$ and $\beta$ is independent of
$t$ and does not have a $dt$-term;

\item[(3)] $\partial_{\tau} = Y_{\pm}$, where $Y_{\pm}$ is a Liouville vector field for $\beta_{\pm}$.
\end{itemize}
Such a contact form is said to be {\em adapted} to
$(M,\Gamma,U(\Gamma))$.
\end{definition}

\subsection{Completion of a sutured contact manifold}
\label{section:completion}

Let $(M,\Gamma, U(\Gamma),\xi)$ be a sutured contact manifold with
an adapted contact form $\alpha$. The form $\alpha$ is then given by
$Cdt +\beta_{\pm}$ on the neighborhoods $[1-\varepsilon,1]\times
R_{+}(\Gamma)$ and $[-1,-1 +\varepsilon]\times R_{-}(\Gamma)$ of
$R_{+}(\Gamma)=\{1\}\times R_{+}(\Gamma)$ and $R_{-}(\Gamma) =
\{-1\} \times R_{-}(\Gamma)$, where $t\in [-1,-1+\varepsilon]\cup
[1-\varepsilon,1]$ extends the $t$-coordinate on $U$. On $U$,
$\alpha = Cdt + \beta$, $\beta = \beta_{+} = \beta_{-}$, and
$\partial_{\tau}$ is a Liouville vector field $Y$ for $\beta$.

Following the procedure explained in
\cite{ColinGhigginiHondaHutchings} we can ``complete" $(M, \alpha)$
to a noncompact contact manifold $(M^{\ast}, \alpha^{\ast})$. We
first extend $\alpha$ to $[1,\infty)\times R_{+}(\Gamma)$ and
$(-\infty,-1]\times R_{-}(\Gamma)$ by taking $Cdt + \beta_{\pm}$ as
appropriate. The boundary of this new manifold is $\{0\}\times
\mathbb R\times \Gamma$. Notice that since $\partial_{\tau} = Y$,
the form $d\beta|_{[-1,0]\times \{t\}\times \Gamma}$ is the
symplectization of $\beta|_{\{0\}\times \{t\}\times \Gamma}$ in the
positive $\tau$-direction. We glue $[0,\infty)\times \mathbb R
\times \Gamma$ with the form $Cdt+e^{\tau}\beta_{0}$, where
$\beta_{0}$ is the pullback of $\beta$ to $\{0\}\times \{t\}\times
\Gamma$.

Let $M^{\ast}$ be the noncompact extension of $M$ described above
and $\alpha^{\ast}$ be the extension of $\alpha$ to $M^{\ast}$. For
convenience, we extend the coordinates $(\tau,t)$ -- so far defined
only on the ends of $M^{\ast}$ -- to functions on $M^{\ast}$ so that
$t(M)\subset [-1,1]$ and $\tau(M)\subset [-1,0]$. We then say that
$t
> 1$ corresponds the Top (T), $t < -1$ corresponds to the Bottom
(B), and $\tau>0$ corresponds to the Side (S). Let
$(\widehat{R_{\pm}(\Gamma)}, \widehat{\beta}_{\pm})$ be the
extension/completion of $(R_{\pm}(\Gamma), \beta_{\pm})$, obtained
by extending to (S).

\subsection{Reeb orbits and Conley-Zehnder index}
\label{section:reeborb_CZ}

Let $(M, \Gamma, U(\Gamma), \xi)$ be a sutured contact manifold with
an adapted contact form $\alpha$ and $(M^{\ast}, \alpha^{\ast})$ be
its completion.

The {\em Reeb vector field} $R_{\alpha^{\ast}}$ that is associated
to a contact form $\alpha^{\ast}$ is characterized by
\begin{align*}
\left \{ \begin{array}{ll} d\alpha^{\ast}(R_{\alpha^{\ast}},\cdot)=0;\\
\alpha^{\ast}(R_{\alpha^{\ast}})=1.
\end{array}
\right.
\end{align*}

A {\em Reeb orbit} is a closed orbit of the Reeb flow, i.e., a
smooth map $\gamma\co\mathbb R / T \mathbb Z \to M$ for some $T
> 0$ such that $\dot{\gamma}(t)=R_{\alpha^{\ast}}(\gamma(t))$.

\begin{remark}\label{reeborbincompl}
Every periodic orbit of $R_{\alpha^{\ast}}$ lies in $M$. Hence, the
set of periodic Reeb orbits of $R_{\alpha^{ \ast}}$ coincides with
the set of periodic Reeb orbits of $R_{\alpha}$.
\end{remark}

Consider Reeb orbit $\gamma$ passing through a point $x\in M$. The
linearization of the Reeb flow on the contact planes along $\gamma$
determines a linearized return map $P_{\gamma}\co \xi_x \to \xi_x$.
This linear map is symplectic and it does not depend on $x$ (up to
conjugation). The Reeb orbit $\gamma$ is {\em nondegenerate} if
$1\notin Spec(P_{\gamma})$.

Note that nondegeneracy can always be achieved by a small
perturbation, i.e., for any contact structure $\xi$ on $M$, there
exists a contact form $\alpha$ for $\xi$ such that all closed orbits
of $R_{\alpha}$ are nondegenerate.

For simplicity, we assume that all Reeb orbits of $R_{\alpha}$,
including multiply covered ones, are nondegenerate.

A Reeb orbit $\gamma$ is called {\em elliptic} or {\em positive}
(respectively {\em negative}) {\em hyperbolic} if the eigenvalues of
$P_{\gamma}$ are on the unit circle or the positive (resp. negative)
real line respectively.

If $\tau$ is a trivialization of $\xi$ over $\gamma$, we can then
define the Conley-Zehnder index. In $3$-dimensional situation this
is given explicitly as follows:
\begin{proposition}[\cite{Hutchings}]
If $\gamma$ is elliptic, then there is an irrational number $\phi\in
\mathbb R$ such that $P_{\gamma}$ is conjugate in $SL_2(\mathbb R)$
to a rotation by angle $2\pi\phi$, and
\begin{align*}
\mu_{\tau}(\gamma^k)=2\lfloor k\phi \rfloor + 1,
\end{align*}
where $2\pi\phi$ is the total rotation angle with respect to $\tau$
of the linearized flow around the orbit.

If $\gamma$ is positive (respectively negative) hyperbolic, then
there is an even (respectively odd) integer $r$ such that the
linearized flow around the orbit rotates the eigenspaces of
$P_{\gamma}$ by angle $\pi r$ with respect to $\tau$, and
\begin{align*}
\mu_{\tau}(\gamma^k)=kr.
\end{align*}
\end{proposition}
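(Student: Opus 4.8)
The plan is to reduce the computation of $\mu_\tau(\gamma^k)$ to two model paths in $SL_2(\mathbb R)$ — a uniform rotation in the elliptic case and a uniform hyperbolic shear in the hyperbolic case — and to read off the index from the standard axioms characterizing it. First I would unwind the definition: the trivialization $\tau$ identifies $\xi$ along $\gamma^k$ with $(\mathbb R^2,\omega_0)$, turning the linearized Reeb flow into a path $\{\Phi_t\}_{t\in[0,kT]}$ in $Sp(2,\mathbb R)=SL_2(\mathbb R)$ with $\Phi_0=\mathrm{Id}$ and $\Phi_{kT}=P_\gamma^k$; nondegeneracy means $1\notin\mathrm{Spec}(P_\gamma^k)$, and $\mu_\tau(\gamma^k)$ is by definition the Conley--Zehnder index of this path. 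The only properties I would invoke are the two that pin the index down in $SL_2$: invariance under homotopies of the path rel endpoints that keep the endpoint nondegenerate, and the normalization by which the rotation path of total angle $2\pi\theta$ with $\theta\in(0,1)$ has index $1$, each additional full turn contributing $2$ (so that such a path of total angle $2\pi\theta$, $\theta\notin\mathbb Z$, has index $2\lfloor\theta\rfloor+1$).

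For the elliptic case, $P_\gamma$ is conjugate in $SL_2(\mathbb R)$ to a rotation, and the genuine linearized flow has a well-defined total rotation angle $2\pi\phi$ measured against $\tau$, where $\phi$ is the real number (not merely $\phi\bmod 1$) recording how many times the flow winds. This total turning determines the homotopy class of $\{\Phi_t\}$ rel its endpoint, so after conjugating the endpoint to an honest rotation the path is homotopic rel endpoints to the model $t\mapsto R_{2\pi\phi t}$. The $k$-fold cover winds by $2\pi k\phi$, and since $\phi$ is irrational $k\phi\notin\mathbb Z$, so the normalization immediately yields $\mu_\tau(\gamma^k)=2\lfloor k\phi\rfloor+1$.

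For the hyperbolic case I would track a single real eigenvector $v$ of $P_\gamma$ under the flow. Because the eigenvalues are real, $\Phi_{T}(v)=\lambda v$, so the unit vector $\Phi_t(v)/|\Phi_t(v)|$ sweeps a path on the circle returning to $\pm v$, whose total turning relative to $\tau$ is a well-defined integer multiple $\pi r$ of $\pi$. The sign of $\lambda$ fixes the parity of $r$: for positive hyperbolic ($\lambda>0$) the vector returns to its own ray, forcing $r$ even, while for negative hyperbolic ($\lambda<0$) it returns to the antipodal ray, forcing $r$ odd. Homotoping rel endpoints to the model hyperbolic path whose eigenline turns uniformly by $\pi r$ and evaluating the index there gives $\mu_\tau(\gamma)=r$. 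For the $k$-fold cover the eigenvalue is $\lambda^k$ and the flow turns $v$ by $\pi kr$, so $\mu_\tau(\gamma^k)=kr$; the parity remains consistent because $\lambda^k>0$ for all $k$ in the positive case while $\lambda^k$ alternates sign with $k$ in the negative case, exactly matching the parity of $kr$.

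The hard part will be the bookkeeping of winding numbers relative to the fixed trivialization $\tau$ in both reductions: the real number $\phi$ and the integer $r$ are defined only after $\tau$ is chosen, and I would need to verify that the homotopies and conjugations used to reach the model paths genuinely preserve these $\tau$-relative rotation counts — equivalently, that they never push the endpoint across the degenerate locus $\{1\in\mathrm{Spec}\}$. Once that is in place, the index of each model path is a direct evaluation and the iterate formulas follow by replacing the single-period turning by its $k$-fold multiple.
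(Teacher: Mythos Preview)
The paper does not supply a proof of this proposition: it is stated with attribution to \cite{Hutchings} and used as background input, with no argument given in the present paper. So there is no ``paper's own proof'' to compare your proposal against.

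That said, your outline is the standard route to these formulas in the $SL_2(\mathbb R)$ setting and is essentially correct. The reduction to a model rotation in the elliptic case and to a path whose eigenline winds uniformly by $\pi r$ in the hyperbolic case, together with the homotopy-invariance and normalization axioms for the Conley--Zehnder index, does pin down $\mu_\tau(\gamma^k)$ as claimed. The one point worth tightening is the elliptic step: you should state explicitly that the \emph{homotopy class rel endpoints} of a path in $SL_2(\mathbb R)$ ending at a nondegenerate matrix is determined by the total rotation number (this is where $\pi_1(SL_2(\mathbb R))\simeq\mathbb Z$ enters), since that is what justifies replacing the genuine linearized flow by the model rotation $t\mapsto R_{2\pi\phi t}$. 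In the hyperbolic case your parity argument via the sign of the eigenvalue is the right mechanism, and the iterate formula then follows as you indicate.
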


\subsection{Almost complex structure}
\label{section:almcomplstr} In this section we repeat some
definitions from Section 3.1 in \cite{ColinGhigginiHondaHutchings}.
\begin{definition} Let $(M, \xi)$ be a contact manifold with a
contact form $\alpha$ such that $\xi = ker(\alpha)$. An almost
complex structure $J$ on the symplectization $\mathbb R\times M$ is
{\em $\alpha$-adapted} if $J$ is $\mathbb R$-invariant; $J(\xi)=\xi$
with $d\alpha(v,Jv) > 0$ for nonzero $v\in \xi$; and
$J(\partial_s)=R_{\alpha}$, where $s$ denotes the $\mathbb
R$-coordinate and $R_{\alpha}$ is a Reeb vector field associated to
$\alpha$.
\end{definition}
\begin{definition}
Let $(W, \beta)$ be a Liouville manifold and $\zeta$ be the contact
structure given on $\partial W$ by $ker(\beta_{0})$, where
$\beta_{0} = \beta|_{\partial W}$. In addition, let $(\widehat{W},
\widehat{\beta})$ be the completion of $(W, \beta)$, i.e.,
$\widehat{W} = W\cup ([0,\infty)\times \partial W)$ and $\widehat
{\beta}|_{[0,\infty)\times \partial W} = e^{\tau}\beta_{0}$, where
$\tau$ is the $[0,\infty)$-coordinate. An almost complex structure
$J_{0}$ on $\widehat W$ is {\em $\widehat{\beta}$- adapted} if
$J_{0}$ is $\beta_{0}$-adapted on $[0,\infty)\times \partial W$; and
$d\beta (v,J_{0}v) > 0$ for all nonzero tangent vectors $v$ on $W$.
\end{definition}
\begin{definition}
Let $(M, \Gamma, U(\Gamma), \xi)$ be a sutured contact manifold,
$\alpha$ be an adapted contact form and $(M^{\ast}, \alpha^{\ast})$
be its completion. We say that an almost complex structure $J$ on
$\mathbb R\times M^{\ast}$ is {\em tailored to} $(M^{\ast},
\alpha^{\ast})$ if the following hold:
\begin{itemize}
\item[(1)] $J$ is $\alpha^{\ast}$-adapted;
\item[(2)] $J$ is $\partial_t$-invariant in a neighborhood of $M^{\ast} \setminus int(M)$;
\item[(3)] The projection of $J$ to $T \widehat{R_{\pm}(\Gamma)}$ is a $\widehat{\beta}_{\pm}$-adapted almost complex
structure $J_{0}$ on the completion $(\widehat{R_{+}(\Gamma)},
\widehat{\beta}_{+}) \bigsqcup (\widehat{R_{-}(\Gamma)},
\widehat{\beta}_{-})$ of the Liouville manifold $(R_{+}(\Gamma),
\beta_{+}) \bigsqcup (R_{-}(\Gamma), \beta_{-})$. Moreover, the flow
of $\partial_t$ identifies $J_0|_{\widehat{R_{+}(\Gamma)}\setminus
R_{+}(\Gamma)}$ and $J_{0}|_{\widehat{R_{-}(\Gamma)}\setminus
R_{-}(\Gamma)}$.
\end{itemize}
\end{definition}

\subsection{Sutured embedded contact homology}
First, let $M$ be a closed, oriented $3$-manifold, $\alpha$ be a
contact $1$-form on $M$ and let $J$ be an $\alpha$-adapted almost
complex structure on $\mathbb R\times M$. For simplicity, we assume
that all Reeb orbits of $R_{\alpha}$, including multiply covered
ones, are nondegenerate.

\begin{definition}
An {\em orbit set} is a finite set of pairs $a=\{ (\alpha_i, m_i)
\}$, where the $\alpha_i$'s are distinct embedded orbits of
$R_{\alpha}$ and the $m_i$'s are positive integers. The orbit set
$a$ is {\em admissible} if $m_{i} = 1$ whenever $\alpha_{i}$ is
hyperbolic. The homology class of $a$ is defined by
\begin{align*}
[a]:=\sum\limits_{i} m_i[\alpha_i] \in H_1(M; \mathbb Z).
\end{align*}
If $a=\{ (\alpha_i, m_i) \}$ and $b=\{ (\beta_j, n_j) \}$ are two
orbit sets with $[a]=[b]$, let $H_2(M, a, b)$ denote the set of
relative homology classes of $2$-chains $Z$ in $M$ with
\begin{align*}
\partial Z =
\sum\limits_{i} m_i\alpha_i - \sum\limits_{j} n_i\beta_j.
\end{align*}
\end{definition}

\begin{definition}
The {\em ECH chain complex} $C_{\ast}(M,\alpha,h)$ is a free
$\mathbb Z$-module with one generator for each admissible orbit set
$a$ with $[a]=h$.
\end{definition}

\begin{definition}
If $a = \{(\alpha_{i},m_{i})\}$ and $b = \{(\beta_{j}, n_{j})\}$ are
orbit sets with $[a] = [b]$, let $\mathcal M^{J}(a,b)$ denote the
moduli space of $J$-holomorphic curves $u$ with positive ends at
covers of $\alpha_{i}$ with total multiplicity $m_{i}$, negative
ends at covers of $\beta_{j}$ with total multiplicity $n_{j}$ , and
no other ends. Note that the projection of each $u \in \mathcal
M^{J}(a,b)$ to $M$ has a well-defined relative homology class
$[u]\in H_{2}(M,a,b)$. For $Z\in H_{2}(M,a,b)$ we then define
\begin{align*}
M^{J}(a,b,Z):=\{u\in M^{J}(a,b) |\ [u] = Z\}.
\end{align*}
\end{definition}

\begin{definition}
If $a=\{ (\alpha_i, m_i) \}$ is an orbit set, define the {\em
symplectic action}
\begin{align*}
\mathcal A(a):=\sum\limits_{i} m_i \int\limits_{\alpha_i} \alpha.
\end{align*}
\end{definition}

\begin{lemma}[\cite{HutchingsSullivan}]\label{le1_symplactfech}
For an adapted almost complex structure $J$, if $M^{J}(a, b)$ is
non-empty, then:
\begin{itemize}
\item[(1)] $\mathcal A(a)\geq \mathcal A(b)$.

\item[(2)] If $\mathcal A(a)= \mathcal A(b)$, then $a=b$ and every element of $\mathcal M^{J}(a,
b)$ maps to a union of trivial cylinders.

\end{itemize}

\end{lemma}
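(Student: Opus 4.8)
The plan is to deduce both statements from one fact: for every $u\in\mathcal M^{J}(a,b)$ the $d\alpha$-energy $\int_{u}d\alpha$ is nonnegative, with equality precisely when the image of $u$ is a union of trivial cylinders. First I would record the pointwise positivity of the integrand. Along a $J$-holomorphic curve, decompose $TM=\mathbb R\langle R_{\alpha}\rangle\oplus\xi$; since $J$ is $\alpha$-adapted, $J\xi=\xi$ with $d\alpha(v,Jv)>0$ for $0\neq v\in\xi$, while $d\alpha$ annihilates $\partial_{s}$ and $R_{\alpha}$. A standard computation then shows that $u^{\ast}d\alpha$ is pointwise a nonnegative multiple of the area form induced by $J$ on the domain, vanishing at a point exactly when $\pi_{\xi}\circ du$ vanishes there, i.e.\ when the tangent plane of $u$ is spanned by $\partial_{s}$ and $R_{\alpha}$; here $\pi_{\xi}\co TM\to\xi$ is the projection.

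Next I would apply Stokes' theorem to the truncation $u_{R}$ of $u$ obtained by removing the portions of its cylindrical ends past level $R$. The curves in $\mathcal M^{J}(a,b)$ have finite Hofer energy, so as $R\to\infty$ each positive end converges exponentially to a cover of some $\alpha_{i}$, with total multiplicity over $\alpha_{i}$ equal to $m_{i}$, and each negative end converges to a cover of some $\beta_{j}$ with total multiplicity $n_{j}$; in particular $u$ has no ends other than these. Consequently $\int_{\partial u_{R}}\alpha\to\sum_{i}m_{i}\int_{\alpha_{i}}\alpha-\sum_{j}n_{j}\int_{\beta_{j}}\alpha=\mathcal A(a)-\mathcal A(b)$, and together with the pointwise positivity above this yields
\[
0\ \le\ \int_{u}d\alpha\ =\ \lim_{R\to\infty}\int_{\partial u_{R}}\alpha\ =\ \mathcal A(a)-\mathcal A(b),
\]
which is part (1).

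For part (2), the equality $\mathcal A(a)=\mathcal A(b)$ forces $\int_{u}d\alpha=0$, hence by the pointwise positivity $\pi_{\xi}\circ du\equiv 0$, so at every point the tangent plane of $u$ is spanned by $\partial_{s}$ and $R_{\alpha}$. Thus the image of each component of $u$ is an $\mathbb R$-invariant cylinder $\mathbb R\times\gamma$ over an embedded Reeb orbit $\gamma$, i.e.\ a trivial cylinder, and such a cylinder has the same asymptotic orbit with the same multiplicity at its positive and negative ends; summing over components shows the orbit set at the positive ends of $u$ equals that at the negative ends, i.e.\ $a=b$. I expect the only genuine obstacle to be the justification of this limiting Stokes argument -- controlling $\int_{\partial u_{R}}\alpha$ via the exponential decay at finite-energy ends and verifying that $u$ has no ends besides those over the $\alpha_{i}$ and $\beta_{j}$; in the sutured adaptation this last point is exactly Remark~\ref{reeborbincompl}, which ensures that every periodic Reeb orbit, and hence every end of $u$, stays in $M$. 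Everything else is the standard energy/positivity dichotomy for symplectizations.
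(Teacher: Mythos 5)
Your argument is the standard energy/positivity proof: $u^{\ast}d\alpha\ge 0$ pointwise for an adapted $J$, Stokes' theorem identifies $\int_{u}d\alpha$ with $\mathcal A(a)-\mathcal A(b)$, and vanishing of the $d\alpha$-energy forces each component to map onto a trivial cylinder, whence $a=b$. This is correct and is exactly the argument behind the cited result of Hutchings--Sullivan; the paper itself offers no proof, simply quoting the lemma (and noting in a remark that it persists for tailored $J$ on completions of sutured manifolds, the point you address at the end), so there is nothing further to compare.
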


\begin{definition}
If $u\in \mathcal M^{J}(a, b, Z)$, define the {\em ECH index}
\begin{align*}
I(u)=I(a,b,Z)=c_1(\xi|_{Z},\tau)+Q_{\tau}(Z)+\sum\limits_{i}\sum\limits_{k=1}^{m_i}\mu_{\tau}(\alpha_i^k)-
\sum\limits_{j}\sum\limits_{k=1}^{n_j}\mu_{\tau}(\beta_j^k).
\end{align*}
\end{definition}
Here $Q_{\tau}(Z)$ denotes the relative intersection pairing, which
is defined in \cite{Hutchings}.

Any $J$-holomorphic curve $u\in M^{J} (a, b)$ can be uniquely
written as $u = u_{0}\cup u_{1}$, where $u_{0}$ and $u_{1}$ are
unions of components of $u$, each component of $u_{0}$ maps to an
$\mathbb R$-invariant cylinder, and no component of $u_{1}$ does.

\begin{proposition}[\cite{HutchingsTaubes}]\label{pr1_echind}
Suppose that $J$ is generic and $u = u_{0}\cup u_{1}\in \mathcal
M^{J} (a,b)$. Then:

\begin{itemize}

\item[(1)] $I(u)\geq 0$ with equality if and only if $u=u_{0}$.

\item[(2)] If $I(u)=1$, then $u$ contains one embedded component $u_{1}$ with
$ind(u_{1})=I(u_{1})=1$ and $u_{0}$ does not intersect $u_{1}$.
\end{itemize}

\end{proposition}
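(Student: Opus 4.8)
\section*{Proof proposal for Proposition \ref{pr1_echind}}

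The plan is to deduce both statements from the \emph{ECH index inequality}, which for a somewhere injective curve compares the ECH index $I$ with the Fredholm index $ind$, and then to analyze the decomposition $u=u_0\cup u_1$ using positivity of intersections. Three index-theoretic formulas drive everything. First, the Fredholm index of a curve $C$ representing $Z\in H_2(M,a,b)$ is
\[
ind(C)=-\chi(C)+2c_1(\xi|_Z,\tau)+\mathrm{CZ}_{\mathrm{ind}}(C),
\]
where $\mathrm{CZ}_{\mathrm{ind}}(C)$ is the signed sum of $\mu_\tau$ over the \emph{actual} asymptotic ends of $C$ (positive ends minus negative ends). Second, the ECH index from the definition uses the Conley--Zehnder sum $\mathrm{CZ}_I(C)=\sum_i\sum_{k=1}^{m_i}\mu_\tau(\alpha_i^k)-\sum_j\sum_{k=1}^{n_j}\mu_\tau(\beta_j^k)$ over \emph{all} iterates. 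Third, Hutchings's relative adjunction formula expresses $c_1(\xi|_Z,\tau)=\chi(C)+Q_\tau(Z)+w_\tau(C)-2\delta(C)$ in terms of the writhe $w_\tau(C)$ of the braids traced out by the ends and the singularity count $\delta(C)\ge 0$, which vanishes precisely when $C$ is embedded.

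First I would substitute the adjunction formula into $I(C)-ind(C)$ and cancel the $c_1$, $\chi$ and $Q_\tau$ contributions. After this the difference collapses to a purely topological comparison,
\[
I(C)-ind(C)-2\delta(C)=\mathrm{CZ}_I(C)-\mathrm{CZ}_{\mathrm{ind}}(C)-w_\tau(C),
\]
so the whole inequality is reduced to bounding the writhe against the difference of the two Conley--Zehnder sums. This is the step I expect to be the main obstacle: one must control the writhe $w_\tau(C)$ of the braid that each end of $C$ traces out near its asymptotic orbit, which rests on the asymptotic analysis of pseudoholomorphic ends (the ends converge to covers of Reeb orbits with controlled winding numbers). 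From this one extracts $w_\tau(C)\le \mathrm{CZ}_I(C)-\mathrm{CZ}_{\mathrm{ind}}(C)$, with equality exactly when the multiplicities of the ends realize the combinatorial \emph{partition conditions}. Granting this writhe bound yields the index inequality $ind(C)\le I(C)-2\delta(C)$ for every somewhere injective $C$.

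With the inequality in hand I would treat the full curve $u=u_0\cup u_1$. For a union one has the additivity $I(u_0\cup u_1)=I(u_0)+I(u_1)+2\,(u_0\cdot u_1)$, where $u_0\cdot u_1\ge 0$ by positivity of intersections; since each component of $u_0$ is an $\mathbb{R}$-invariant cylinder its top and bottom Conley--Zehnder sums cancel and $c_1=Q_\tau=0$ on the trivial class, so $I(u_0)=0$ and hence $I(u)\ge I(u_1)$. Decomposing $u_1$ into its underlying somewhere injective curves, applying the index inequality to each, and using genericity of $J$ (which makes these moduli spaces transverse, so that a non-$\mathbb{R}$-invariant somewhere injective curve has $ind\ge 1$ because of the free translation action) gives
\[
I(u)\ge I(u_1)\ge ind(u_1)+2\delta(u_1)\ge 1>0
\]
whenever $u_1\neq\emptyset$. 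This proves (1): $I(u)\ge 0$, and equality forces $I(u_1)=0$, hence $u_1=\emptyset$ and $u=u_0$.

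For (2), assume $I(u)=1$. Then $u_1\neq\emptyset$, so $I(u_1)\ge 1$, and the identity $1=I(u_1)+2\,(u_0\cdot u_1)$ forces $u_0\cdot u_1=0$ (so $u_0$ does not intersect $u_1$) and $I(u_1)=1$. The chain $1=I(u_1)\ge ind(u_1)+2\delta(u_1)\ge 1$ must then be an equality throughout, giving $\delta(u_1)=0$ (so $u_1$ is embedded, hence somewhere injective) and $ind(u_1)=1$. Finally, since $ind$ is additive over components and each nontrivial somewhere injective component contributes at least $1$ for generic $J$, the value $ind(u_1)=1$ forces $u_1$ to be a single embedded component with $ind(u_1)=I(u_1)=1$. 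I would close by noting that the equality case of the writhe bound is exactly what certifies the partition conditions accompanying this minimal configuration, completing the argument.
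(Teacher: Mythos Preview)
The paper does not give a proof of this proposition: it is quoted as a black box from \cite{HutchingsTaubes} (and ultimately \cite{Hutchings}), so there is no in-paper argument to compare against. Your outline is the standard Hutchings route---relative adjunction plus the writhe bound yielding $ind(C)\le I(C)-2\delta(C)$ for somewhere injective $C$, then additivity $I(u_0\cup u_1)=I(u_0)+I(u_1)+2(u_0\cdot u_1)$ together with genericity---and is the right strategy.

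One genuine gap to flag: in part~(2) you pass from $\delta(u_1)=0$ to ``$u_1$ is embedded'', but $\delta$ and the index inequality are only set up for somewhere injective curves, and at that stage you have not excluded that a component of $u_1$ is a nontrivial multiple cover. The actual argument first rules out multiply covered components of $u_1$ (using that a $d$-fold cover of a non-$\mathbb R$-invariant somewhere injective curve $C'$ contributes at least $d\cdot I(C')\ge d$ to the ECH index, or equivalently the refined inequality $I(u)\ge \sum_i d_i\, ind(C_i)+2\delta(u)$ for the underlying simple curves $C_i$ with multiplicities $d_i$), and only then applies $\delta=0$ to conclude embeddedness. Without this step your chain ``$1=I(u_1)\ge ind(u_1)+2\delta(u_1)$'' is not yet justified.
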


To fix the signs in the differential, fix some ordering of all the
embedded positive hyperbolic Reeb orbits in $M$.

Two curves $u$ and $u'$ in $\mathcal M^{J}(a,b,Z)/ \mathbb R$ are
equivalent if their embedded components $u_1$ and $u'_1$ are the
same up to translation, and if their other components cover each
embedded trivial cylinder $\mathbb R\times \gamma$ with the same
total multiplicity. The differential $\partial$ in $ECH$ counts $I =
1$ curves in $M^{J} (a, b)/\mathbb R$ where $a$ and $b$ are
admissible orbit sets. Such curves may contain multiple covers of
the $\mathbb R$-invariant cylinder $\mathbb R\times \gamma$ when
$\gamma$ is an elliptic embedded Reeb orbit. The differential
$\partial$ only keeps track of the total multiplicity of such
coverings for each $\gamma$. Finiteness of the count results from
the ECH compactness theorem \cite[Lemma 9.8]{Hutchings}. For the
sign of the count we refer to \cite{HutchingsTaubes2}.

Let $(M, \Gamma, U(\Gamma), \xi)$ be a sutured contact $3$-manifold
with an adapted contact form $\alpha$, $(M^{\ast}, \alpha^{\ast})$
be its completion and $J$ be an almost complex structure on $\mathbb
R\times M^{\ast}$ which is tailored to $(M^{\ast}, \alpha^{\ast})$.

The sutured embedded contact homology group $ECH(M, \Gamma, \alpha,
J)$ is defined to be the embedded contact homology of $(M^{\ast},
\alpha^{\ast}, J)$.

The following theorems have been proven by Colin, Ghiggini, Honda
and Hutchings in~\cite{ColinGhigginiHondaHutchings}:

\begin{theorem}[\cite{ColinGhigginiHondaHutchings}]
The ECH compactness theorem \cite[Lemma 9.8]{Hutchings} holds for
$J$-holomorphic curves in the symplectization of the completion of a
sutured contact $3$-manifold, provided that we choose the almost
complex structure $J$ on $\mathbb R \times M^{\ast}$ to be tailored
to $(M^{\ast}, \alpha^{\ast})$.
\end{theorem}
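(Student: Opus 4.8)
The plan is to reduce the statement to the ECH compactness theorem in the closed case, \cite[Lemma 9.8]{Hutchings}, by establishing an a priori confinement: every $J$-holomorphic curve counted by the theory has image contained in the compact piece $\mathbb R\times M\subset\mathbb R\times M^{\ast}$. Recall from Remark~\ref{reeborbincompl} that every periodic orbit of $R_{\alpha^{\ast}}$ lies in $M$; hence for any $u\in\mathcal M^{J}(a,b)$ the positive and negative asymptotic orbits sit inside the compact region $\{\tau\le 0\}\cap\{|t|\le 1\}$, and near each puncture $u$ converges to a trivial cylinder over such an orbit. The only way the closed-case argument could fail is if a sequence of curves escaped into one of the three noncompact ends T, B, S, and ruling this out is the heart of the matter.

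First I would prove the confinement lemma end by end, using that $J$ is tailored. On the Top end $[1,\infty)\times R_{+}(\Gamma)$, and symmetrically the Bottom end, conditions (1)--(2) of ``tailored'' give $\alpha^{\ast}=C\,dt+\beta_{+}$, so $R_{\alpha^{\ast}}=C^{-1}\partial_t$ and $J\partial_s=C^{-1}\partial_t$, $J\partial_t=-C\partial_s$, while $J$ preserves $T\widehat{R_{+}(\Gamma)}$. Writing $u$ in a local domain coordinate and separating the $(s,t)$-components from the $T\widehat{R_{+}(\Gamma)}$-component, the Cauchy--Riemann equations force $t\circ u$ to be harmonic there. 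On the Side end the analogous computation, using condition (3) together with $J_{0}\partial_{\tau}=R_{\beta_{0}}$ and $J\partial_s=C^{-1}\partial_t$, shows that $\tau\circ u$ is harmonic. I would then apply the maximum principle: the set $u^{-1}(\{t>1\})$, respectively $u^{-1}(\{\tau>0\})$, is open, and its image contains no punctures since there are no Reeb orbits in the ends, so its closure avoids neighborhoods of all punctures and is compact in the domain; a nonconstant harmonic function would attain its maximum on the boundary where $t=1$, respectively $\tau=0$, contradicting $t>1$, respectively $\tau>0$, in the interior. Hence these sets are empty and $\operatorname{im}(u)\subset\mathbb R\times M$.

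With confinement in hand the remaining steps follow the closed theory. The symplectic action of the asymptotic orbit sets is determined by $a$ and $b$, and is monotone along holomorphic curves by Lemma~\ref{le1_symplactfech}, so fixing $a$ and $b$, equivalently fixing the homology class and bounding the ECH index, bounds the Hofer energy uniformly over any sequence; the curves then all live in the symplectization of the \emph{compact} manifold $M$. At this point the SFT/ECH compactness argument of \cite[Lemma 9.8]{Hutchings} applies, producing a subsequence converging to a broken $J$-holomorphic building, each of whose levels is again confined to $\mathbb R\times M$ by the same maximum-principle argument. I would finally check that the limiting configurations are of the type required by the theory, with multiply covered $\mathbb R$-invariant cylinders over elliptic orbits together with embedded pieces, which is unaffected by the passage from $M$ to $M^{\ast}$.

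The main obstacle I expect is the confinement lemma, and within it the behavior at the corners where the Side end meets the Top and Bottom ends, i.e.\ near the completion of $R_{\pm}(\Gamma)$ into its own Liouville end. There one must verify that the tailoring conditions glue the two harmonicity computations consistently, so that a single exhausting function controls the curve across the transition region; the $\partial_t$-invariance in condition~(2) and the identification of $J_{0}|_{\widehat{R_{+}(\Gamma)}\setminus R_{+}(\Gamma)}$ with $J_{0}|_{\widehat{R_{-}(\Gamma)}\setminus R_{-}(\Gamma)}$ in condition~(3) are precisely what make this work. A secondary technical point is ensuring that the energy bounds and asymptotic convergence used in Gromov compactness remain valid uniformly on the noncompact target, which again reduces to the confinement.
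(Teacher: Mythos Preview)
The paper does not prove this theorem: it is quoted verbatim as a result of Colin--Ghiggini--Honda--Hutchings and attributed to \cite{ColinGhigginiHondaHutchings}, with no argument given here. So there is no ``paper's own proof'' to compare against; the statement functions in this paper purely as a black box.

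That said, your sketch is essentially the strategy carried out in \cite{ColinGhigginiHondaHutchings}: prove a confinement lemma via maximum-principle arguments for the coordinate functions on the three ends (Top, Bottom, Side), conclude that every relevant $J$-holomorphic curve has image in $\mathbb R\times M$, and then invoke the closed-case ECH compactness. Your identification of the corner region and the role of the tailoring conditions (2) and (3) is also on target. One caution: the bald claim that ``$t\circ u$ is harmonic'' on the Top/Bottom ends is a slight oversimplification; what one actually shows is that the relevant coordinate is subharmonic (or that a suitable combination of $s$ and $t$ is holomorphic after projecting out the $\widehat{R_{\pm}(\Gamma)}$-component), which still yields the maximum principle you need. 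The overall architecture of your argument matches the cited reference.
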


\begin{theorem}[\cite{ColinGhigginiHondaHutchings}]
Let $(M, \Gamma, U(\Gamma), \xi)$ be a sutured contact $3$-manifold
with an adapted contact form $\alpha$, $(M^{\ast}, \alpha^{\ast})$
be its completion and $J$ be an almost complex structure on $\mathbb
R\times M^{\ast}$ which is tailored to $(M^{\ast}, \alpha^{\ast})$.
Then the embedded contact homology group $ECH(M, \Gamma, \alpha, J)$
is defined.
\end{theorem}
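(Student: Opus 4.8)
The plan is to show that the embedded contact homology chain complex for the completion $(M^{\ast},\alpha^{\ast},J)$ is well-defined, meaning that the differential $\partial$ is a finite count of $I=1$ curves and satisfies $\partial^2=0$, so that the homology $ECH(M,\Gamma,\alpha,J)$ makes sense. The inputs are already in place: Remark~\ref{reeborbincompl} tells us that every periodic Reeb orbit of $R_{\alpha^{\ast}}$ lies inside the compact piece $M$, so the set of generators (admissible orbit sets) is exactly that of $R_\alpha$; the nondegeneracy assumption guarantees these generators are isolated and that the usual Conley--Zehnder indices and ECH index $I$ are defined via Proposition in Section~\ref{section:reeborb_CZ} and the ECH index definition. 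The preceding theorem cited from \cite{ColinGhigginiHondaHutchings} supplies the crucial compactness statement, namely that \cite[Lemma 9.8]{Hutchings} persists for $J$ tailored to $(M^{\ast},\alpha^{\ast})$. So the proof is essentially a matter of running the standard ECH construction and invoking this compactness input at the one place where noncompactness of $M^{\ast}$ could cause trouble.

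First I would fix the data: choose $\alpha$ adapted and $J$ tailored as in Section~\ref{section:almcomplstr}, and assume (as stated) that all Reeb orbits including multiple covers are nondegenerate. I would then define the chain groups $C_{\ast}(M^{\ast},\alpha^{\ast},h)$ as the free $\mathbb Z$-module on admissible orbit sets in homology class $h$, and define $\partial$ by counting points of $\mathcal M^{J}(a,b,Z)/\mathbb R$ with $I=1$, using the sign conventions from \cite{HutchingsTaubes2} after fixing an ordering of the embedded positive hyperbolic orbits. By Proposition~\ref{pr1_echind}, for generic tailored $J$ an $I=1$ curve consists of one embedded index-one component together with trivial cylinders, so the count is the honest ECH differential. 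The main point to verify is that $\partial$ is well-defined, i.e.\ that the relevant moduli spaces of $I=1$ curves are compact zero-dimensional manifolds; this is exactly where the noncompactness of the target $M^{\ast}$ enters.

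The hard part is controlling holomorphic curves near the noncompact ends (T), (B) and (S) of $M^{\ast}$, so that no sequence of $I=1$ curves escapes to infinity and the count remains finite. Here I would argue that the tailored conditions ($J$ is $\partial_t$-invariant near $M^{\ast}\setminus\operatorname{int}(M)$, and its projection to $T\widehat{R_{\pm}(\Gamma)}$ is $\widehat\beta_{\pm}$-adapted) together with a maximum-principle/monotonicity argument confine curves to the compact region: since all Reeb orbits lie in $M$ by Remark~\ref{reeborbincompl}, the asymptotics are already interior, and the convexity built into the completion prevents curves from reaching the symplectization ends. This is precisely the content of the compactness theorem quoted from \cite{ColinGhigginiHondaHutchings}, which says \cite[Lemma 9.8]{Hutchings} holds for tailored $J$. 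Invoking it gives finiteness of the $I=1$ count, hence $\partial$ is defined.

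Finally I would establish $\partial^2=0$ by the standard ECH gluing-and-degeneration argument: the boundary of the compactified one-dimensional moduli space of $I=2$ curves decomposes into broken configurations of two $I=1$ curves, and the signed count of these breakings is $\partial^2$. The same tailored compactness theorem guarantees that these $I=2$ moduli spaces have the expected compactifications with no contributions from the ends, so the boundary-counting identity of \cite{HutchingsTaubes,HutchingsTaubes2} applies verbatim and yields $\partial\circ\partial=0$. Therefore $H_{\ast}(C_{\ast}(M^{\ast},\alpha^{\ast},h),\partial)$ is defined, and by definition $ECH(M,\Gamma,\alpha,J):=\bigoplus_h H_{\ast}(C_{\ast}(M^{\ast},\alpha^{\ast},h),\partial)$ is the embedded contact homology of the completion, as claimed. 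I expect the genuinely new work to be entirely absorbed into the cited compactness theorem; the rest is a faithful transcription of the closed-manifold ECH formalism to the completed sutured setting.
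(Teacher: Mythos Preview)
The paper does not prove this theorem; it is quoted as a result of Colin--Ghiggini--Honda--Hutchings \cite{ColinGhigginiHondaHutchings} and stated without proof. There is therefore no in-paper argument to compare your proposal against.

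That said, your outline is a faithful sketch of how the result is established in \cite{ColinGhigginiHondaHutchings}: one transports the closed-manifold ECH formalism to the completion $(M^{\ast},\alpha^{\ast})$, using that all Reeb orbits sit in the compact piece $M$ (Remark~\ref{reeborbincompl}) to get the same generators, and then the only genuinely new issue---that $J$-holomorphic curves might escape into the noncompact ends (T), (B), (S)---is handled by the tailored-$J$ compactness theorem quoted just before the statement. Once that compactness is in hand, finiteness of the $I=1$ count and $\partial^{2}=0$ follow from the standard ECH machinery \cite{Hutchings,HutchingsTaubes,HutchingsTaubes2} exactly as you describe. Your expectation that ``the genuinely new work is entirely absorbed into the cited compactness theorem'' is correct and matches how the paper organizes the logic.
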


\begin{remark}
Lemma~\ref{le1_symplactfech} and Proposition~\ref{pr1_echind} hold
for $J$-holomorphic curves in the symplectization of the completion
of a sutured contact manifold, provided that we choose the almost
complex structure $J$ on $\mathbb R \times M^{\ast}$ to be tailored
to $(M^{\ast}, \alpha^{\ast})$.
\end{remark}

Recall that embedded contact homology is an invariant of the
underlying closed, oriented $3$-manifold. Hence, it is natural to
expect the following:

\begin{conjecture}[\cite{ColinGhigginiHondaHutchings}]
The embedded contact homology group $ECH(M,\Gamma, \alpha, J)$ does
not depend on the choice of contact form $\alpha$, contact structure
$\xi = ker(\alpha)$, and almost complex structure $J$.
\end{conjecture}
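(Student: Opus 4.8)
The plan is to deduce topological invariance by reducing to a gauge-theoretic invariant, mirroring the strategy by which invariance is known in the closed case. Even after fixing $\xi$, the ECH differential is not defined by any naive continuation map, and in the closed setting its invariance is established only through Taubes's identification of $ECH$ with Seiberg--Witten Floer cohomology \cite{Taubes1, Taubes2}. I therefore organize the argument in two stages: first invariance under the choice of $J$ and of the contact form $\alpha$ for a \emph{fixed} contact structure $\xi$, and then the deeper invariance under deformation of $\xi$ itself. The final conclusion---that $ECH(M,\Gamma,\alpha,J)$ depends only on the sutured manifold $(M,\Gamma)$---I would extract from an isomorphism with the sutured monopole Floer homology $SHM(M,\Gamma)$ of Kronheimer and Mrowka \cite{KronheimerMrowka}, which is manifestly independent of all contact data.

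For the first stage I would construct continuation/cobordism maps on the completion $(M^{\ast},\alpha^{\ast})$. Given two tailored pairs $(\alpha_0,J_0)$ and $(\alpha_1,J_1)$ with the same $\xi$, I interpolate through an exact symplectic cobordism supported away from the ends and define the induced chain map by counting the resulting $I=0$ holomorphic curves. The key analytic input is already available: the ECH compactness theorem for the symplectization of a completed sutured contact manifold, proved in \cite{ColinGhigginiHondaHutchings} for tailored $J$, together with Remark~\ref{reeborbincompl}, which confines all Reeb orbits to the compact core $M$. The product structure $\alpha=Cdt+\beta_\pm$ on the Top, Bottom and Side ends, and the $\partial_t$-invariance built into the tailored condition, force the cobordism curves to remain in a fixed compact region, so the standard Floer-theoretic proof that these maps are chain homotopy equivalences applies with only cosmetic changes.

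The genuinely new phenomenon is independence of $\xi$, and here I do not expect a purely holomorphic-curve argument to suffice; the topological invariance must come from the gauge-theoretic side. The plan is to establish the sutured analogue of Taubes's theorem: for a tailored $(\alpha^{\ast},J)$ on $M^{\ast}$, identify the ECH chain complex with the Seiberg--Witten complex computing $SHM(M,\Gamma)$. Concretely, one closes up $(M,\Gamma)$ to a closed manifold in the sense of \cite{KronheimerMrowka}, applies Taubes's isomorphism \cite{Taubes1, Taubes2} on the closure, and then shows that the sutured completed picture computes exactly the distinguished summand carved out by the suture. Because $SHM(M,\Gamma)$ is defined without reference to $\alpha$, $\xi$ or $J$, the resulting isomorphism immediately yields the full invariance claimed in the conjecture, and is consistent with the expected agreement with $SFH(-M,-\Gamma)$ recorded in Conjecture~\ref{conj_echsfh}.

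The main obstacle is the analytic heart of this Taubes-type isomorphism in the noncompact, completed geometry. One must show that the sutured boundary condition and the tailored almost complex structure translate into admissible asymptotic conditions for the Seiberg--Witten equations on $M^{\ast}$---in particular that no solutions escape into the cylindrical ends---so that the solution count is finite and matches the count of ECH generators and $I=1$ curves. Controlling this compactness and the matching of the differentials, rather than the comparatively routine continuation-map arguments of the first stage, is where essentially all of the difficulty lies, and is precisely why the statement remains a conjecture rather than a theorem.
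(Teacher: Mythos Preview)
The statement you are ``proving'' is labeled a \emph{Conjecture} in the paper, and the paper gives no proof of it; it is simply quoted from \cite{ColinGhigginiHondaHutchings} as an expected property. So there is no proof in the paper to compare your proposal against, and your own text is explicitly a strategy outline rather than a proof---you say as much in your last sentence.

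That said, one point in your outline is more optimistic than the current state of the subject warrants. In your ``first stage'' you propose to define continuation maps between $ECH(\alpha_0,J_0)$ and $ECH(\alpha_1,J_1)$ for fixed $\xi$ by directly counting $I=0$ holomorphic curves in an exact cobordism, and you describe this as requiring ``only cosmetic changes'' to standard Floer arguments. This is exactly the step that is \emph{not} known to work for ECH: multiply covered curves with negative ECH index can appear in cobordisms, so a naive $I=0$ count does not give a chain map, and there is no known direct holomorphic-curve definition of ECH cobordism maps. In the closed case these maps are constructed only by passing through Seiberg--Witten theory (Hutchings--Taubes), which is precisely the mechanism you reserve for your second stage. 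So your two stages are not really separable: even the $J$- and $\alpha$-independence for fixed $\xi$ already requires the gauge-theoretic detour, and the difficulty you locate entirely in stage two is present from the outset.
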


\subsection{Sutured contact homology}
\label{section:sutcylconthom}

Let $(M, \Gamma, U(\Gamma), \xi)$ be a sutured contact manifold with
an adapted contact form $\alpha$, $(M^{\ast}, \alpha^{\ast})$ be its
completion and $J$ be an almost complex structure on $\mathbb
R\times M^{\ast}$ which is tailored to $(M^{\ast}, \alpha^{\ast})$.
For simplicity, we assume that all Reeb orbits of $R_{\alpha}$,
including multiply covered ones, are nondegenerate.

Let $\gamma$ be an embedded Reeb orbit. We are also interested in
the multiple covers $\gamma^{m}$ of $\gamma$, $m\geq 2$. There are 2
ways the Conley-Zehnder index of $\gamma^m$ can behave :

\begin{itemize}

\item[(1)] the parity of $\mu_{\tau}(\gamma^m)$ is the same for all $m \geq 1$.

\item[(2)] the parity for the even multiples $\mu_{\tau}(\gamma^{2k})$, $k \geq 1$,
disagrees with the parity for the odd multiples
$\mu_{\tau}(\gamma^{2k-1})$, $k\geq 1$.

\end{itemize}
In the second case, the even multiples $\gamma^{2k}$, $k \geq 1$,
are called {\em bad orbits}. An orbit that is not bad is called {\em
good}.

The {\em sutured contact homology algebra} $HC(M, \Gamma, \alpha,
J)$ is defined to be the contact homology of $(M^{\ast},
\alpha^{\ast}, J)$ in the following sense: The contact homology
chain complex $\mathcal A(\alpha, J)$ is the free supercommutative
$\mathbb Q$-algebra with unit generated by good Reeb orbits, where
the grading and the boundary map $\partial$ are defined in the usual
way (as in~\cite{EliashbergGiventalHofer}) with respect to the
$\alpha^{\ast}$-adapted almost complex structure $J$. The homology
of $ \mathcal A(\alpha, J)$ is the sutured contact homology algebra
$HC(M,\Gamma, \alpha, J)$.

We define the {\em sutured cylindrical contact homology group}
$HC^{cyl}(M, \Gamma, \alpha, J)$ to be the cylindrical contact
homology of $(M^{\ast}, \alpha^{\ast}, J)$. The cylindrical contact
homology chain complex $C(\alpha, J)$ is the $\mathbb Q$-module
freely generated by all good Reeb orbits, where the grading and the
boundary map $\partial$ are defined as in~\cite{Bourgeois} with
respect to the $\alpha^{\ast}$-adapted almost complex structure $J$.
The homology of $C(\alpha, J)$ is the sutured cylindrical contact
homology group $HC^{cyl}(M,\Gamma, \alpha, J)$.

For our calculations we will need the following fact which is a
consequence of Lemma~5.4 in
\cite{BourgeoisEliashbergHoferWysockiZehnder}:
\begin{fact}[\cite{BourgeoisEliashbergHoferWysockiZehnder}]\label{energyconthom}
Let $(M,\alpha)$ be a closed, oriented contact manifold with
nondegenerate Reeb orbits and
\begin{align*}
u=(a,f)\co (\dot{S},j)\to (\mathbb R\times M, J)
\end{align*} be a $J$-holomorphic curve in
$\mathcal M^{J}(\gamma; \gamma_{1},\dots,\gamma_{s})$, where
$\gamma$ and $\gamma_{i}$'s are all good Reeb orbits, $J$ is an
$\alpha$-adapted almost complex structure on $\mathbb R\times M$ and
$M^{J}(\gamma; \gamma_{1},\dots,\gamma_{s})$ is a moduli space of
$J$-holomorphic curves that we consider in contact homology. Then
\begin{align*}
\mathcal A(u)= \mathcal A(\gamma) - \sum\limits_{i=1}^{s} \mathcal
A(\gamma_{i}) = \int\limits_{\gamma}\alpha
-\sum\limits_{i=1}^{s}\int\limits_{\gamma_{i}}\alpha\geq 0
\end{align*}
with equality if and only if the image of $f$ is contained in a
trajectory of $R_{\alpha}$, i.e.,  $u$ maps to a trivial cylinder
over $\tilde{\gamma}$, where $\tilde{\gamma}$ is an embedded orbit
of $R_{\alpha}$, and hence $\gamma=\tilde{\gamma}^{k}$ for some $k$
and $\gamma_{i}=\tilde{\gamma}^{k_{i}}$ with $\sum_{i=1}^{s}
k_{i}=k$.
\end{fact}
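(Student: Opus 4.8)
The plan is to deduce the statement from the elementary pointwise positivity of $f^{\ast}d\alpha$ for a $J$-holomorphic curve together with Stokes' theorem, exactly as in Lemma~5.4 of \cite{BourgeoisEliashbergHoferWysockiZehnder}; the particular moduli space $\mathcal M^{J}(\gamma;\gamma_1,\dots,\gamma_s)$ plays no role beyond prescribing the asymptotics. First I would establish $f^{\ast}d\alpha\geq 0$ on $\dot{S}$. Fix $z\in\dot{S}$ and $0\neq e\in T_z\dot{S}$, and let $\pi_\xi\co TM\to\xi$ be the projection along $R_\alpha$. Splitting $du=(da,df)$ into its $\partial_s$-, $R_\alpha$- and $\xi$-components and using that $J$ is $\alpha$-adapted (so $J\partial_s=R_\alpha$ and $J$ preserves $\xi$), the equation $du\circ j=J\circ du$ forces $\pi_\xi\,df(je)=J\bigl(\pi_\xi\,df(e)\bigr)$. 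Since $d\alpha(R_\alpha,\cdot)=0$, it follows that
\begin{align*}
f^{\ast}d\alpha(e,je)=d\alpha\bigl(\pi_\xi\,df(e),\,J\,\pi_\xi\,df(e)\bigr)\geq 0,
\end{align*}
with equality if and only if $\pi_\xi\,df(e)=0$; this uses precisely the condition $d\alpha(v,Jv)>0$ for $0\neq v\in\xi$.

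Next I would integrate. Remove small half-cylinder neighbourhoods of the punctures from $\dot{S}$, apply Stokes' theorem to $d(f^{\ast}\alpha)=f^{\ast}d\alpha$ on the resulting compact surface with boundary, and let the neighbourhoods shrink. Using the nondegeneracy of all the orbits involved and the exponential convergence of the ends of $u$ to the trivial cylinders over $\gamma$ and over the $\gamma_i$, the circle integral of $f^{\ast}\alpha$ around the positive puncture converges to $\int_{\gamma}\alpha=\mathcal A(\gamma)$, the circle integral around the $i$-th negative puncture converges to $\int_{\gamma_i}\alpha=\mathcal A(\gamma_i)$ (the $da$-contributions on these shrinking loops go to $0$), and in the limit one obtains
\begin{align*}
\mathcal A(u)=\mathcal A(\gamma)-\sum_{i=1}^{s}\mathcal A(\gamma_i)=\int_{\dot{S}}f^{\ast}d\alpha\geq 0.
\end{align*}
I expect the only genuine work here to be the asymptotic estimates at the punctures that justify passing to this limit; these are standard and are carried out in \cite{BourgeoisEliashbergHoferWysockiZehnder, ColinGhigginiHondaHutchings}, so I would cite them rather than reprove them.

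Finally, for the equality case: if $\mathcal A(u)=0$ then $\int_{\dot{S}}f^{\ast}d\alpha=0$, hence $f^{\ast}d\alpha\equiv 0$, and by the pointwise computation $\pi_\xi\,df\equiv 0$; therefore $du$ takes values everywhere in the rank-$2$ distribution $\mathbb R\,\partial_s\oplus\mathbb R\,R_\alpha$ on $\mathbb R\times M$. This distribution is involutive because $[\partial_s,R_\alpha]=0$, and its integral leaves are exactly the cylinders $\mathbb R\times\delta$ over trajectories $\delta$ of $R_\alpha$; since $\dot{S}$ is connected, $u(\dot{S})$ lies in a single such leaf. As $u$ is asymptotic to closed Reeb orbits at its punctures, that trajectory is a closed embedded Reeb orbit $\tilde{\gamma}$, so $u$ maps to the trivial cylinder $\mathbb R\times\tilde{\gamma}$ (as a branched cover of it). Comparing winding numbers of the ends then gives $\gamma=\tilde{\gamma}^{\,k}$ and $\gamma_i=\tilde{\gamma}^{\,k_i}$ with $\sum_{i=1}^{s}k_i=k$; the converse implication, that a trivial cylinder has $\int f^{\ast}d\alpha=0$, is immediate. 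This completes the plan.
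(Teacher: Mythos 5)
Your proof is correct, and it is exactly the standard energy argument (pointwise positivity of $f^{\ast}d\alpha$ for an adapted $J$, Stokes' theorem with the asymptotic analysis at the punctures, and the integrability of $\mathbb R\,\partial_s\oplus\mathbb R\,R_{\alpha}$ for the equality case) that underlies Lemma~5.4 of \cite{BourgeoisEliashbergHoferWysockiZehnder}, which the paper simply cites for this Fact without reproving it. The only cosmetic slip is the parenthetical about ``$da$-contributions'' on the shrinking boundary loops: the boundary integrand is $f^{\ast}\alpha$, which has no $da$-term to begin with, so nothing needs to be estimated away there.
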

In addition, we recall the following fact proven by Eliashberg and
Hofer:
\begin{fact}[\cite{Bourgeois}]\label{welldefcylconthom}
Let $(M,\alpha)$ be a closed, oriented contact manifold with
nondegenerate Reeb orbits. Let $C_{i}^{h}(M,\alpha)$ be the
cylindrical contact homology complex, where $h$ is a homotopy class
of Reeb orbits and $i$ corresponds to the Conley-Zehnder grading. If
there are no contractible Reeb orbits, then for every free homotopy
class $h$
\begin{itemize}

\item[($1$)] $\partial^2=0$;

\item[($2$)] $H(C^{h}_{\ast}(M,\alpha),\partial)$ is independent of the contact form $\alpha$ for $\xi$,
the almost complex structure $J$ and the choice of perturbation for
the moduli spaces.
\end{itemize}
\end{fact}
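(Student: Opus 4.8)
The plan is to prove both assertions by analysing the compactifications of the low-dimensional moduli spaces of holomorphic cylinders, and to show that the hypothesis — no contractible Reeb orbits — removes the only obstruction, namely plane bubbling, to the standard Floer-theoretic arguments. Throughout I fix a nontrivial free homotopy class $h$, so that every good Reeb orbit entering the complex $C^{h}_{\ast}(M,\alpha)$ represents $h$ and is in particular noncontractible. First I would address transversality. Since the differential counts cylinders, some of which may be multiply covered, a generic $\alpha$-adapted $J$ need not be regular; one therefore fixes an abstract (or domain-dependent) perturbation of the Cauchy--Riemann equation, as in \cite{Bourgeois}, so that the index-one moduli space $\mathcal M^{J}(\gamma_{+};\gamma_{-})/\mathbb R$ is a compact, signed, finite set and the index-two moduli space $\mathcal M^{J}(\gamma_{+};\gamma_{-})/\mathbb R$ is a smooth $1$-manifold; here the grading is the Conley--Zehnder index $\mu_{\tau}$ recalled above, and the expected dimension is $\mu_{\tau}(\gamma_{+})-\mu_{\tau}(\gamma_{-})-1$.

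To prove $(1)$, I would study the boundary of the $1$-dimensional space of index-two cylinders. By the SFT compactness theorem of \cite{BourgeoisEliashbergHoferWysockiZehnder}, a sequence of such cylinders converges to a holomorphic building. The crucial step is to show that in the class $h$ the only possible limits are two-story broken cylinders $\gamma_{+}\to\gamma_{0}\to\gamma_{-}$ with each story of index one. This is exactly where the hypothesis enters: any other degeneration would produce a component with a single positive end and no negative end — a holomorphic plane, which is necessarily asymptotic to a contractible Reeb orbit — or else a nodal configuration that forces such a plane on energy grounds (Fact~\ref{energyconthom} forbids negative-action pieces, and trivial cylinders carry no index). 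Since there are no contractible Reeb orbits, no plane can appear, so no component can acquire extra positive ends and every level remains a cylinder in class $h$. A standard gluing argument gives the converse correspondence, so the signed count of the boundary points of the compactified $1$-manifold vanishes; reading this off in terms of the differential yields $\langle \partial^{2}\gamma_{+},\gamma_{-}\rangle=0$.

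For $(2)$, I would run the usual continuation argument. Given two tailored pairs $(\alpha_{0},J_{0})$ and $(\alpha_{1},J_{1})$ for the same $\xi$, I interpolate by an exact symplectic cobordism on $\mathbb R\times M$ whose two ends are the respective symplectizations and whose almost complex structure $\bar J$ agrees with $J_{0}$ and $J_{1}$ near the ends. Counting rigid (index-zero) cylinders in this cobordism defines a chain map $\Phi$, and counting index-one cylinders defines a chain homotopy; the chain-map and chain-homotopy identities follow by inspecting the boundaries of the index-zero and index-one moduli spaces, whose compactifications again consist only of broken cylinders for precisely the same reason — no contractible orbit means no plane can bubble in the cobordism. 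Composing the maps induced by the two directions of the interpolation, and using a homotopy of homotopies, shows that the composites are chain homotopic to the identity, so the induced maps on homology are mutually inverse isomorphisms. The identical cobordism argument, now applied along a path of perturbations, gives independence of the auxiliary perturbation.

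The main obstacle is the transversality and gluing package for the (possibly multiply covered) cylinders, which is what forces the use of abstract perturbations and is the reason independence of the perturbation must be asserted at all. The conceptual heart of the proof, however, is the uniform exclusion of plane bubbling: once contractible Reeb orbits are absent, every limiting building in a nontrivial class $h$ is forced to be a chain of cylinders, and this is exactly the feature distinguishing the present situation from general contact homology, where $\partial^{2}=0$ can genuinely fail.
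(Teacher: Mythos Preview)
Your sketch is the standard argument and is essentially correct, but note that the paper does not actually supply a proof of this statement: it is recorded as a \emph{Fact} attributed to Eliashberg and Hofer and cited from \cite{Bourgeois}, with no proof given in the text. So there is no ``paper's own proof'' to compare against; the paper is simply quoting the literature result you have outlined.

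That said, your outline matches the argument one finds in \cite{Bourgeois}: SFT compactness on index-two cylinders, the observation that a holomorphic plane must cap a contractible orbit so that the hypothesis forbids plane bubbling and forces every limit building to be a chain of cylinders in the same free homotopy class, gluing to identify the boundary of the one-manifold with broken cylinders, and then the usual continuation/chain-homotopy package in an interpolating exact cobordism for invariance. You also correctly flag the genuine analytic issue---transversality for multiply covered cylinders---which is precisely why the statement includes independence of the abstract perturbation and why the paper later hedges (see Remark~\ref{actchomcchom} and the sentence following it) that these facts ``rely on the assumption that the machinery, needed to prove the analogous properties \dots\ in the closed case, works.'' In short: your proposal is a faithful expansion of the cited result, not an alternative to anything the paper proves.
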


Now we remind the following theorems which have been proven by
Colin, Ghiggini, Honda and Hutchings
in~\cite{ColinGhigginiHondaHutchings}:
\begin{theorem}[\cite{ColinGhigginiHondaHutchings}]
The SFT compactness theorem \cite[Theorem
10.1]{BourgeoisEliashbergHoferWysockiZehnder} holds for
$J$-holomorphic curves in the symplectization of the completion of a
sutured contact manifold, provided that we choose the almost complex
structure $J$ on $\mathbb R \times M^{\ast}$ to be tailored to
$(M^{\ast}, \alpha^{\ast})$.
\end{theorem}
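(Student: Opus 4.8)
The plan is to deduce the statement from the closed-manifold SFT compactness theorem \cite[Theorem 10.1]{BourgeoisEliashbergHoferWysockiZehnder} once an \emph{a priori $C^{0}$-confinement} has been established: every $J$-holomorphic curve $u=(a,f)\co(\dot S,j)\to(\mathbb R\times M^{\ast},J)$ with prescribed positive and negative asymptotic Reeb orbits has image whose $M^{\ast}$-part lies in a fixed compact subset of $M^{\ast}$, independent of $u$. The convergence arguments of \cite{BourgeoisEliashbergHoferWysockiZehnder} -- Gromov--Hofer convergence, bubbling off of spheres and planes, and breaking along Reeb orbits -- are local in the target away from the necks, so once the curves are trapped in a compact region they run verbatim. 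The Hofer energy is bounded exactly as in the closed case, by the difference of the symplectic actions $\mathcal A$ of the asymptotic orbits, which is finite and determined by the asymptotics; so the only genuinely new point is the confinement, and this is where the tailored hypotheses on $J$ enter.

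The first input is Remark~\ref{reeborbincompl}: all periodic orbits of $R_{\alpha^{\ast}}$ lie in the interior of the compact piece $M$, where $t\in(-1,1)$ and $\tau\in(-1,0)$. Hence every puncture of $\dot S$ limits onto an orbit lying in $\{\,|t|\le 1-\delta,\ \tau\le-\delta\,\}$ for some $\delta>0$, and there are \emph{no} closed Reeb orbits in any of the three noncompact ends Top, Bottom and Side. The strategy is then to treat the ends separately, using that conditions (2) and (3) in the definition of a tailored almost complex structure make the projection $\pi_{R}\co\mathbb R\times M^{\ast}\to\widehat{R_{\pm}(\Gamma)}$ -- which forgets the $\mathbb R$- and $t$-coordinates -- a $J_{0}$-holomorphic map near the ends, where $J_{0}$ is the $\widehat\beta_{\pm}$-adapted structure.

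For the Side I would use the Liouville convexity built into the completion. There the contact form is $Cdt+e^{\tau}\beta_{0}$, and the projected curve $g=\pi_{R}\circ u$ is $J_{0}$-holomorphic into $\widehat{R_{\pm}(\Gamma)}$, whose Liouville end carries the exact form $\widehat\beta_{\pm}=e^{\tau}\beta_{0}$. The standard maximum principle for $J_{0}$-holomorphic curves in a completed Liouville domain shows that $\tau\circ g$, equivalently $\tau\circ u$, is subharmonic and attains no interior maximum; since every puncture has $\tau\le-\delta$, the maximum principle forces $\tau\circ u\le 0$, so $u$ never enters the Side end.

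The main obstacle is confinement in the Top and Bottom ends, where the completion is of \emph{translation type} rather than Liouville type: there $\alpha^{\ast}=Cdt+\beta_{\pm}$ with $C$ constant, so the coordinate $t$ need not be subharmonic and a naive maximum principle for it fails. Instead I would argue as follows. Condition (3) makes $\pi_{R}\circ u$ a $J_{0}$-holomorphic curve into the \emph{compact} Liouville domain $R_{\pm}(\Gamma)$ (the Side estimate already confines its $\widehat{R}$-part to $R_{\pm}$), so the horizontal part of $u$ is controlled and it remains only to bound $t$. Here one uses that $(\mathbb R_{t}\times R_{\pm},\,Cdt+\beta_{\pm})$ has Reeb vector field $\tfrac1C\partial_{t}$ and hence no closed Reeb orbits, together with exactness of $d\alpha^{\ast}=d\beta_{\pm}$ on this region. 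If a sequence of such curves had points with $t\to+\infty$, then after rescaling Gromov--Hofer compactness would extract a nonconstant finite-energy holomorphic sphere or plane in the symplectization of this translation-type end. A sphere is impossible because $d\beta_{\pm}$ is exact, whence its area vanishes and it must be constant; a plane is impossible because it would be asymptotic to a closed Reeb orbit of $\tfrac1C\partial_{t}$, of which there are none. This contradiction rules out escape into Top, and symmetrically into Bottom. Combining the three estimates confines every $u$ to $\{\,-1\le t\le 1,\ \tau\le 0\,\}=\mathbb R\times M$, and feeding this a priori bound into \cite[Theorem 10.1]{BourgeoisEliashbergHoferWysockiZehnder} yields the desired compactness. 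The delicate points still to be checked are the behaviour at the corners where Top and Bottom meet Side, so that the two confinement mechanisms can be applied simultaneously, and the exclusion of energy concentration precisely along $t=\pm1$.
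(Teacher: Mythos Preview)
The present paper does not prove this theorem; it is stated with attribution to \cite{ColinGhigginiHondaHutchings} and used as a black box, so there is no proof here against which to compare your proposal.

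That said, your overall strategy---establish a $C^{0}$-confinement of holomorphic curves to the compact piece $M\subset M^{\ast}$ and then run the closed-manifold SFT compactness argument---is indeed the approach of \cite{ColinGhigginiHondaHutchings}, and your treatment of the Side end via the Liouville maximum principle matches theirs. Your handling of the Top and Bottom ends, however, differs from theirs and is where your sketch is weakest. In \cite{ColinGhigginiHondaHutchings} the confinement there is obtained directly: on the Top one has $R_{\alpha^{\ast}}=\tfrac1C\partial_{t}$, so the adapted condition $J\partial_{s}=R_{\alpha^{\ast}}$ makes the projection of $u$ to the $(s,t)$-plane holomorphic, whence $t\circ u$ is harmonic and the ordinary maximum principle (all punctures have $|t|<1$) gives $t\circ u\le 1$; the Bottom is symmetric. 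Your bubbling-by-contradiction argument is more roundabout and, as written, only yields a sequential non-escape statement rather than the per-curve bound you need in order to reduce to the compact setting; it also leans on convergence and classification results for rescaled curves in the symplectization of the noncompact product end $\mathbb R_{t}\times R_{\pm}$, which are not immediate consequences of the references you invoke. The ``delicate points'' you flag at the end (corners and energy concentration at $t=\pm1$) simply do not arise once one uses the harmonic-$t$ argument.
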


\begin{theorem}[\cite{ColinGhigginiHondaHutchings}]\label{welldefconthom}
Let $(M, \Gamma, U(\Gamma), \xi)$ be a sutured contact $3$-manifold
with an adapted contact form $\alpha$, $(M^{\ast}, \alpha^{\ast})$
be its completion and $J$ be an almost complex structure on $\mathbb
R\times M^{\ast}$ which is tailored to $(M^{\ast}, \alpha^{\ast})$.
Then the contact homology algebra $HC(M, \Gamma, \xi)$ is defined
and independent of the choice of contact $1$-form $\alpha$ with $ker
(\alpha) = \xi$, adapted almost complex structure $J$, and abstract
perturbation.
\end{theorem}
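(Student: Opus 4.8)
The plan is to follow the standard symplectic-field-theory construction of contact homology, adapted to the noncompact completion $(M^\ast,\alpha^\ast)$, with the crucial extra ingredient being a confinement result for holomorphic curves. First I would set up the chain complex $\mathcal A(\alpha,J)$ as the free supercommutative $\mathbb Q$-algebra with unit on the good Reeb orbits, graded by the Conley--Zehnder index, with differential $\partial$ counting (signed, with the chosen ordering of positive hyperbolic orbits) rigid genus-zero $J$-holomorphic curves in $\mathbb R\times M^\ast$ with one positive puncture and arbitrarily many negative punctures, modulo the $\mathbb R$-action. By Remark~\ref{reeborbincompl} every periodic orbit of $R_{\alpha^\ast}$ already lies in the compact piece $M$, so the generators coincide with those for $R_\alpha$, and the energy/action estimate of Fact~\ref{energyconthom} is available to control the moduli spaces.

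The first key step is a \emph{confinement lemma}: every $J$-holomorphic curve in $\mathbb R\times M^\ast$ that is asymptotic at all of its punctures to Reeb orbits is contained in $\mathbb R$ times a fixed compact subset of $M^\ast$ (indeed in $\mathbb R\times M$). This is exactly where the tailored structure of $J$ and the product form of $\alpha^\ast$ on the ends are used: on the Top one has $\alpha^\ast=Cdt+\beta_+$ and $J$ is $\partial_t$-invariant, so the coordinate $t$ (resp.\ $-t$ on the Bottom, $\tau$ on the Side, where $\alpha^\ast=Cdt+e^\tau\beta_0$) is subharmonic along the curve; since all punctures map into $M$, the maximum principle forces the curve back into $M$, and one checks this works simultaneously on all three ends and along the corners where they meet. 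Granted confinement, the SFT compactness theorem \cite[Theorem~10.1]{BourgeoisEliashbergHoferWysockiZehnder} applies verbatim inside the compact region, compactifying the moduli spaces by holomorphic buildings of uniformly bounded energy.

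With compactness in hand, $\partial^2=0$ follows from the usual codimension-one boundary analysis: the boundary of a one-dimensional moduli space of index-two curves consists of two-level buildings, and after the standard gluing argument these cancel in pairs, the bad orbits being excluded precisely so that orientations are coherent and no spurious boundary stratum survives. Invariance under the choice of contact form $\alpha$ for $\xi$, of the tailored almost complex structure $J$, and of the abstract perturbation is then proved by the standard continuation argument: interpolate two choices by an exact symplectic cobordism between the two completions, count rigid curves in the cobordism to get a chain map, count curves in a homotopy of cobordisms to get a chain homotopy, and verify that the cobordism versions of the confinement lemma and of SFT compactness still hold, so that these maps are well-defined and compose to homotopy-inverse pairs; the composition and homotopy identities give that $HC(M,\Gamma,\xi)$ is well-defined up to canonical isomorphism.

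The main obstacle is twofold. Analytically it is the confinement lemma: one must genuinely verify that the tailoring hypotheses on $J$ (the $\partial_t$-invariance near $M^\ast\setminus\mathrm{int}(M)$ and the splitting of the projection of $J$ along $\widehat{R_\pm(\Gamma)}$) produce a maximum principle on the Top, Bottom, and Side ends at once, including near their mutual corners, so that no energy or compactness is lost to curves escaping to infinity, and likewise in the cobordisms used for invariance. Transversally, it is the need for abstract perturbations: since the differential counts genus-zero curves with arbitrarily many negative ends, multiply covered curves are unavoidable and classical genericity does not suffice, so one must invoke the virtual/polyfold (or Kuranishi-type) machinery of \cite{ColinGhigginiHondaHutchings} and check that the perturbations can be chosen compatibly with the confinement and gluing arguments. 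Once these two points are settled, the remainder is a routine transcription of the closed-manifold SFT package.
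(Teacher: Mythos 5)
This theorem is quoted verbatim from \cite{ColinGhigginiHondaHutchings}; the paper gives no proof of it, so there is no in-paper argument to compare yours against. Your outline does match the strategy of the cited source: the confinement of holomorphic curves to $\mathbb R\times M$ via the maximum principle applied to $t$ and $\tau$ on the ends (using the $\partial_t$-invariance of $J$ and the holomorphicity of the projection to $\widehat{R_{\pm}(\Gamma)}$), then SFT compactness, $\partial^2=0$, and invariance via tailored exact symplectic cobordisms. You also correctly isolate the transversality issue for multiply covered curves as the unresolved analytic input; note that the paper itself flags exactly this in the remark following the theorem, stating that the result is conditional on the abstract-perturbation machinery needed in the closed case, so your sketch should be read as an outline of the intended argument rather than a complete proof.
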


\begin{remark}\label{actchomcchom}
Fact~\ref{energyconthom} and Fact~\ref{welldefcylconthom} hold for
$J$-holomorphic curves in the symplectization of the completion of a
sutured contact manifold, provided that we choose the almost complex
structure $J$ on $\mathbb R \times M^{\ast}$ to be tailored to
$(M^{\ast}, \alpha^{\ast})$.
\end{remark}

Note that Theorem~\ref{welldefconthom} and Remark~\ref{actchomcchom}
rely on the assumption that the machinery, needed to prove the
analogous properties for contact homology and cylindrical contact
homology in the closed case, works.

\subsection{Gluing sutured contact manifolds}
\label{section:glsutcontman}

Now we briefly describe the procedure of gluing sutured contact
manifolds, together with compatible Reeb vector fields which was
first described by Colin and Honda in \cite{ColinHonda} and
generalized in \cite{ColinGhigginiHondaHutchings}.

\begin{remark}
In \cite{Gabai}, Gabai defined the notion of a sutured manifold
decomposition for sutured 3-manifolds, which is the inverse
construction of the sutured gluing.
\end{remark}

Let $(M',\Gamma',U(\Gamma'),\xi')$ be a sutured contact $3$-manifold
with an adapted contact form $\alpha'$. We denote by $\pi$ the
projection along $\partial_{t}$ defined on $U(\Gamma')$. If we think
of $[-1, 0]\times \Gamma'$ as a subset of $R_{+}(\Gamma')$ (resp.
$R_{-}(\Gamma')$), then we denote the projection by $\pi_{+}$ (resp.
$\pi_{-}$). By definition, the horizontal components
$(R_{\pm}(\Gamma'),\beta'_{\pm} =\alpha'|_{R_{\pm}(\Gamma')})$ are
Liouville manifolds. We denote by $Y'_{\pm}$ their Liouville vector
field. The contact form $\alpha'$ is $dt+\beta'_{\pm}$ on the
neighborhoods $R_{+}(\Gamma')\times[1-\varepsilon,1]$ and
$R_{-}(\Gamma')\times [-1,-1 + \varepsilon]$ of
$R_{+}(\Gamma')=R_{+}(\Gamma')\times\{1\}$ and
$R_{-}(\Gamma')=R_{-}(\Gamma')\times\{-1\}$. In addition, we may
assume without loss of generality that the Reeb vector field
$R_{\alpha'}$ is given by $\partial_{t}$ on $U(\Gamma')$.

Take a $2$-dimensional submanifolds $P_{\pm}\subset
R_{\pm}(\Gamma')$ such that $\partial P_{\pm}$ is the union of
$(\partial P_{\pm})_{\partial}\subset \partial R_{\pm}(\Gamma')$,
$(\partial P_{\pm})_{int}\subset int(R_{\pm}(\Gamma'))$ and
$\partial P_{\pm}$ is positively transversal to the Liouville vector
field $Y'_{\pm}$ on $R_{\pm}(\Gamma')$.

Whenever we refer to $(\partial P_{\pm})_{int}$ and $(\partial
P_{\pm})_{\partial}$, we assume that closures are taken as
appropriate. Moreover we make the assumption that $\pi((\partial
P_{-})_{\partial})\cap \pi((\partial P_{+})_{\partial})=\emptyset$.

Let $\varphi$ be a diffeomorphism which sends
$(P_{+},\beta'_{+}|_{P{+}})$ to $(P_{-},\beta'_{-}|_{P_{-}})$ and
takes $(\partial P_{+})_{int}$ to $(\partial P_{-})_{\partial}$ and
$(\partial P_{+})_{\partial}$ to $(\partial P_{-})_{int}$. We will
refer to the triple $(P_{+}, P_{-}, \varphi)$ as the {\em gluing
data}. For the purposes of gluing, we only need
$\beta'_{+}|_{P_{+}}$ and $\varphi^{\ast}(\beta'_{-}|_{P_{-}})$ to
match up on $\partial P_{+}$, since we can linearly interpolate
between primitives of positive area forms on a surface.

Topologically, we construct the sutured manifold $(M, \Gamma)$ from
$(M', \Gamma')$ and the gluing data $(P_{+}, P_{-}, \varphi)$ as
follows: Let $M=M'/\sim$, where
\begin{itemize}
\item $x\sim \varphi(x)$ for all $x\in P_{+}$;

\item $x\sim x'$ if $x, x'\in \pi^{-1}(\Gamma')$ and $\pi(x)=\pi(x')\in
\Gamma'$.
\end{itemize}
Then
\begin{align*}
R_{\pm}(\Gamma)=\frac{\overline{R_{\pm}(\Gamma')\setminus
P_{\pm}}}{(\partial P_{\pm})_{int}}\sim \pi_{\pm}((\partial
P_{\mp})_{\partial})
\end{align*}
and
\begin{align*}
\Gamma=\frac{\overline{\Gamma'\setminus \pi (\partial P_{+}\sqcup
\partial P_{-})}}{\pi((\partial P_{+})_{int}\cap (\partial
P_{+})_{\partial})} \sim \pi((\partial P_{-})_{int}\cap (\partial
P_{-})_{\partial}).
\end{align*}

In dimension $3$, for the purposes of studying holomorphic curves,
we want to stretch in $t$-direction. In higher dimensions, one needs
to stretch in both $\tau$- and $t$-directions. The construction
depends on the parameter $N$, where $N$ is a stretching parameter in
$t$-direction, and the resulting glued-up sutured contact manifold
is written as $(M_{N}, \Gamma_{N}, U(\Gamma_{N}),
\xi_{N}=ker(\alpha_{N}))$.

Let $M^{(0)} = M^{(0)}_{N}$ (we will suppress $N$ to avoid
cluttering the notation) be the manifold obtained from the
completion $(M')^{\ast}$ by removing the Side (S), i.e.,
\begin{align*}
M^{(0)}=M'\cup (R_{+}(\Gamma')\times[1;\infty))\cup
(R_{+}(\Gamma')\times (-\infty;-1]).
\end{align*}

Then construct $M^{(1)}$ from
\begin{align*}
M^{(0)} \setminus ((P_{+}\times [N, \infty))\cup (P_{-}\times
(-\infty,-N])),
\end{align*}
by taking closures and identifying:
\begin{itemize}
\item $P_{+}\times \{N\}$ with $P_{-}\times \{-N\}$;
\item $(\partial P_{+})_{int} \times [N,\infty)$ with $(\partial P_{-})_{\partial}\times [-N, \infty)$;
\item $(\partial P_{+})_{\partial} \times (-\infty, N]$ with $(\partial P_{-})_{int} \times (-\infty,-N]$;
\end{itemize}
all via the identification $(x,t)\mapsto (\varphi(x), t-2N)$.

Next we take $N'\gg 0$ and truncate the Top and Bottom of $M^{(1)}$
to obtain the (compact) sutured manifold $(M^{(2)}, \Gamma^{(2)},
U(\Gamma^{(2)}))$ so that $M^{(2)}$ contains
\begin{align*}
M'\cup ((\overline{R_{+}(\Gamma')\setminus P_{+}})\times [1,
N'])\cup ((\overline{R_{-}(\Gamma')\setminus P_{-}})\times
[-N',-1]),
\end{align*}
the Reeb vector field is transverse to the horizontal boundary, and
the vertical boundary $E$ is foliated by interval Reeb orbits with
fixed action $\geq 3N'$. Attaching  $V = [0, \tau_{0}]\times E$ to
$M^{(2)}$ for some specific $\tau_{0}$ gives us $(M_{N}, \Gamma_{N},
U(\Gamma_{N}))$. The horizontal boundary which is positively (resp.
negatively) transverse to $R$ will be called $R_{+}(\Gamma_{N})$
(resp. $R_{-}(\Gamma_{N})$). For more details we refer to
\cite{ColinGhigginiHondaHutchings}.

\section{Construction}
\label{construction}

In this section we construct a sutured contact solid torus with $2n$
longitudinal sutures, where $n\geq 2$.
\subsection{Gluing map}
\label{section:gl_map}

Now we construct $H\in C^{\infty}(\mathbb R^2)$. The flow of the
Hamiltonian vector field associated to $H$ will play a role of
gluing map when we will apply the gluing construction described in
Section~\ref{section:glsutcontman} to the sutured contact solid
cylinder constructed in Section~\ref{section:gluing}.

We fix $x\in \mathbb R^2$ and consider $H_{sing}\co\mathbb R^2\to
\mathbb R$ given by $H_{sing}=\mu r^2 \cos(n\theta)$ in polar
coordinates about $x$, where $\mu>0$ and $n\in \mathbb N$. Note that
$H_{sing}$ is singular only at $x$. We obtain $H\in
C^{\infty}(\mathbb R^2)$ from $H_{sing}$ by perturbing $H_{sing}$ on
a small disk $D(r_{sing})$ about $x$ in such a way that $H$ has
$n-1$ nondegenerate saddle points and interpolates with no critical
points with $H_{sing}$ on $D(r_{sing})$. In other words,
$H=H_{sing}$ on $\mathbb R^2\setminus D(r_{sing})$. For the level
sets of $H_{sing}$ and $H$ in the case $n=3$ we refer to
Figure~\ref{levelset3}.

The construction of $H$ was initially described by Cotton-Clay as a
construction of a Hamiltonian function whose time-$1$ flow is a
symplectic smoothing of the singular representative of pseudo-Anosov
map in a neighborhood of a singular point with $n$ prongs
in~\cite{Cotton-Clay}.

Since some of the properties of $H$ described in~\cite{Cotton-Clay}
will be important for further discussion, we will state them in the
next remark.

\begin{remark}\label{re1_closedlevsets}
We can choose $H$ in such a way that it satisfies the following
properties:
\begin{itemize}
\item[(1)] $H$ can be written as $x\sin(\pi y)$ in some coordinates $(x,y)$ in a connected neighborhood
containing its critical points;

\item[(2)] there are no components of level sets of $H$ which are
circles;

\item[(3)] there is an embedded curve which is a component of one of the level
curves of $H$ and connects all the saddle points of $H$. We call
this embedded curve $\gamma$.
\end{itemize}
\end{remark}

For the detailed construction of $H$ we refer to Lemmas 3.4 and 3.5
in~\cite{Cotton-Clay}.

\begin{figure}[t]
\begin{center} {\includegraphics [height=200pt]
    {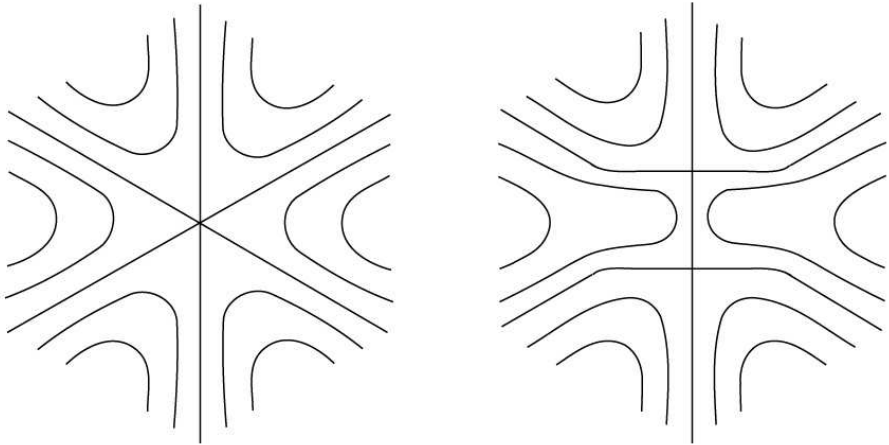}}
\end{center}
\caption{The level sets of $H_{sing}$ (left) and the level sets of
$H$ (right) in the case $n=3$.} \label{levelset3}
\end{figure}

\begin{lemma}\label{le1_prarfm}
Let $s$ be a saddle point of $H$. Then there are coordinates $(x,y)$
about $s$ such that $H=axy$ for $a>0$.
\end{lemma}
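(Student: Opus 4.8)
The plan is to use the Morse lemma together with the specific normal form for $H$ near its critical points provided by Remark~\ref{re1_closedlevsets}(1), namely $H = x\sin(\pi y)$ in some coordinates $(x,y)$ in a connected neighborhood of all the saddles. First I would localize: fix a saddle point $s$ and restrict attention to the neighborhood furnished by that remark, in which $H = x\sin(\pi y)$. Since $s$ is a critical point of $H$ in this chart, we have $\partial_x H = \sin(\pi y) = 0$ and $\partial_y H = \pi x\cos(\pi y) = 0$ at $s$; the first equation forces $y(s) \in \mathbb Z$, and then $\cos(\pi y(s)) = \pm 1 \neq 0$, so the second forces $x(s) = 0$. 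Thus in these coordinates $s = (0, y_0)$ with $y_0 \in \mathbb Z$. Translating the $y$-coordinate by $y_0$ (which shifts $\sin(\pi y)$ to $\pm\sin(\pi y)$ depending on the parity of $y_0$), and possibly replacing $x$ by $-x$, we may assume $s$ is the origin and $H = x\sin(\pi y)$ near $0$.

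Next I would extract the leading behavior: near the origin $\sin(\pi y) = \pi y\,(1 + O(y^2))$, so $H = \pi xy\,(1 + g(y))$ with $g$ smooth and $g(0) = 0$. The Hessian of $H$ at $0$ is $\begin{pmatrix} 0 & \pi \\ \pi & 0\end{pmatrix}$, which is nondegenerate with one positive and one negative eigenvalue — confirming that $s$ is a nondegenerate saddle, consistent with the construction. Now define a new coordinate $\tilde y = y\sqrt{1 + g(y)}$; since $1 + g(y) > 0$ near $0$ and $\frac{d\tilde y}{dy}\big|_{0} = 1 \neq 0$, the map $y \mapsto \tilde y$ is a local diffeomorphism fixing the origin, and $(x, \tilde y)$ is a valid coordinate system near $s$ in which $H = \pi x\tilde y$. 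Setting $a = \pi > 0$ and relabeling $\tilde y$ as $y$ gives $H = axy$ as desired.

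Alternatively — and this is the cleaner route if one prefers not to track the sign and parity bookkeeping — I would simply invoke the Morse lemma directly: $s$ is a nondegenerate critical point of index $1$ (one positive, one negative eigenvalue of the Hessian, as computed above, or equivalently because $s$ is a saddle of the smoothing $H$ by construction), so there are coordinates $(u,v)$ about $s$ with $H - H(s) = u^2 - v^2$; but the level sets of $H$ through $s$ pass through $s$ (the curve $\gamma$ of Remark~\ref{re1_closedlevsets}(3) does), so $H(s) = 0$ — indeed $H(s) = x(s)\sin(\pi y(s)) = 0$ from the computation above — hence $H = u^2 - v^2$. Then the linear change $x = u + v$, $y = u - v$ yields $H = xy$, and rescaling one coordinate by any $a > 0$ gives $H = axy$; the freedom in the Morse lemma coordinates lets us absorb whatever positive constant is convenient.

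The only genuinely delicate point is making sure the hypotheses line up: we need $H(s) = 0$ so that the Morse normal form has no constant term, and we need the index to be $1$ (so the normal form is $u^2 - v^2$ rather than $\pm(u^2 + v^2)$) — both follow immediately from Remark~\ref{re1_closedlevsets}(1) via the explicit $H = x\sin(\pi y)$ computation, so there is no real obstacle; the lemma is essentially a direct application of the Morse lemma to a critical point whose value is known to be zero. I would present the argument via the explicit coordinate change $\tilde y = y\sqrt{1 + g(y)}$ since it is self-contained and makes the constant $a = \pi$ transparent, while remarking that any $a > 0$ is achievable by a further rescaling.
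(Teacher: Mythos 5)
Your ``alternative'' route is precisely the paper's proof: invoke the Morse lemma to write $H=H(s)-u^2+v^2$, observe $H(s)=0$ (the paper also deduces this from Remark~\ref{re1_closedlevsets}), factor the difference of squares, and absorb the constant $a>0$ into a linear change of coordinates. You correctly isolate the two points that need checking ($H(s)=0$ and index $1$), and your Hessian computation from $H=x\sin(\pi y)$ justifies both. The one thing the paper records that you omit is that the resulting coordinates $(x,y)$ have the same orientation as the Morse coordinates; this is not part of the lemma statement but is used later, when $\beta=\tfrac{\varepsilon}{2}(xdy-ydx)$ is required to satisfy $d\beta>0$ in these coordinates, so it is worth a sentence.

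Your preferred first route, however, contains a computational slip. From $H=\pi xy\,(1+g(y))$ you set $\tilde y=y\sqrt{1+g(y)}$ and claim $H=\pi x\tilde y$; in fact $\pi x\tilde y=\pi xy\sqrt{1+g(y)}\neq \pi xy(1+g(y))$. The square root would be the right substitution if $H$ depended quadratically on $y$, but here it is linear in $y$, so the correct change of variable is $\tilde y=y\,(1+g(y))$ (still a local diffeomorphism since its derivative at $0$ is $1$), which does give $H=\pi x\tilde y$. With that correction the first route is fine and arguably more concrete (it pins down $a=\pi$), but as written it does not prove the identity you assert. Since you propose to present that version, fix the substitution or fall back on the Morse-lemma argument, which is what the paper does.
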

\begin{proof}
First observe that from Remark~\ref{re1_closedlevsets} it follows
that $H(s)=0$. By Morse lemma, there are coordinates $(x',y')$ about
$s$ such that $H=H(s)-x'^2+y'^2$. Given $H(s)=0$, we can write
\begin{align*}
H=-x'^2+y'^2=a\left (\frac{y'}{\sqrt{a}}-\frac{x'}{\sqrt{a}}\right
)\left (\frac{y'}{\sqrt{a}}+\frac{x'}{\sqrt{a}}\right ).
\end{align*}
Now let
\begin{align*}x=\frac{y'}{\sqrt{a}}+\frac{x'}{\sqrt{a}} \quad
\mbox{and} \quad
y=\frac{y'}{\sqrt{a}}-\frac{x'}{\sqrt{a}}.\end{align*} Clearly $x$
and $y$ satisfy the statement of the lemma. In addition, the
orientation of the pair $(x,y)$ coincides with the orientation of
$(x',y')$.
\end{proof}
Let $U_{k}$ be a neighborhood of the $k$-th saddle point $p_{k}$ of
$H$ from Lemma~\ref{le1_prarfm}.

\subsection{Contact form}
\label{section:cont_form}

\begin{claim}
If $(M,\omega)$ is an exact symplectic manifold, i.e.,
$\omega=d\beta$, then the flow $\varphi_{X_{H}}^{t}$ of a
Hamiltonian vector field $X_{H}$ consists of exact symplectic maps,
i.e.,
\begin{align*}
(\varphi_{X_{H}}^{t})^{\ast}\beta - \beta = df_{t}\quad \mbox{for
some function}\ f_{t}.
\end{align*}
\end{claim}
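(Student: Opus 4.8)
The plan is to compute directly how $\beta$ transforms under the Hamiltonian flow by differentiating in $t$ and using Cartan's formula. First I would recall that the Hamiltonian vector field $X_H$ is defined by $i_{X_H}\omega = -dH$ (with whatever sign convention the paper uses; this only affects the name of $f_t$, not the conclusion), and that $\varphi_{X_H}^t$ is its time-$t$ flow. The key computation is to consider the function $t\mapsto (\varphi_{X_H}^t)^\ast\beta$ and show its derivative is exact. Using the standard identity $\frac{d}{dt}(\varphi_{X_H}^t)^\ast\beta = (\varphi_{X_H}^t)^\ast \mathcal L_{X_H}\beta$ and Cartan's magic formula $\mathcal L_{X_H}\beta = d\,i_{X_H}\beta + i_{X_H}d\beta = d\,i_{X_H}\beta + i_{X_H}\omega = d(\beta(X_H)) - dH = d(\beta(X_H) - H)$, we see that $\frac{d}{dt}(\varphi_{X_H}^t)^\ast\beta = (\varphi_{X_H}^t)^\ast d(\beta(X_H) - H) = d\big((\varphi_{X_H}^t)^\ast(\beta(X_H)-H)\big)$, since pullback commutes with $d$.

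Next I would integrate this from $0$ to $t$. Because $(\varphi_{X_H}^0)^\ast\beta = \beta$, we obtain
\begin{align*}
(\varphi_{X_H}^t)^\ast\beta - \beta = \int_0^t d\big((\varphi_{X_H}^s)^\ast(\beta(X_H)-H)\big)\,ds = d\left(\int_0^t (\varphi_{X_H}^s)^\ast(\beta(X_H)-H)\,ds\right),
\end{align*}
where in the last step I interchange the exterior derivative with the integral over the parameter $s$ (legitimate since everything is smooth and $d$ acts only in the spatial variables). Setting $f_t := \int_0^t (\varphi_{X_H}^s)^\ast(\beta(X_H)-H)\,ds$ gives a smooth function with $(\varphi_{X_H}^t)^\ast\beta - \beta = df_t$, which is exactly the claim.

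There is no serious obstacle here; the only point requiring a little care is the justification for swapping $d$ and $\int_0^t ds$, and the bookkeeping of the Hamiltonian sign convention, but both are routine. One should also note that on a general (possibly noncompact or boundaried) exact symplectic manifold the flow $\varphi_{X_H}^t$ need only be defined on the relevant domain for the times in question — in the intended application $H$ has compact support away from where it matters, or one restricts to a region where the flow exists — so the statement is to be read on the domain of definition of the flow. With that understood, the argument above is complete.
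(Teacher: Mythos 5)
Your argument is correct and is essentially identical to the paper's proof: both differentiate $(\varphi_{X_H}^t)^\ast\beta$ in $t$, apply Cartan's formula together with $i_{X_H}\omega=-dH$ to get $\mathcal L_{X_H}\beta=d(\beta(X_H)-H)$, and integrate to produce the same primitive $f_t=\int_0^t(\beta(X_H)-H)\circ\varphi_{X_H}^s\,ds$. Your explicit remarks on interchanging $d$ with the $s$-integral and on the domain of the flow are fine but not points the paper dwells on.
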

\begin{proof}
Since $\varphi^{0}=id$,
\begin{align*}
(\varphi_{X_{H}}^{t})^{\ast}\beta - \beta = \int\limits^{t}_{0}
\frac{d}{ds}(\varphi_{X_{H}}^s)^{\ast}\beta ds.
\end{align*}
Since by definition $i_{X_H}\omega=-dH$, the integrand is equal to
\begin{align*}
(\varphi_{X_{H}}^s)^{\ast}L_{X_{H}}\beta & =
(\varphi^s)^{\ast}(i_{X_H}d\beta + di_{X_{H}}\beta) =
(\varphi_{X_{H}}^s)^{\ast}(-dH+d\beta(X_{H})) \\
& = d(\varphi_{X_{H}}^s)^{\ast}(-H+\beta(X_{H})) =
d([-H+\beta(X_{H})]\circ \varphi_{X_{H}}^{s}).
\end{align*}
Thus
\begin{align*}
f_t=\int\limits^{t}_{0}(-H+\beta(X_{H}))\circ \varphi_{X_{H}}^{s}ds.
\end{align*}
\end{proof}

Notice that our definition of $X_{H}$ is slightly different from the
standard one; usually $X_{H}$ is defined by $i_{X_H}\omega=dH$.

Note that the condition that $\beta(X_{H})=H$ is equivalent to the
condition that $L_{X_{H}}\beta=0$.

\begin{remark}\label{re1_funcanalcon}
Let $f:=f_1=\int^{1}_{0}(-H+\beta(X_{H}))\circ
\varphi_{X_{H}}^{s}ds$. In addition, let $S\subset M$ be a region
such that $\beta(X_{H})=H$ on $S$ and $S':=\{ s\in S :
\varphi_{X_{H}}^{t}(s)\in S\ \forall  t\in[0,1] \}$. Then
$f|_{S'}=0$ and $(\varphi^{1}_{X_{H}})^{\ast}(\beta)=\beta$ on $S'$.
\end{remark}

In the next two lemmas we construct a $1$-form $\beta$ on $\mathbb
R^2$ with $d\beta>0$ and show that $\beta$ is ``adapted'' to $H$,
i.e., $\varphi_{X_{H}}^{\ast}\beta=\beta$ near the saddle points of
$H$ and on the region far enough from $D(r_{sing})$, where $X_{H}$
is a Hamiltonian vector field with respect to $d\beta$ and
$\varphi_{X_{H}}$ is the time-$1$ map of the flow of $X_{H}$. The
condition that $\beta (X_{H})=H$ and Remark~\ref{re1_funcanalcon}
will play a crucial role when we will compare
$\varphi_{X_{H}}^{\ast}\beta$ and $\beta$.

\begin{lemma}\label{le1_formanalcon}
There exists a 1-form $\beta$ on $\mathbb R^2$ satisfying the
following:
\begin{itemize}
\item[(1)] $d\beta > 0$;

\item[(2)] the singular foliation given by $ker(\beta)$ has isolated
singularities and no closed orbits;

\item[(3)] the elliptic points of the singular foliation of $\beta$ are
the saddle points of $H$; $\beta=\frac{\varepsilon}{2}(xdy-ydx)$ on
$U_{k}$ with respect to the coordinates from Lemma~\ref{le1_prarfm},
where $k\in \{1,\dots,n-1\}$ and $\varepsilon$ is a small positive
real number;
\item[(4)] $\beta=\frac{1}{2}r^2d\theta$ on $\mathbb
R^2\setminus D(r_{sing})$ with respect to the polar coordinates
whose origin is at the center of $D(r_{sing})$;
\item[(5)] the hyperbolic points of the singular foliation of $\beta$ are
located on $\gamma$, outside of $U_{k}$'s and distributed in such a
way that between each two closest elliptic points there is exactly
one hyperbolic point.
\end{itemize}
\end{lemma}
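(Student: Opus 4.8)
The plan is to build $\beta$ by patching together three kinds of local models, so I would work from the inside out. First I would set up the desired behavior near the saddles: by Lemma~\ref{le1_prarfm} we have coordinates $(x,y)$ near $p_k$ in which $H=axy$, and in these coordinates the standard symplectic form $dx\wedge dy$ makes $X_H$ linear with $L_{X_H}(\tfrac{\varepsilon}{2}(x\,dy-y\,dx))=0$ (this is the ``$\beta(X_H)=H$ up to constants'' condition from Remark~\ref{re1_funcanalcon}, after rescaling $\varepsilon$ against $a$); here $\ker\beta$ has an \emph{elliptic} singularity at $p_k$ since $\tfrac{\varepsilon}{2}(x\,dy-y\,dx)$ is rotationally symmetric. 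So on each $U_k$ set $\beta=\tfrac{\varepsilon}{2}(x\,dy-y\,dx)$, which gives $d\beta=\varepsilon\,dx\wedge dy>0$ and satisfies (3). On the outer region $\mathbb R^2\setminus D(r_{sing})$ set $\beta=\tfrac12 r^2\,d\theta$, so $d\beta=r\,dr\wedge d\theta>0$, giving (4); note $\ker\beta=\langle\partial_r\rangle$ is nonsingular there, consistent with (2).

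The substantive step is the interpolation on the annular/handle region $D(r_{sing})\setminus\bigcup_k U_k$, where I need a primitive $\beta$ of a positive area form that restricts to the two prescribed models on the boundary pieces and whose kernel foliation has exactly the singularities demanded by (2), (3), (5). The key flexibility is the remark already in the text: on a surface one can linearly interpolate between primitives of positive-area forms, because if $\beta_0,\beta_1$ both have $d\beta_i>0$ then $d((1-t)\beta_0+t\beta_1)=(1-t)d\beta_0+t\,d\beta_1>0$; so convex combinations stay admissible. Concretely I would: (i) fix a positive area form $\Omega$ on $\mathbb R^2$ agreeing with $\varepsilon\,dx\wedge dy$ near each $p_k$ and with $r\,dr\wedge d\theta$ outside $D(r_{sing})$; (ii) choose \emph{any} primitive and then correct it by an exact form $dg$ (which changes neither $d\beta$ nor the admissibility) so that it matches the two local models on a collar of the relevant boundary circles — this is possible because two primitives of the same area form on an annulus differ by a closed $1$-form, and the periods can be matched since both models assign the same integral (total area) to the relevant curves; (iii) use a bump-function interpolation supported away from the $U_k$'s and away from $\mathbb R^2\setminus D(r_{sing})$.

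The real obstacle — and the reason (2), (3), (5) are phrased the way they are — is controlling the \emph{singular foliation} $\ker\beta$ in the interpolation region: I must arrange that the only zeros of $\beta$ are the $n-1$ elliptic points at the $p_k$ together with hyperbolic points lying on the curve $\gamma$ of Remark~\ref{re1_closedlevsets}, exactly one between consecutive elliptic points, and crucially \emph{no closed leaves}. The zeros of $\beta$ are where the $1$-form vanishes; a generic such $\beta$ has isolated zeros with indices summing to $\chi$ of the relevant region, and the index count forces the hyperbolic (index $-1$) points to balance the elliptic (index $+1$) ones, which is exactly the ``exactly one hyperbolic point between two closest elliptic points'' bookkeeping. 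To kill closed orbits I would engineer $\beta$ near $\gamma$ so that $\gamma$ itself is a union of (singular) leaves — modeling $H$ as $x\sin(\pi y)$ near its critical locus via Remark~\ref{re1_closedlevsets}(1) and choosing $\beta$ whose kernel there is transverse to the $y$-direction except along $\gamma$ — and then invoke Remark~\ref{re1_closedlevsets}(2) (no circle components of level sets of $H$) to propagate absence of closed leaves to the rest, since away from the handle region $\ker\beta$ is radial and obviously has no closed orbit. I expect the delicate part to be the explicit local model near each hyperbolic point matching smoothly to both the elliptic model on $U_k$ and the radial model outside; I would handle this by writing down $\beta$ in Cotton-Clay's coordinates $x\sin(\pi y)$ and perturbing, deferring the smoothness checks to the next lemma.
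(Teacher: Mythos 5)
Your strategy inverts the one that actually works, and the inversion is where the gap lies. You fix the area form first (securing (1)) and then try to control the kernel foliation afterwards; but once $d\beta=\Omega$ is prescribed, $\beta$ is determined only up to a closed $1$-form, and neither the location of its zeros nor the absence of closed leaves of $\ker(\beta)$ can be arranged by "genericity". The index count you invoke constrains only the total signed number of singularities in $D(r_{sing})\setminus\bigcup_k U_k$, not their position on $\gamma$ nor the interleaving required by (5). More seriously, your mechanism for excluding closed orbits conflates two different foliations: Remark~\ref{re1_closedlevsets}(2) concerns circle components of level sets of $H$, whereas condition (2) of the lemma concerns closed leaves of $\ker(\beta)$. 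These are genuinely different singular foliations (Figure~\ref{charfol} exists precisely to contrast them -- their singularities even have different types at the same points), so "no circles among level sets of $H$" propagates nothing about $\ker(\beta)$, and making $\gamma$ a union of leaves of $\ker(\beta)$ is neither forced by your setup nor sufficient to rule out closed leaves elsewhere in the handle region. Two smaller problems: the period-matching step is not automatic, since the period of a primitive of $\Omega$ around $\partial D(r_{sing})$ is $\int_{D(r_{sing})}\Omega$, which equals the period $\pi r_{sing}^2$ of $\tfrac12 r^2d\theta$ only if you normalize $\Omega$ accordingly; and the claim that bump-function interpolation between primitives preserves $d\beta>0$ is false, because for a non-constant $\chi$ one picks up the uncontrolled cross term $d\chi\wedge(\beta_0-\beta_1)$ (this is exactly the difficulty Lemma~\ref{le1_almvertical} has to work around, using the extra $t$-direction available there).

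The paper's proof runs in the opposite order, and that order is the missing idea. One first constructs the singular foliation: an oriented Morse--Smale singular foliation $\mathcal F$ on $\mathbb R^2$ with no closed orbits, elliptic points exactly at the saddle points of $H$, hyperbolic points on $\gamma$ interleaved as in (5) and away from the $U_k$, and transverse exit through $\partial D(r_{sing})$; this is elementary surface topology and gives (2), (3), (5) by fiat. One then realizes $\mathcal F$ as $\ker(\beta_0)$ for a $1$-form $\beta_0$ with the prescribed local models ($\tfrac12(x\,dy-y\,dx)$ near elliptic points, $2x\,dy+y\,dx$ near hyperbolic points, $\tfrac12 r^2d\theta$ outside $D(r_{sing})$), so that $d\beta_0>0$ holds near the singularities and outside $D(r_{sing})$ but not necessarily in between. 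Finally one sets $\beta=g\beta_0$ with $g>0$, $g\equiv\varepsilon$ on the $U_k$, $g\equiv1$ outside $D(r_{sing})$, and $dg(X)\gg0$ along an orienting vector field $X$ of $\mathcal F$; since $d\beta=dg\wedge\beta_0+g\,d\beta_0$, the first term dominates and forces $d\beta>0$, while rescaling by a positive function does not change $\ker(\beta)$. The existence of such a $g$, strictly increasing along every leaf, is precisely where the "no closed orbits" property of $\mathcal F$ is used. If you want to salvage your outline, this is the ingredient to import: prescribe the foliation first and buy positivity of $d\beta$ at the end, rather than the reverse.
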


\begin{proof}
Consider a singular foliation $\mathcal F$ on $\mathbb R^2$ which
satisfies the following:

\begin{itemize}
\item[(1)] $\mathcal F$ is Morse-Smale and has no closed orbits.

\item[(2)] The singular set of $\mathcal F$ consists of elliptic points and hyperbolic points.
The elliptic points are the saddle points of $H$. The hyperbolic
points are located on $\gamma$ and distributed in such a way that
between each two closest elliptic points there is exactly one
hyperbolic point. In addition, the hyperbolic points are outside of
$U_{k}$'s.

\item[(3)] $\mathcal F$ is oriented, and for one choice of orientation the flow is transverse to
and exits from $\partial D(r_{sing})$.
\end{itemize}

\begin{figure}[t]
\begin{center} {\includegraphics [height=200pt]
    {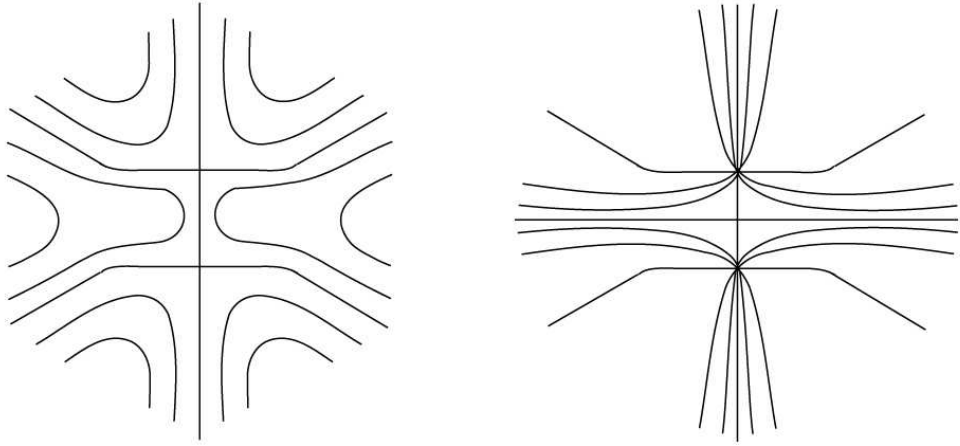}}
\end{center}
\caption{The level sets of $H$ (left) and the characteristic
foliation of $\beta$ (right) in the case $n=3$}\label{charfol}
\end{figure}

Next, we modify $\mathcal F$ near each of the singular points so
that $\mathcal F$ is given by $\beta_{0} = \frac{1}{2}(xdy-ydx)$ on
$U_{k}$ with respect to the coordinates from Lemma~\ref{le1_prarfm},
and $\beta_0 = 2xdy+ydx$ near a hyperbolic point. In addition, on
$\mathbb R^2\setminus D(r_{sing})$,
$\beta_{0}=\frac{1}{2}r^2d\theta$ with respect to the polar
coordinates whose origin is at the center of $D(r_{sing})$. Finally,
we get $\mathcal F$ given by $\beta_{0}$, which satisfies $d\beta_0
> 0$ near the singular points and on $\mathbb R^2\setminus D(r_{sing})$. Now let $\beta
= g\beta_0$, where $g$ is a positive function with $dg(X) \gg 0$
outside of $(\cup^{n-1}_{k=1}U_{k})\cup(\mathbb R^2\setminus
D(r_{sing}))$, $g|_{\cup^{n-1}_{k=1}U_{k}}=\varepsilon$,
$g|_{\mathbb R^2\setminus D(r_{sing})}=1$ and $X$ is an oriented
vector field for $\mathcal F$ (nonzero away from the singular
points). Since $d\beta = dg\wedge \beta_{0} + g\wedge d\beta_{0}$,
$dg (X) \gg 0$ guarantees that $d\beta
> 0$. Here $\varepsilon$ is a small positive real number.
\end{proof}

\begin{remark}\label{re1_formandfunc}
From the previous lemma we get $\beta$ defined on $\mathbb R^2$ with
the following properties:

\begin{itemize}

\item[($i$)] $d\beta >0$ on $\mathbb R^2$.

\item[($ii$)] $\beta=\frac{\varepsilon}{2}(xdy-ydx)$ and $H=axy$ on $U_{k}$ for $k=1,\dots,n-1$.
In other words, the saddle points of $H$ are exactly the elliptic
singularities of $\beta$.

\item[($iii$)] $\beta=\frac{1}{2}r^2d\theta$ and $H=\mu r^2 \cos(n\theta)$ on
$\mathbb R^2\setminus D(r_{sing})$.
\end{itemize}
\end{remark}
For the comparison of the level sets of $H$ with the singular
foliation of $\beta$ in the case $n=3$ we refer to
Figure~\ref{charfol}.

\begin{lemma}\label{le2_formanalcon}
Let $\beta$ be a $1$-form from Lemma~\ref{le1_formanalcon}. The
Hamiltonian vector field $X_{H}$ of $H$ with respect to the area
form $d\beta$ satisfies $\beta(X_{H})=H$ on $(\cup_{k=1}^{n-1}
U_{k})\cup ( \mathbb R^2\setminus D(r_{sing}))$.
\end{lemma}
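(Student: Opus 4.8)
The plan is to verify the identity $\beta(X_H)=H$ by a direct computation in each of the two regions, using the explicit normal forms for $\beta$ and $H$ provided by Remark~\ref{re1_formandfunc}. Recall that $X_H$ is defined here by the nonstandard convention $i_{X_H}d\beta = -dH$, and that (as noted after the Claim) the condition $\beta(X_H)=H$ is equivalent to $L_{X_H}\beta=0$; I would state which of these two formulations I check, since they are interchangeable.

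First I would treat the neighborhoods $U_k$ of the saddle points. Here $\beta=\frac{\varepsilon}{2}(x\,dy-y\,dx)$, so $d\beta=\varepsilon\,dx\wedge dy$, and $H=axy$. Writing $X_H=P\,\partial_x+Q\,\partial_y$, the equation $i_{X_H}d\beta=-dH$ becomes $\varepsilon(P\,dy-Q\,dx)=-(ay\,dx+ax\,dy)$, giving $P=-\tfrac{a}{\varepsilon}x$ and $Q=\tfrac{a}{\varepsilon}y$. Then
\begin{align*}
\beta(X_H)=\frac{\varepsilon}{2}\bigl(xQ-yP\bigr)=\frac{\varepsilon}{2}\left(\frac{a}{\varepsilon}xy+\frac{a}{\varepsilon}xy\right)=axy=H,
\end{align*}
as desired.

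Next I would treat $\mathbb R^2\setminus D(r_{sing})$, where $\beta=\frac12 r^2\,d\theta$ and $H=\mu r^2\cos(n\theta)$. Here $d\beta=r\,dr\wedge d\theta$. Solving $i_{X_H}d\beta=-dH$ in polar coordinates (with $dH=2\mu r\cos(n\theta)\,dr-\mu n r^2\sin(n\theta)\,d\theta$) yields $X_H=\mu n\sin(n\theta)\,\partial_r+\tfrac{2\mu}{r}\cos(n\theta)\,\partial_\theta$, and then $\beta(X_H)=\tfrac12 r^2\cdot\tfrac{2\mu}{r}\cos(n\theta)\cdot\tfrac1r\cdot r = \mu r^2\cos(n\theta)=H$; I would be careful with the coordinate expression $d\theta(\partial_\theta)=1$ and the factors of $r$. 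Since these two regions are exactly where the normal forms of Remark~\ref{re1_formandfunc} hold, this establishes the claim. There is no real obstacle here — the lemma is a routine verification — but the one point requiring minor care is bookkeeping with the polar-coordinate expressions and the sign convention for $X_H$; alternatively one can observe that on each region $\beta=\frac12 d^c(\text{quadratic})$ is scale-equivariant and $H$ is built from the same quadratic radial factor, so $L_{X_H}\beta=0$ follows from the homogeneity, but the explicit computation above is cleanest.
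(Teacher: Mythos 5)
Your overall strategy is exactly the paper's: a direct verification of $\beta(X_H)=H$ in the two regions where Remark~\ref{re1_formandfunc} gives normal forms, using the convention $i_{X_H}d\beta=-dH$. The computation on the $U_k$ is correct and matches the paper's.

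However, your polar-coordinate computation on $\mathbb R^2\setminus D(r_{sing})$ contains a factor-of-$r$ error that, as written, breaks the verification there. With $d\beta=r\,dr\wedge d\theta$ and $X_H=A\,\partial_r+B\,\partial_\theta$, one has $i_{X_H}d\beta=rA\,d\theta-rB\,dr$, and equating this to $-dH=-2\mu r\cos(n\theta)\,dr+\mu n r^2\sin(n\theta)\,d\theta$ forces $A=\mu n r\sin(n\theta)$ and $B=2\mu\cos(n\theta)$, i.e.
\begin{align*}
X_H=\mu n r\sin(n\theta)\,\partial_r+2\mu\cos(n\theta)\,\partial_\theta,
\end{align*}
which is the vector field appearing in the paper. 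Your candidate $X_H=\mu n\sin(n\theta)\,\partial_r+\tfrac{2\mu}{r}\cos(n\theta)\,\partial_\theta$ is $1/r$ times this, so it does not satisfy $i_{X_H}d\beta=-dH$; moreover, with your vector field $\beta(X_H)=\tfrac12 r^2\cdot\tfrac{2\mu}{r}\cos(n\theta)=\mu r\cos(n\theta)\neq H$, and the extra ``$\cdot\tfrac1r\cdot r$'' in your displayed evaluation does not repair this. With the corrected $X_H$ the check goes through immediately: $\beta(X_H)=\tfrac12 r^2\cdot 2\mu\cos(n\theta)=\mu r^2\cos(n\theta)=H$. So the gap is purely arithmetic and easily fixed, but the step as written would fail.
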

\begin{proof}
First we work on $U_{k}$, where $k\in \{1,\dots,n-1\}$. From
Remark~\ref{re1_formandfunc} it follows that
$\beta=\frac{\varepsilon}{2}(xdy-ydx)$ and $H=axy$ on $U_{k}$. Let
$X_{H}$ be a Hamiltonian vector field defined by $i_{X_{H}}
d\beta=-dH$. We show that
\begin{align*}X_{H}=-\frac{ax}{\varepsilon}
\frac{\partial}{\partial x}+\frac{ay}{\varepsilon}
\frac{\partial}{\partial y}
\end{align*}
is a solution of the equation
\begin{align}\label{e1_formanconstr}
\beta(X_{H})=H
\end{align}
on $U_{k}$. We calculate
\begin{align*}
i_{X_{H}}( d\beta)=(-\frac{ax}{\varepsilon}\frac{\partial}{\partial
x}+\frac{ay}{\varepsilon}\frac{\partial}{\partial y}) \lrcorner
(\varepsilon dx\wedge dy)=-axdy-aydx=-dH
\end{align*}
and
\begin{align*}
\beta(X_{H})=\frac{\varepsilon}{2}(xdy-ydx)\left
(-\frac{ax}{\varepsilon}\frac{\partial}{\partial
x}+\frac{ay}{\varepsilon}\frac{\partial}{\partial y}\right )=axy=H.
\end{align*}

Next, by Remark~\ref{re1_formandfunc}, $\beta=\frac{1}{2}r^2d\theta$
and $H=\mu r^2\cos(n\theta)$ on $\mathbb R^2\setminus D(r_{sing})$.

As in the previous case, we show that
\begin{align*}
X_{H}=n\mu r\sin(n\theta)\frac{\partial}{\partial r}+2\mu
\cos(n\theta)\frac{\partial}{\partial \theta}
\end{align*}
is a solution of Equation~(\ref{e1_formanconstr}) on $\mathbb
R^2\setminus D(r_{sing})$. We calculate
\begin{align*}
i_{X_{H}}(d\beta)&=(n\mu r \sin(n\theta)\partial_{r}+2\mu
\cos(n\theta)\partial_{\theta})\lrcorner(rdr\wedge d\theta)\\
&=-2\mu r \cos(n\theta)dr+n\mu r^2\sin(n\theta)d\theta=-dH
\end{align*}
and
\begin{align*}
\beta(X_{H})=\left (\frac{1}{2}r^2d\theta \right ) \left (n\mu r
\sin(n\theta)\frac{\partial}{\partial r}+2\mu
\cos(n\theta)\frac{\partial}{\partial \theta}\right )=\mu r^2
\cos(n\theta)=H.
\end{align*}
\end{proof}
Let $\varphi^{s}_{X_{H}}$ be the time-$s$ flow of $X_{H}$. Consider
\begin{align*}
&S:=\{ x\in  \mathbb R^2\setminus D(r_{sing}):
\varphi_{X_{H}}^{s}(x)\in \mathbb R^2\setminus D(r_{sing})\
\forall s\in[0,1] \}\quad \mbox{and}\\
&V_{k}:=\{ x\in  U_{k}: \varphi_{X_{H}}^{s}(x)\in U_{k}\ \forall
s\in[0,1] \}.
\end{align*}
Since the saddle points of $H$ are the fixed points of
$\varphi^{s}_{X_{H}}$ for $s\in [0,1]$, $V_{k}$ contains an open
neighborhood about the $k$-th saddle point of $H$. In addition, note
that $S$ contains the level sets of $H$ which do not intersect
$D(r_{sing})$, and $(\mathbb R^2\setminus D(R))$, where $R\gg
r_{sing}$. For ease of notation, we write $\varphi_{X_{H}}$ instead
of $\varphi^{1}_{X_{H}}$.
\begin{remark}\label{re1_analconstr}
Lemma~\ref{le2_formanalcon} implies that
$\varphi_{X_{H}}^{\ast}(\beta)=\beta$ on $S\cup
(\cup^{n-1}_{k=1}V_{k})$.
\end{remark}

\begin{remark}\label{re1_phiperpoints}
From Remark~\ref{re1_closedlevsets} and the fact that the flow of
$X_{H}$ preserves the level sets of $H$ it follows that
$\{p_{k}\}^{n-1}_{k=1}$ is the set of periodic points of
$\varphi_{X_{H}}$ on $\mathbb R^2$.
\end{remark}

In the next lemma we construct a contact form $\alpha$ on $D^2\times
[-1,1]$ such that $R_{\alpha}$ has vertical trajectories.

\begin{lemma}\label{le1_almvertical}
Let $\beta_0$ and $\beta_1$ be two $1$-forms on $D^2$ such that
$\beta_0=\beta_1$ in a neighborhood of $\partial D^2$ and
$d\beta_0=d\beta_1=\omega>0.$ Then there exists a contact $1$-form
$\alpha$ with Reeb vector field $R_{\alpha}$ on $[-1,1] \times D^2$
with coordinates $(t,x)$, where t is a coordinate on $[-1,1]$ and
$x$ is a coordinate on $D^2$, with the following properties:
\begin{itemize}
\item[(1)] $\alpha=dt+\varepsilon\beta_{0}$ in a neighborhood of $\{-1\}\times
D^2$;
\item[(2)] $\alpha=dt+\varepsilon\beta_{1}$ in a neighborhood of $\{1\}\times
D^2$;
\item[(3)] $R_{\alpha}$ is collinear to $\frac{\partial}{\partial
t}$ on $[-1,1]\times D^2$;
\item[(4)] $R_{\alpha}=\frac{\partial}{\partial t}$ in a
neighborhood of $[-1,1]\times \partial D^2$.
\end{itemize}
Here $\varepsilon$ is a small positive number.
\end{lemma}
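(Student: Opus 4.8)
The plan is to build $\alpha$ by interpolating between the two product forms $dt+\varepsilon\beta_0$ and $dt+\varepsilon\beta_1$ in the $t$-direction, using the fact that $\beta_0$ and $\beta_1$ differ by an exact form. First I would note that since $d\beta_0=d\beta_1$ on $D^2$ and $D^2$ is simply connected, we may write $\beta_1-\beta_0=dg$ for some $g\in C^\infty(D^2)$; moreover, because $\beta_0=\beta_1$ near $\partial D^2$, the function $g$ is locally constant near $\partial D^2$, and we may take $g$ to be constant (say $\equiv 0$) in a neighborhood of $\partial D^2$. Now choose a smooth cutoff $\chi\co[-1,1]\to[0,1]$ with $\chi\equiv 0$ near $t=-1$ and $\chi\equiv 1$ near $t=1$, and set $\beta_t:=\beta_0+\chi(t)\,dg$, so that $\beta_t$ is a $t$-dependent family of primitives with $d\beta_t=\omega>0$ for every $t$, agreeing with $\beta_0$ near $t=-1$ and with $\beta_1$ near $t=1$, and equal to $\beta_0=\beta_1$ near $[-1,1]\times\partial D^2$.

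The natural first guess $\alpha=dt+\varepsilon\beta_t$ is \emph{not} contact in general, because $\partial_t\beta_t$ contributes a $dt\wedge(\cdot)$ term that can have the wrong sign. The fix is to add a correction in the $t$-direction: set
\begin{align*}
\alpha := (1+\varepsilon\, \chi'(t)\, g(x))\,dt + \varepsilon\beta_t = (1+\varepsilon\chi'(t)g)\,dt+\varepsilon\beta_0+\varepsilon\chi(t)\,dg.
\end{align*}
Then I would compute $d\alpha = \varepsilon\,d\chi'(t)g\wedge dt + \varepsilon\,d\beta_0 + \varepsilon\chi'(t)\,dt\wedge dg = \varepsilon\,\omega + \varepsilon\chi'(t)\,dt\wedge dg - \varepsilon\chi'(t)\,dt\wedge dg$, wait — more carefully, $d((1+\varepsilon\chi'(t)g)dt)=\varepsilon\chi'(t)\,dg\wedge dt$ and $d(\varepsilon\chi(t)dg)=\varepsilon\chi'(t)\,dt\wedge dg$, so these two terms cancel and $d\alpha=\varepsilon\,\omega$ (pulled back via the projection $[-1,1]\times D^2\to D^2$). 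Hence $\alpha\wedge d\alpha = \varepsilon(1+\varepsilon\chi'(t)g)\,dt\wedge\omega$, which is a positive volume form provided $1+\varepsilon\chi'(t)g>0$ everywhere; since $g$ and $\chi'$ are bounded on the compact domain, this holds once $\varepsilon$ is chosen small enough, which is exactly the hypothesis. This establishes that $\alpha$ is contact, and properties (1), (2) hold because $\chi'\equiv 0$ and $\chi\in\{0,1\}$ near the respective ends.

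It remains to identify the Reeb vector field and verify (3), (4). Since $d\alpha=\varepsilon\,\pi^*\omega$ annihilates $\partial_t$ (as $\pi^*\omega$ has no $dt$-component), we get $i_{\partial_t}d\alpha=0$, so $R_\alpha$ is collinear to $\partial_t$; normalizing, $R_\alpha=(1+\varepsilon\chi'(t)g)^{-1}\partial_t$, giving (3). Near $[-1,1]\times\partial D^2$ we arranged $g\equiv 0$, so there $\alpha=dt+\varepsilon\beta_0$ and $R_\alpha=\partial_t$, giving (4). The main obstacle is really the one already addressed — ensuring the contact condition survives the interpolation — and the key idea is the cancellation produced by the $\varepsilon\chi'(t)g\,dt$ correction term; everything else is a routine check of the boundary normalizations. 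I would also remark that the sign convention $d\beta>0$ and the smallness of $\varepsilon$ are used only to guarantee positivity of $\alpha\wedge d\alpha$, and that one should double-check orientations so that $dt\wedge\omega$ is the positive volume form on $[-1,1]\times D^2$.
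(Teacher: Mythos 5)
Your proposal is correct and takes essentially the same route as the paper's proof: the same interpolation $\beta_0+\chi(t)\,dg$, the same correction term $(1+\varepsilon\chi'(t)g)\,dt$, the same cancellation yielding $d\alpha=\varepsilon\omega$, and the same identification of $R_\alpha=(1+\varepsilon\chi'(t)g)^{-1}\partial_t$ with $g\equiv 0$ near $\partial D^2$. One small motivational quibble: the uncorrected form $dt+\varepsilon\beta_t$ is in fact still contact for $\varepsilon$ small (the offending term is $O(\varepsilon^2)$ against the leading $O(\varepsilon)$ term); the real purpose of the correction is to force $i_{\partial_t}d\alpha=0$ so that the Reeb field is collinear to $\partial_t$, i.e., to obtain property (3).
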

\begin{proof}
Since $D^2$ is simply connected and $\omega=d\beta_0=d\beta_1>0$,
there exists a function $h \in C^{\infty}(D^2)$ such that $\beta_1 -
\beta_0 = dh$. Let $\chi_{0}\co [-1,1]\to [0,1]$ be a smooth map for
which $\chi_{0}(t)=0$ for $-1\leq t \leq-1+\varepsilon_{\chi_{0}}$,
$\chi_{0}(t)=1$ for $1-\varepsilon_{\chi_{0}}\leq t \leq 1$ and
$\chi_{0}'(t)\geq 0$ for $t\in [-1,1]$, where
$\varepsilon_{\chi_{0}}$ is a small positive number. In addition, we
define $\chi_{1}(t):=\chi_{0}'(t)$.

Consider $[-1,1] \times D^2$ equipped with a $1$-form
\begin{align*}\alpha=(1+\varepsilon
\chi_1(t)h)dt+\varepsilon((1-\chi_{0}(t))\beta_{0}+\chi_{0}(t)\beta_{1}).
\end{align*}
We then compute
\begin{align*}  d \alpha & =
\varepsilon (\chi_{1}(t)dh\wedge
dt+(1-\chi_{0}(t))d\beta_{0}+\chi_{0}(t)d\beta_{1}+\chi_{0}'(t)dt\wedge
\beta_{1} - \chi_{0}'(t)dt\wedge \beta_{0}) \\
& = \varepsilon(\chi_{1}(t)dh\wedge dt+\chi_{0}'(t)dt\wedge
\beta_{1}
- \chi_{0}'(t)dt\wedge \beta_{0})+\varepsilon\omega \\
& = \varepsilon(\chi_{1}(t)dh\wedge dt-\chi_{0}'(t)\beta_{1}\wedge
dt + \chi_{0}'(t)\beta_{0}\wedge dt)+\varepsilon\omega \\
& = \varepsilon(\chi_{1}(t)dh\wedge
dt-\chi_{0}'(t)(\beta_{1}-\beta_{0})\wedge dt)+\varepsilon\omega \\
& = \varepsilon(\chi_{1}(t)dh\wedge
dt-\chi_{0}'(t)dh\wedge dt)+\varepsilon\omega \\
& = \varepsilon(\chi_{1}(t) - \chi_{0}'(t))dh\wedge dt +
\varepsilon\omega = \varepsilon\omega
\end{align*} and hence
\begin{align*}
\alpha\wedge d\alpha& = ((1+\varepsilon
\chi_1(t)h)dt+\varepsilon((1-\chi_{0}(t))\beta_{0}+\chi_{0}(t)\beta_{1}))\wedge
\varepsilon\omega \\
& = \varepsilon dt\wedge \omega + \varepsilon^2(\chi_1(t)hdt +
(1-\chi_{0}(t))\beta_{0}+\chi_{0}(t)\beta_{1})\wedge \omega.
\end{align*}
If $\varepsilon$ is sufficiently small, then $\alpha$ satisfies the
contact condition, i.e., $\alpha \wedge d\alpha > 0$.

Now let us show that the Reeb vector field $R_{\alpha}$ is given by
\begin{align*} R_{\alpha}=\frac{1}{1+\varepsilon
\chi_1(t)h} \frac{\partial}{\partial t}.
\end{align*}
First we compute
\begin{align*}
i_{R_{\alpha}}(d\alpha) & = \left (\frac{1}{1+\varepsilon
\chi_1(t)h}
\partial_{t}\right )\lrcorner(\varepsilon \omega) = 0.
\end{align*}
Then we check the normalization condition, i.e.,
$\alpha(R_{\alpha})=1$:
\begin{align*}
\alpha(R_{\alpha}) & = ((1+\varepsilon
\chi_1(t)h)dt+\varepsilon((1-\chi_{0}(t))\beta_{0}+\chi_{0}(t)\beta_{1}))\left
(\frac{1}{1+\varepsilon \chi_1(t)h} \frac{\partial}{\partial
t}\right ) \\
& = \frac{1+\varepsilon \chi_1(t)h}{1+\varepsilon \chi_1(t)h} = 1.
\end{align*}
Since $\beta_1 = \beta_0$ in a neighborhood of $\partial D$, $h=0$
in a neighborhood of $\partial D^2$ and hence
$R_{\alpha}=\frac{\partial}{\partial t}$ in a neighborhood of
$[-1,1]\times \partial D^2$. Finally, we see that $\alpha$ satisfies
Conditions $(1)-(4)$.
\end{proof}

Fix $R_{\ast}\gg r_{sing}$ such that there is an annular
neighborhood $V_{R_{\ast}}$ of $\partial D(R_{\ast})$ in $\mathbb
R^2$ with $V_{R_{\ast}}\subset S$. Consider $D(R_{\ast})$ with two
$1$-forms $\beta_{0}:=\beta|_{D(R_{\ast})}$, where $\beta$ is a
$1$-form from Lemma~\ref{le1_formanalcon}, and
$\beta_{1}:=\varphi_{X_{H}}^{\ast}(\beta)|_{D(R_{\ast})}$. By
Remark~\ref{re1_analconstr},
\begin{align}\label{e1_cylinf}
\beta_{0}=\beta_{1}\quad \mbox{on}\quad V_{R_{\ast}}\cap
D(R_{\ast}).
\end{align}
In addition, we have
\begin{align}\label{e2_cylinf}
d\beta_{1}=d(\varphi_{X_{H}}^{\ast}(\beta)|_{D(R_{\ast})})=\varphi_{X_{H}}^{\ast}(d\beta)|_{D(R_{\ast})}=
(d\beta)|_{D(R_{\ast})}=d\beta_{0}>0.
\end{align}
From Equations~(\ref{e1_cylinf}) and (\ref{e2_cylinf}) it follows
that $\beta_{0}$ and $\beta_{1}$ satisfy the conditions of
Lemma~\ref{le1_almvertical}.

Now take $[-1,1]\times D(R_{\ast})$ with the contact $1$-form
$\alpha$ from Lemma~\ref{le1_almvertical} with $\beta_{0}$ and
$\beta_{1}$ as in the previous paragraph. Note that
$\beta_{1}-\beta_{0}=dh$ for $h\in C^{\infty}(D(R_{\ast}))$. We can
rewrite this equation as
\begin{align}\label{e3_cylinf}
\varphi_{X_{H}}^{\ast}(\beta)|_{D(R_{\ast})}-\beta|_{D(R_{\ast})}=dh.
\end{align}
Let us remind that
\begin{align*}
\varphi_{X_{H}}^{\ast}(\beta)-\beta=df
\end{align*}
on $\mathbb R^2$, where
\begin{align*}
f=\int^{1}_{0}(-H+\beta (X_{H}))\circ\varphi_{X_{H}}^{s}ds.
\end{align*}
Hence, $h:=f|_{D(R_{\ast})}$ satisfies Equation~(\ref{e3_cylinf}).
From Remark~\ref{re1_funcanalcon} it follows that
$f|_{D(R_{\ast})}=0$ on $(\cup^{n-1}_{k=1} V_{k})\cup
(D(R_{\ast})\cap S)$. Thus, $h=0$ on $(\cup^{n-1}_{k=1} V_{k})\cup
(D(R_{\ast})\cap S)$.
\begin{remark}
Since $h=0$ on $(\cup^{n-1}_{k=1} V_{k})\cup (D(R_{\ast})\cap S)$,
by the construction of $\alpha$, $R_{\alpha}=\partial_{t}$ on
$(\cup^{n-1}_{k=1} [-1,1]\times V_{k})\cup([-1,1]\times
(D(R_{\ast})\cap S))$.
\end{remark}
Let $\beta_{-}:=\varepsilon\beta_{0}$ and
$\beta_{+}:=\varepsilon\beta_{1}$, where $\varepsilon$ is a constant
from Lemma~\ref{le1_almvertical} which makes $\alpha$ contact.

\subsection{Gluing}
\label{section:gluing}

In this section we will construct the sutured contact solid torus
with $2n$ parallel longitudinal sutures, where $n\geq 2$.

First we construct surfaces with boundary $P_{+},P_{-}, D\subset
\mathbb R^2$ with the following properties:
\begin{itemize}
\item[(1)] $P_{\pm}\subset D$;
\item[(2)] $(\partial P_{\pm})_{\partial} \subset \partial D$ and $(\partial P_{\pm})_{int}\subset int(D)$;
\item[(3)] $\varphi_{X_{H}}$ maps $P_{+}$ to $P_{-}$ in such a way that $\varphi_{X_{H}}((\partial P_{+})_{int})=(\partial P_{-})_{\partial}$ and
$\varphi_{X_{H}}((\partial P_{+})_{\partial})=(\partial
P_{-})_{int}$;
\item[(4)] $(\partial P_{-})_{\partial} \cap (\partial P_{+})_{\partial} =
\emptyset$.
\end{itemize}

Recall that
\begin{align*}
X_{H}=n\mu r\sin(n\theta)\frac{\partial}{\partial r}+2\mu
\cos(n\theta)\frac{\partial}{\partial \theta}\quad \mbox{and}\quad
\beta_{-}=\beta_{+}=\frac{\varepsilon}{2}r^2d\theta
\end{align*}
on $D(R_{\ast})\cap S$. Note that $X_{H}$ is collinear to
$-\partial_{r}$ for $\theta=\frac{3\pi}{2n}+\frac{2\pi k}{n}$, where
$k\in \{0,\dots,n-1\}$. For simplicity, let us denote
\begin{align*}
\theta^{-}_{k}:=\frac{3\pi}{2n}+\frac{2\pi k}{n}-\frac{\pi}{2n}\quad
\mbox{and}\quad \theta^{+}_{k}:=\frac{3\pi}{2n}+\frac{2\pi
k}{n}+\frac{\pi}{2n},
\end{align*}
where $k\in\{ 0,\dots,n-1\}$.

Fix $R$ such that $r_{sing}\ll R \ll R_{\ast}$ and there is an
annular neighborhood $V(R)$ of $\partial D(R)$ in $D(R_{\ast})$
satisfying $\{(r,\theta)\ : \ R\leq r\leq R_{\ast} \}\subset
V(R)\subset S\cap D(R_{\ast})$.

Consider $D(R)\subset D(R_{\ast})$. Let $a^{+}_{k}$ be a segment on
$\partial D(R)$ which starts at $(R,\theta^{-}_{k})$ and ends at
$(R,\theta^{+}_{k})$, i.e., $a^{+}_{k}:=\{(R,\theta):\theta^{-}_{k}
\leq\theta\leq \theta^{+}_{k}\}$.

Consider $\cup_{k=0}^{n-1} \varphi_{X_{H}}(a^{+}_{k})$. It is easy
to see that every level set of $H$ which intersects $a^{+}_{k}$
intersects it only once. Hence, using that there are no closed level
sets of $H$ and $X_{H}$ is $\frac{2\pi}{n}$-symmetric on
$D(R_{\ast})\setminus D(r_{sing})$, we get
\begin{align*}
\left(\bigcup_{k=0}^{n-1} \varphi_{X_{H}}(a^{+}_{k})\right)\bigcap
\left(\bigcup_{k=0}^{n-1} a^{+}_{k}\right)=\emptyset.
\end{align*}
Let $a^{-}_{k}:=\varphi_{X_{H}}(a^{+}_{k})$. By possibly making $R$
and $R_{\ast}$ big enough, we can make $a^{-}_{k}$'s to be in
$V(R)$. Consider the endpoints of $a^{-}_{k}$'s. Since $X_{H}$ is
$\frac{2\pi}{n}$-symmetric outside of $D(r_{sing})$, it is easy to
see that
$\varphi_{X_{H}}(R,\theta^{-}_{k})=(\tilde{R},\tilde{\theta}^{-}_{k})$
and
$\varphi_{X_{H}}(R,\theta^{+}_{k})=(\tilde{R},\tilde{\theta}^{+}_{k})$,
where
$\theta^{-}_{k}-\frac{\pi}{2n}<\tilde{\theta}^{-}_{k}<\theta^{-}_{k}$,
$\theta^{+}_{k}<\tilde{\theta}^{+}_{k}<\theta^{+}_{k}+\frac{\pi}{2n}$
and $\tilde{R}>R$. In addition, observe that $\tilde{R}$ is the same
for all endpoints of $a^{+}_{k}$'s.

Let $\{b^{+}_{k}\}_{k=0}^{n-1}$ be a set of embedded curves on
$D(R_{\ast})$ with the following properties:
\begin{itemize}
\item[($P_{1}$)] $b^{+}_{k-1}$ starts
at the terminal point of $a^{+}_{k-1}$ and ends at the initial point
of $a^{+}_{k}$, where $k$ is considered mod $n$;
\item[($P_{2}$)] $b^{+}_{k-1}\subset \{ (r,\theta): r>r_{sing},\ \theta^{+}_{k-1} \leq \theta \leq
\theta^{-}_{k}\}$ and $b^{+}_{k-1}\subset V(R)$ for $k=0,\dots,n-1$;
\item[($P_{3}$)] $\varphi_{X_{H}}(b^{+}_{k})\subset \{(r,\theta): r> R\}\subset V(R)$ for $k=0,\dots,n-1$;
\item[($P_{4}$)] the region bounded by $a^{+}_{k}$'s and $b^{+}_{k}$'s
has smooth boundary;
\item[($P_{5}$)] each level set of $H$ which intersects $b^{+}_{k}$
intersects it only once.
\end{itemize}
For simplicity, we take $\frac{2\pi}{n}$-symmetric $b^{+}_{k}$'s,
i.e., $b^{+}_{k+1}$ can be obtained from $b^{+}_{k}$, where $k$ is
considered mod $n$, by doing $\frac{2\pi}{n}$-positive rotation
about the center of $D(R_{\ast})$.

\begin{figure}[t]
\begin{center} {\includegraphics [width=200pt, height=200pt]
    {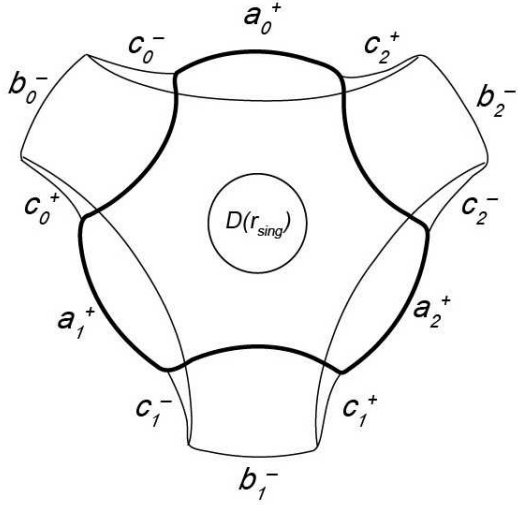}}
\end{center}
\caption{Construction of $P_{+}$, $P_{-}$ and $D$ in the case
$n=3$}\label{constofd}
\end{figure}

Note that Properties $(P_{2})$ and $(P_{3})$ and the form of $X_{H}$
on $D(R_{\ast})\setminus D(r_{sing})$ imply that
\begin{align}\label{e1_infgl}
\varphi_{X_{H}}(b^{+}_{k-1})\subset \{ (r,\theta)\ : \ r>R,\
\tilde{\theta}^{+}_{k-1} \leq \theta \leq
\tilde{\theta}^{-}_{k}\}\subset V(R),
\end{align}
where $k\in \{0,\dots,n-1\}$. Again, using that the level sets of
$H$ which intersects $b^{+}_{k}$ intersects it only once, there are
no closed level sets of $H$ and $X_{H}$ is
$\frac{2\pi}{n}$-symmetric, we obtain
\begin{align*}
\left(\bigcup_{k=0}^{n-1} \varphi_{X_{H}}(b^{+}_{k})\right)\bigcap
\left(\bigcup_{k=0}^{n-1} b^{+}_{k}\right)=\emptyset.
\end{align*}
Let $b^{-}_{k}:=\varphi_{X_{H}}(b^{+}_{k})$. From
Formula~(\ref{e1_infgl}) and the construction of $a^{+}_{k}$'s it
follows that
\begin{align}\label{e2_infgl}
\left(\bigcup_{k=0}^{n-1} a^{+}_{k}\right)\bigcap
\left(\bigcup_{k=0}^{n-1} b^{-}_{k}\right)=\emptyset.
\end{align}

Then we connect the terminal point of $a^{+}_{k}$ with the initial
point of $b^{-}_{k}$ by the line segment $c^{-}_{k}$ and the
terminal point of $b^{-}_{k}$ with the initial point of
$a^{+}_{k+1}$ by the line segment $c^{+}_{k}$. From the construction
above it follows that $c^{-}_{k}$ intersects $D(R)$ only at the
terminal point of $a^{+}_{k}$, and $c^{+}_{k}$ intersects $D(R)$
only at the initial point of $a^{+}_{k+1}$. Then we round the
corners between $c^{-}_{k}$ and $a^{+}_{k}$, $c^{-}_{k}$ and
$b^{-}_{k}$, $c^{+}_{k}$ and $b^{-}_{k}$, $c^{+}_{k}$ and
$a^{+}_{k+1}$. Finally, we get a surface whose boundary consists of
$a^{+}_{k}$'s, $b^{-}_{k}$'s, $c^{+}_{k}$'s and $c^{-}_{k}$'s, which
we call $D$. See Figure~\ref{constofd}.

\begin{remark}\label{re1_boundparts}
By the construction, $a^{\pm}_{k}$'s, $b^{\pm}_{k}$'s and
$c^{\pm}_{k}$ lie in $V(R)\subset D(R_{\ast})\cap S$.
\end{remark}

Now we take $[-1,1]\times D$ with a contact form
$\alpha:=\alpha|_{[-1,1]\times D}$ and contact structure
$\xi=ker(\alpha)$. Let $\Gamma=\{0\}\times \partial D$ in
$[-1,1]\times D$ and $U(\Gamma):=[-1,0]\times [-1,1]\times \Gamma$
be a neighborhood of $\Gamma$ with coordinates $(\tau,t)\in
[-1,0]\times [-1,1]$, where $t$ is a usual $t$-coordinate on
$[-1,1]\times D$. By Remark~\ref{re1_boundparts}, we can make
$U(\Gamma)$ such that
\begin{align}\label{e3_infgl}
U(\Gamma)\subset [-1,1]\times (D\cap S).
\end{align}
Observe that $\partial_{h}([-1,1]\times D)=R_{+}\cup R_{-}$, where
$R_{+}=\{-1\}\times D$ and $R_{+}=\{1\}\times D$ with respect to the
coordinates on $[-1,1]\times D$. In addition,
$\partial_{v}([-1,1]\times D)=[-1,1]\times
\partial D$ with respect the coordinates on $[-1,1]\times D$. Let $\beta_{\pm}:=\beta_{\pm}|_{\{\pm1\}\times
D}$.

\begin{lemma}\label{le1_suturbefglug}
$([-1,1]\times D, \Gamma, U(\Gamma), \xi)$ is a sutured contact
manifold and $\alpha$ is an adapted contact form.
\end{lemma}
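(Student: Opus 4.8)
The plan is to verify the three defining conditions of a sutured contact manifold for the explicitly constructed data $([-1,1]\times D, \Gamma, U(\Gamma), \xi)$, using the contact form $\alpha$ produced by Lemma~\ref{le1_almvertical} together with the properties of $\beta$ established in Lemmas~\ref{le1_formanalcon} and~\ref{le2_formanalcon}. First I would check that $(M,\Gamma,U(\Gamma))$ is a sutured manifold in the topological sense: here $M=[-1,1]\times D$, $\Gamma=\{0\}\times\partial D$, and $U(\Gamma)=[-1,0]\times[-1,1]\times\Gamma$ with the obvious $(\tau,t)$-coordinates; conditions (1)--(3) of the sutured manifold definition are immediate from the product structure, with $R_{+}(\Gamma)=\{1\}\times D$, $R_{-}(\Gamma)=\{-1\}\times D$, horizontal boundary $\{\pm1\}\times D$ and vertical boundary $[-1,1]\times\partial D$, and corners $\{0\}\times\{\pm1\}\times\partial D$.

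Next I would verify the three conditions for $\alpha$ to be adapted. For condition (1), I must show $(R_{\pm}(\Gamma),\beta_{\pm}=\alpha|_{R_{\pm}(\Gamma)})$ are Liouville manifolds. By construction (Lemma~\ref{le1_almvertical}(1)--(2)) we have $\alpha=dt+\varepsilon\beta_{0}$ near $\{-1\}\times D$ and $\alpha=dt+\varepsilon\beta_{1}$ near $\{1\}\times D$, so $\beta_{-}=\varepsilon\beta_{0}=\varepsilon\beta|_{D(R_{\ast})}|_D$ and $\beta_{+}=\varepsilon\beta_{1}=\varepsilon\varphi_{X_{H}}^{\ast}(\beta)|_D$; since $d\beta>0$ on $\mathbb R^2$ by Lemma~\ref{le1_formanalcon}(1) and $\varphi_{X_{H}}$ is a symplectomorphism, $d\beta_{\pm}>0$ on $D$, so $(D,\beta_{\pm})$ is an exact symplectic manifold with boundary. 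The remaining point is that the Liouville vector field $Y_{\pm}$ dual to $\beta_{\pm}$ is positively transverse to $\partial D$; on the annular neighborhood $V(R)\subset S\cap D(R_{\ast})$ containing $\partial D$ we have $\beta_{0}=\beta_{1}=\tfrac12 r^2 d\theta$ (by Remark~\ref{re1_analconstr} and Lemma~\ref{le1_formanalcon}(4), since $\partial D\subset V(R)\subset\mathbb R^2\setminus D(r_{sing})$), whose Liouville field is $\tfrac12 r\,\partial_r$, manifestly outward-transverse to $\partial D$ provided $\partial D$ is star-shaped in the relevant annulus — which we may arrange in the construction, or at worst arrange that $\partial D$ is positively transverse to $\tfrac12 r\partial_r$. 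For condition (2), I use $U(\Gamma)\subset[-1,1]\times(D\cap S)$ (Equation~\ref{e3_infgl}) and the fact that on $D\cap S$ both $\beta_{0}$ and $\beta_{1}$ equal $\tfrac12 r^2 d\theta$, so $\beta=\beta_{+}=\beta_{-}=\tfrac{\varepsilon}{2}r^2 d\theta$ there; moreover $h=0$ on $D(R_{\ast})\cap S$, so from the explicit formula for $\alpha$ in Lemma~\ref{le1_almvertical} we get $\alpha=dt+\varepsilon\beta_{0}$ on $U(\Gamma)$, which is exactly $Cdt+\beta$ with $C=1$ and $\beta=\varepsilon\beta_0$ independent of $t$ and with no $dt$-term. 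For condition (3), $\partial_{\tau}$ on $U(\Gamma)=[-1,0]\times[-1,1]\times\Gamma$ — where $\tau$ is the coordinate normal to $\{0\}\times\partial D$ inside $D$ — must be the Liouville field of $\beta_{\pm}$; since near $\partial D$ the form $\beta_{\pm}=\tfrac{\varepsilon}{2}r^2d\theta$ has Liouville field $\tfrac12 r\partial_r$, I would simply declare the $\tau$-coordinate so that $\partial_\tau$ points along $\tfrac12 r\partial_r$ (equivalently, flow $\partial D$ inward by the negative Liouville flow to build $U(\Gamma)$), making (3) hold by construction.

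The main obstacle, and the step that needs genuine care, is the compatibility bookkeeping between the three coordinate systems: the ambient $(t,x)$-coordinates on $[-1,1]\times D$ from Lemma~\ref{le1_almvertical}, the polar coordinates $(r,\theta)$ on $D(R_{\ast})$ in which $\beta$ is normalized, and the sutured coordinates $(\tau,t)$ on $U(\Gamma)$; one must check that these can be chosen mutually consistently so that $\partial_\tau=Y_\pm$, $\alpha=Cdt+\beta$, and the corners match up — all simultaneously — and this is where the hypotheses $U(\Gamma)\subset[-1,1]\times(D\cap S)$, $h\equiv 0$ on $D\cap S$, and $\beta_0=\beta_1=\tfrac12 r^2 d\theta$ near $\partial D$ are doing the real work. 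I expect the proof to consist largely of assembling these already-established facts and pointing out that the $t$-independence of $\beta_\pm$ on $U(\Gamma)$ together with the agreement $\beta_+=\beta_-$ there is precisely what the adapted-form definition demands; no new estimates are required, only careful invocation of Remarks~\ref{re1_analconstr} and~\ref{re1_boundparts}, Equation~\ref{e3_infgl}, and Lemma~\ref{le1_almvertical}.
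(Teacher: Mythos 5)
Your proposal is correct and follows essentially the same route as the paper: both verify the three adapted-form conditions by noting $d\beta_\pm>0$, using $\beta_-=\beta_+=\tfrac{\varepsilon}{2}r^2d\theta$ on $D\cap S$ (via $h\equiv 0$ there and $U(\Gamma)\subset[-1,1]\times(D\cap S)$) to get $\alpha=dt+\beta_\pm$ on $U(\Gamma)$, computing the Liouville field $\tfrac12 r\partial_r$ from $i_{Y_\pm}d\beta_\pm=\beta_\pm$, invoking the construction of $D$ for positive transversality to $\partial R_\pm$, and finally choosing $\tau$ so that $\partial_\tau=\tfrac12 r\partial_r$. Your extra remarks on the coordinate bookkeeping and the star-shapedness of $\partial D$ only make explicit what the paper asserts as following "from the construction of $D$."
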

\begin{proof}
First note that $\alpha|_{R{-}}=\beta_{-}$ and
$\alpha|_{R_{+}}=\beta_{+}$. Let us check that $(R_{-},\beta_{-})$
and $(R_{+},\beta_{+})$ are Liouville manifolds. From the
construction of $\beta_{\pm}$ it follows that
$d(\beta_{-})=d(\beta_{+})>0$. Since $\beta_{-}=\beta_{+}$ on $D\cap
S$, by Equation~(\ref{e3_infgl}), we have $\alpha=dt+\beta_{-}$ on
$U(\Gamma)$. Recall that
$\beta_{-}=\beta_{+}=\frac{\varepsilon}{2}r^2d\theta$ on $D\cap S$.
Hence, $\alpha|_{U(\Gamma)}=dt+\frac{\varepsilon}{2} r^2d\theta$.
The calculation
\begin{align*}
i_{Y_{\pm}|_{R_{\pm}\cap
U(\Gamma)}}(d\beta_{\pm})=\left(\frac{1}{2}r\partial_{r}\right)\lrcorner(\varepsilon
rdr\wedge d\theta)=\frac{\varepsilon}{2} r^2d\theta=\beta_{\pm}
\end{align*}
implies that the Liouville vector fields $Y_{\pm}|_{R_{\pm}\cap
U(\Gamma)}$ are equal to $\frac{1}{2}r\partial_{r}$. From the
construction of $D$ it follows that $Y_{\pm}$ is positively
transverse to $\partial R_{\pm}$. Therefore,
$(R_{-},\varepsilon\beta_{0})$ and $(R_{+},\varepsilon\beta_{1})$
are Liouville manifolds. As we already mentioned,
$\alpha=dt+\beta_{-}$ on $U(\Gamma)$. Finally, if we take $\tau$
such that $\partial_{\tau}=\frac{1}{2}r\partial_{r}$, then
$([-1,1]\times D, \Gamma, U(\Gamma), \xi)$ becomes a sutured contact
manifold with an adapted contact form $\alpha$.
\end{proof}

Now observe that from the construction of $\alpha$ it follows that
$\alpha|_{[-1,-1+\varepsilon_{\chi_{0}}]\times D}=dt+\beta_{-}$ and
$\alpha|_{[1-\varepsilon_{\chi_{0}},1]\times D}=dt+\beta_{+}$. Then
we define $P_{\pm}\subset R_{\pm}$. Let $P_{+}$ be a region bounded
by $a^{+}_{k}$'s and $b^{+}_{k}$'s in $R_{+}$ and let $P_{-}$ be a
region bounded by $a^{-}_{k}$'s and $b^{-}_{k}$'s in $R_{-}$. Note
that from Remark~\ref{re1_boundparts} it follows that
$a^{\pm}_{k}$'s and $b^{\pm}_{k}$ are in $S$. Hence, by
Lemma~\ref{le1_suturbefglug}, $Y_{\pm}=\frac{1}{2}r\partial_{r}$
along $\partial P_{\pm}$. The construction of $a^{\pm}_{k}$'s and
$b^{\pm}_{k}$ implies that $Y_{\pm}$ is positively transverse to
$\partial P_{\pm}$. From the construction we made it is easy to see
that $(\partial P_{+})_{\partial}=\cup_{k=0}^{n-1} a^{+}_{k}$,
$(\partial P_{+})_{int}=\cup_{k=0}^{n-1} b^{+}_{k}$, $(\partial
P_{-})_{\partial}=\cup_{k=0}^{n-1} b^{-}_{k}$ and $(\partial
P_{-})_{int}=\cup_{k=0}^{n-1} a^{-}_{k}$. If $\pi\co[-1,1]\times D
\to D$ is a projection to $D$ along $\partial_{t}$, then from
Equation~(\ref{e2_infgl}) it follows that $\pi((\partial
P_{-})_{\partial})\cap \pi ((\partial P_{+})_{\partial})=\emptyset$.
Observe that
$\varphi_{X_{H}}^{\ast}(\beta_{-}|_{P_{-}})=\beta_{+}|_{P_{+}}$ and
$\varphi_{X_{H}}(a^{+}_{k})=a^{-}_{k}$,
$\varphi_{X_{H}}(b^{+}_{k})=b^{-}_{k}$ for $k=0,\dots,n-1$. Hence,
by definition of $P_{\pm}$, $\varphi_{X_{H}}$ sends $P_{+}$ to
$P_{-}$ in such a way that $(\partial P_{+})_{int}$ maps to
$(\partial P_{-})_{\partial}$ and $(\partial P_{+})_{\partial}$ maps
to $(\partial P_{-})_{int}$.

Next, we follow the gluing procedure overviewed in
Section~\ref{section:glsutcontman} and completely described in
\cite{ColinGhigginiHondaHutchings}. We get a sutured contact
manifold $(S^1\times D^2, \tilde{\Gamma},
U(\tilde{\Gamma}),\tilde{\alpha})$. For simplicity, we omit index
$N$. Observe that the region enclosed by $\partial D$ and
$a^{-}_{k}$ in $D\setminus P_{-}$ contains $a^{+}_{k}$ and the
region enclosed by $\partial D$ and $b^{+}_{k}$ in $D\setminus
P_{+}$ contains $b^{-}_{k}$. Then from the gluing construction and
the form of $\varphi_{X_{H}}$ near the boundary of $P_{+}$ it
follows that $\tilde{\Gamma}$ has $2n$ parallel longitudinal
components.

\subsection{Reeb orbits}
\label{section:reeb_orb}

Consider $(S^1\times D^2, \tilde{\Gamma},
U(\tilde{\Gamma}),\tilde{\alpha})$ obtained in
Section~\ref{section:gluing}. Recall that $\tilde{\Gamma}$ consists
of $2n$ parallel longitudinal curves. Let $\tilde{\xi}$ denote the
contact structure defined by $\tilde{\alpha}$ and
$R_{\tilde{\alpha}}$ denote the Reeb vector field defined by
$\tilde{\alpha}$. The main goal of this section is to understand the
set of embedded, closed orbits of $R_{\tilde{\alpha}}$.

\begin{definition}
Let $S$ be a non-empty set with two non-empty subsets $S_{1}$ and
$S_{2}$ such that $S_{1}\cap S_{2}\ne \emptyset$, and let $f\co
S_{1}\to S_{2}$. A point $s\in S_{1}$ is called a {\em periodic
point} of $f$ of period $n$ if $f^n(s)$ is well-defined, i.e.,
$f^{i}(s)\in S_{1}\cap S_{2}$ for $i=1,\dots,n-1$, and $f^n(s)=s$.
\end{definition}

\begin{lemma}\label{le1_anconstnumboforb}
$R_{\tilde{\alpha}}$ has $n-1$ embedded, closed orbits.
\end{lemma}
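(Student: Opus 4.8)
The plan is to use Remark~\ref{reeborbincompl}, which reduces the problem to understanding the closed orbits of $R_{\tilde{\alpha}}$ inside the compact piece $S^1\times D^2$, together with the explicit description of the Reeb vector field coming from the gluing construction. First I would recall that, by Lemma~\ref{le1_almvertical} and the discussion following it, on $[-1,1]\times D$ the Reeb vector field $R_{\alpha}$ is everywhere collinear to $\partial_t$; in particular every orbit of $R_{\alpha}$ travels monotonically in the $t$-direction and hits the horizontal boundary $R_{\pm}=\{\pm 1\}\times D$. Therefore a closed orbit of $R_{\tilde{\alpha}}$ must, after passing through the top $R_{+}$, be reglued to the bottom $R_{-}$ via the gluing map, run through $[-1,1]\times D$ again, and so on; it closes up precisely when the relevant point of $D$ is a periodic point of the return map. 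The key observation is that the return map induced on $P_{+}\subset R_{+}$ by flowing up, regluing through $\varphi_{X_H}$, and identifying is exactly (a reparametrization of) $\varphi_{X_H}\co P_{+}\to P_{-}$, in the sense of the periodic-point definition just stated, because the contact form was built so that $\alpha=dt+\beta_{\pm}$ near the horizontal boundary and the gluing diffeomorphism is the time-$1$ flow of $X_H$.

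Next I would invoke Remark~\ref{re1_phiperpoints}, which states that the set of periodic points of $\varphi_{X_H}$ on $\mathbb R^2$ is exactly $\{p_k\}_{k=1}^{n-1}$, the $n-1$ saddle points of $H$. Since each $p_k$ lies in the interior of $U_k\subset D$ and is a genuine fixed point of $\varphi_{X_H}$ (not just periodic of higher period) — indeed $H$ is $axy$ near $p_k$ and $X_H$ vanishes there — each $p_k$ gives rise to exactly one embedded closed Reeb orbit, namely the vertical segment $[-1,1]\times\{p_k\}$ closed up by the gluing. Conversely, any embedded closed orbit projects under $\pi$ to a periodic point of the return map; points not in $P_{+}$ (where the gluing is trivial, i.e. one is in the "straight" part coming from $M^{(2)}$ and the side region) cannot return, and points in $P_{+}$ that are not among the $p_k$ leave $\bigcup U_k$ under iteration, so they correspond to $\varphi_{X_H}$-orbits that eventually exit any compact invariant region — using property (2) of Remark~\ref{re1_closedlevsets} (no circular level sets of $H$) and the fact that $\varphi_{X_H}$ preserves level sets of $H$, such an orbit cannot be periodic. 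Hence the closed orbits are in bijection with $\{p_1,\dots,p_{n-1}\}$, giving exactly $n-1$ of them.

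The main obstacle I anticipate is the careful bookkeeping at the gluing interface: one must check that the identification $(x,t)\mapsto(\varphi_{X_H}(x),t-2N)$ used in forming $M^{(1)}$, combined with the truncation and the attachment of the side region $V=[0,\tau_0]\times E$, does not create any \emph{new} closed orbits (e.g. orbits running through the side $E$) and does not destroy the $n-1$ expected ones. For the side region this follows because its Reeb orbits are the interval orbits with large fixed action $\geq 3N'$ which are transverse to $E$ and do not close up; for the matched region one uses that $R_{\tilde\alpha}=\partial_t$ on $(\bigcup_k[-1,1]\times V_k)\cup([-1,1]\times(D\cap S))$, so the return dynamics there is literally that of $\varphi_{X_H}$ on the corresponding subset of $\mathbb R^2$. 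Once this is set up, the count $n-1$ drops out of Remark~\ref{re1_phiperpoints} with no further computation.
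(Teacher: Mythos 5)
Your proposal is correct and follows essentially the same route as the paper: the paper likewise reduces the count to the periodic points of $\varphi_{X_H}|_{P_+}$ via the one-to-one correspondence coming from the form of $\alpha$ on $[-1,1]\times D$ and the gluing construction, and then applies Remark~\ref{re1_phiperpoints} to conclude that these periodic points are exactly the $n-1$ saddle points $p_k$. Your additional care about the side region and the gluing interface is detail the paper leaves implicit, not a different argument.
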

\begin{proof}
First consider $\varphi_{X_{H}}|_{P_{+}}$. Recall that from the
construction of $P_{-}$ and $P_{+}$ it follows that
$\varphi_{X_{H}}(P_{+})=P_{-}$. Hence, by
Remark~\ref{re1_phiperpoints}, $\{ p_{k} \}^{n-1}_{k=1}$ is the set
of periodic points of $\varphi_{X_{H}}|_{P_{+}}$.

From the construction of $\alpha$ on $[-1,1]\times D$ and the gluing
construction it follows that there is a one-to-one correspondence
between the set of embedded Reeb orbits and the set of periodic
points of $\varphi_{X_{H}}|_{P_{+}}$. Thus, there are $n-1$ embedded
closed orbits of $R_{\tilde{\alpha}}$.
\end{proof}

Let $\gamma_{k}$ be the embedded, closed orbit, which corresponds to
the periodic point $p_{k}$, i.e., $\gamma_{k}$ is obtained from
$[-1,1]\times \{p_{k}\}\subset [-1,1]\times D$.

\begin{lemma}\label{le1_anconstsymplact}
$\gamma^{s}_{k}$ is a nondegenerate orbit for $k\in\{1,\dots,n-1\}$
and $s\in \mathbb N$. Moreover $\{\gamma_{k}\}^{n-1}_{k=1}$ is a set
of positive hyperbolic orbits and
$\int\limits_{\gamma_{l}}\tilde{\alpha}=\int\limits_{\gamma_{m}}\tilde{\alpha}$
for $l,m=1,\dots,n-1$.
\end{lemma}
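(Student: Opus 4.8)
The plan is to analyze the linearized return map $P_{\gamma_k}$ of each orbit $\gamma_k$ directly, using the fact that $\gamma_k$ comes from $[-1,1]\times\{p_k\}$ and that near $p_k$ the contact form is the standard vertical one. First I would recall that, by Lemma~\ref{le1_prarfm} and Lemma~\ref{le1_formanalcon}(3), in the coordinates $(x,y)$ about the saddle point $p_k$ we have $H=axy$ with $a>0$ and $\beta=\frac{\varepsilon}{2}(xdy-ydx)$; since $p_k\in V_k$, the Remark after Lemma~\ref{le1_almvertical} shows $R_\alpha=\partial_t$ there, so $R_{\tilde\alpha}$ has vertical trajectory $[-1,1]\times\{p_k\}$, and the Reeb return map after the gluing is exactly the time-$1$ Hamiltonian flow $\varphi_{X_H}$ restricted to a neighborhood of $p_k$ in $P_+$. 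Concretely, from Lemma~\ref{le2_formanalcon}, $X_H=-\frac{ax}{\varepsilon}\partial_x+\frac{ay}{\varepsilon}\partial_y$ near $p_k$, so its time-$1$ flow is the diagonal linear map $(x,y)\mapsto(e^{-a/\varepsilon}x,\,e^{a/\varepsilon}y)$. Its linearization is $P_{\gamma_k}=\mathrm{diag}(e^{-a/\varepsilon},e^{a/\varepsilon})$.

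From this explicit form the three claims follow. Nondegeneracy of $\gamma_k^s$ for all $s\geq 1$: the eigenvalues of $P_{\gamma_k}^s$ are $e^{\mp sa/\varepsilon}$, and since $a,\varepsilon>0$ neither equals $1$, so $1\notin\mathrm{Spec}(P_{\gamma_k}^s)$. Positive hyperbolicity: the eigenvalues $e^{-a/\varepsilon}, e^{a/\varepsilon}$ are distinct positive reals, which by the definition in Section~\ref{section:reeborb_CZ} is precisely the condition for $\gamma_k$ to be positive hyperbolic. (One should note the return map a priori is only defined up to conjugation in $SL_2(\mathbb R)$ and one works with the $J$-perturbed nondegenerate form; the linearized flow being the above diagonal map shows the eigenspaces are genuinely rotated by angle $0=\pi\cdot 0$, so the integer $r$ in the proposition is even, consistent with the positive hyperbolic case.)

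For the equality of actions, I would use that the orbits $\gamma_k$ are all vertical segments $[-1,1]\times\{p_k\}$ of the \emph{same} $t$-length before gluing, and that $\tilde\alpha=Cdt+\beta$ (with no $dt$-term in $\beta$) along the entire region through which these orbits pass. Integrating, $\int_{\gamma_k}\tilde\alpha$ equals $C$ times the total $t$-length of the glued-up vertical orbit; after the gluing construction of Section~\ref{section:glsutcontman}, each closed orbit through $p_k$ is obtained by the same identification $(x,t)\mapsto(\varphi_{X_H}(x),t-2N)$, so the total $t$-periods — hence the actions — coincide for all $k=1,\dots,n-1$. Equivalently, one can invoke Remark~\ref{re1_closedlevsets}(3): all saddle points lie on a single level set $\{H=0\}$ of $H$, and the first-return time of $\varphi_{X_H}$ is governed by the same data near each $p_k$ (the constant $a>0$ from the Morse normal form can be taken uniform, or absorbed), forcing equal periods.

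I expect the main obstacle to be the bookkeeping in the last step: one must check carefully that the gluing parameter $N$ and the truncation in Section~\ref{section:glsutcontman} contribute the \emph{same} additional $t$-length to each of the $n-1$ orbits, i.e.\ that the closed orbit through $p_k$ winds around $S^1\times D^2$ the same number of times with the same total action regardless of $k$. This is where the $\frac{2\pi}{n}$-symmetry of $X_H$ on $D(R_\ast)\setminus D(r_{sing})$ and the symmetric choice of the curves $b_k^\pm$ are used: they guarantee that the combinatorics of the return map — in particular the period of each $p_k$ as a periodic point of $\varphi_{X_H}|_{P_+}$ — is identical for all $k$, so that the resulting Reeb orbits all have period $1$ in the sense of Lemma~\ref{le1_anconstnumboforb} and hence equal $\tilde\alpha$-action.
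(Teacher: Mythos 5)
Your proposal is correct and follows essentially the same route as the paper: both identify the linearized return map of $\gamma_k$ with the linearization of $\varphi_{X_{H}}$ at $p_k$ in the local model $H=axy$, $\beta=\tfrac{\varepsilon}{2}(x\,dy-y\,dx)$, obtaining a diagonal matrix with positive reciprocal eigenvalues distinct from $1$ (hence positive hyperbolicity and nondegeneracy of all iterates), and both get the equality of actions from the fact that each orbit is a vertical segment of $t$-length $2N$ over a fixed point of $\varphi_{X_{H}}$. The paper additionally writes down an explicit symplectic trivialization along each orbit, which is not needed for this lemma.
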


\begin{proof}
Let
\begin{align*}
M^{(0)}=(([-1,1]\times D)\cup (R_{+}(\Gamma)\times[1;\infty))\cup
(R_{+}(\Gamma)\times (-\infty;-1]))
\end{align*}
and
\begin{align*}
\tilde{M}=M^{(0)} \setminus ((P_{+}\times (N, \infty)\cup
(P_{-}\times (-\infty,-N)).
\end{align*}

In addition, let $\alpha_{\tilde{M}}$ denote the contact form on
$\tilde{M}$ and let $\xi_{\tilde{M}}$ denote the contact structure
defined by $\alpha_{\tilde{M}}$.

Consider $[-1,1]\times D\subset \tilde{M}$. From the construction of
$\alpha$ it follows that $\alpha|_{[-1,1]\times
V_{k}}=dt+\beta_{-}$. Since the contact structure on
$[1,\infty)\times P_{+}$ is given by $dt+\beta_{+}$ and the contact
structure on $(-\infty, -1]\times P_{-}$  is given by
$dt+\beta_{-}$, $\alpha_{\tilde{M}}|_{[-N,N]\times
V_{k}}=dt+\beta_{-}$ on $[-N,N]\times V_{k}\subset \tilde{M}$.
Therefore, we get
\begin{align}\label{e10_anconstlength}
\int\limits_{[-N,N]\times \{p_{k}\}} \alpha_{\tilde{M}}=2N.
\end{align}
From the gluing construction and Equation~(\ref{e10_anconstlength})
it follows that $\int_{\gamma_{k}} \tilde{\alpha}=2N$. Note that
$\int_{\gamma_{k}} \tilde{\alpha}$ does not depend on $k$. Hence,
$\int_{\gamma_{l}}\tilde{\alpha}=\int_{\gamma_{m}}\tilde{\alpha}$
for $l,m=1,\dots,n-1$.

Now observe that $H|_{V_{k}}=axy$ and hence
\begin{align*}\varphi_{X_{H}}|_{V_{k}}=
\left ( \begin{array}{ll} \lambda & 0\\
0 & \lambda^{-1}
\end{array}
\right ),
\end{align*}
where $\lambda=e^{a}$. Let the symplectic trivialization of
$\xi_{\tilde{M}}$ along $[-N,N]\times \{p_{k}\}$ be given by the
framing
$(\lambda^{\frac{-N-t}{2N}}\partial_{x},\lambda^{\frac{t+N}{2N}}\partial_{y})$.
Note that the symplectic trivialization of $\xi_{\tilde{M}}$ gives
rise to the symplectic trivialization of $\tilde{\xi}$ along
$\gamma_{k}$.

It is easy to see that the linearized return map $P_{\gamma_{k}}$ is
given by
\begin{align*}
P_{\gamma_{k}}=\left ( \begin{array}{ll} \lambda & 0\\
0 & \lambda^{-1}
\end{array}
\right ).
\end{align*}
Since the eigenvalues of $P_{\gamma_{k}}$ are positive real numbers
different from $1$, $\gamma_{k}$ is a positive hyperbolic orbit.
Hence, $\{\gamma_{k}\}^{n-1}_{k=1}$ is a set of positive hyperbolic
orbits of $R_{\tilde{\alpha}}$. In addition,
$P_{\gamma^{s}_{k}}=P^s_{\gamma_{k}}$. Therefore, the eigenvalues of
$P_{\gamma^{s}_{k}}$ are different from $1$. Hence, $\gamma^{s}_{k}$
is a nondegenerate orbit for $s\in \mathbb N$.

\end{proof}

\section{Calculations}
\label{calculations}

In this section we will calculate the sutured embedded contact
homology, the sutured cylindrical contact homology and the sutured
contact homology of the sutured contact solid torus constructed in
Section~\ref{section:gluing}.

Consider the symplectization $(\mathbb R\times (S^1\times
D^2)^{\ast}, d(e^s\tilde{\alpha}^{\ast}))$ of $((S^1\times
D^2)^{\ast}, \tilde{\alpha}^{\ast})$, where $s$ is the coordinate on
$\mathbb R$ and $((S^1\times D^2)^{\ast}, \tilde{\alpha}^{\ast})$ is
the completion of $(S^1\times D^2,\tilde{\Gamma}, U(\tilde{\Gamma}),
\tilde{\alpha})$. Let $J$ be an almost complex structure on
$(\mathbb R\times (S^1\times D^2)^{\ast},
d(e^s\tilde{\alpha}^{\ast}))$ tailored to $((S^1\times D^2)^{\ast},
\tilde{\alpha}^{\ast})$.

\subsection{Sutured embedded contact homology}
\label{section:sut_emb_cont_hom}

Consider the set of embedded, closed orbits of $R_{\tilde{\alpha}}$.
By Lemma~\ref{le1_anconstnumboforb}, $R_{\tilde{\alpha}}$ has $n-1$
embedded, closed orbits $\gamma_{1},\dots,\gamma_{n-1}$, which are
positive hyperbolic by Lemma~\ref{le1_anconstsymplact}. In addition,
Lemma~\ref{le1_anconstsymplact} implies that all Reeb orbits are
nondegenerate. From the gluing construction, i.e., since
$\{p\}^{n-1}_{k=1}$ is a set of fixed points of $\varphi_{X_{H}}$,
it follows that $[\gamma_{i}]$ is a generator of $H_{1}(S^1\times
D^2; \mathbb Z)$ for $i\in \{1,\dots,n-1\}$ and
$[\gamma_{i}]=[\gamma_{j}]$  for $i,j\in \{1,\dots,n-1\}$. From now
on we identify $H_{1}(S^1\times D^2; \mathbb Z)$ with $\mathbb Z$ in
such a way that $[\gamma_{i}]\in H_{1}(S^1\times D^2; \mathbb Z)$ is
identified with $1 \in \mathbb Z$ for $i\in \{ 1,\dots,n-1\}$.
Recall that multiplicities of hyperbolic orbits in an admissible
orbit set must be equal to $1$. Hence, from
Lemma~\ref{le1_anconstsymplact} it follows that the admissible orbit
sets are of the form $\{ (\gamma_{i_1},1),\dots, (\gamma_{i_s},1)
\}$, where $1\leq i_1<\dots<i_s \leq n-1$. Note that $\emptyset$ is
an admissible orbit set. For ease of notation, we write
$\gamma_{i_1}\dots\gamma_{i_s}$ instead of $ \{(\gamma_{i_1},
1),\dots,(\gamma_{i_s}, 1) \}$ and $1$ instead of $\emptyset$, where
$1\leq i_1<\dots<i_s\leq n-1$.

\begin{lemma}\label{le1_anconstechdiff}
Let $\partial$ be the ECH differential. Then $\partial(a)=0$ for
every admissible orbit set $a$.
\end{lemma}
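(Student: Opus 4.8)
The plan is to show that the ECH differential vanishes for purely action-theoretic reasons. First I would recall from Lemma~\ref{le1_anconstsymplact} that all the embedded closed Reeb orbits $\gamma_1,\dots,\gamma_{n-1}$ are positive hyperbolic, that $\int_{\gamma_k}\tilde\alpha = 2N$ is independent of $k$, and hence that every admissible orbit set $a = \gamma_{i_1}\cdots\gamma_{i_s}$ has symplectic action $\mathcal A(a) = 2Ns$, i.e. the action depends only on the number of orbits appearing in $a$. Since all multiplicities are forced to be $1$ (hyperbolic orbits), the homology class $[a] \in H_1(S^1\times D^2;\mathbb Z) \cong \mathbb Z$ equals $s$ as well, so in fact $\mathcal A(a) = 2N[a]$.

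Next I would invoke the fact that the ECH differential counts $I=1$ holomorphic curves in $\mathcal M^J(a,b)/\mathbb R$ between admissible orbit sets, and that by the relative version of Lemma~\ref{le1_symplactfech} (valid for $J$ tailored to the completion, as noted in the Remark following Proposition~\ref{pr1_echind}), a nonempty moduli space $\mathcal M^J(a,b)$ forces $\mathcal A(a) \geq \mathcal A(b)$, with equality only when $a = b$ and the curve is a union of trivial cylinders. The differential decreases the homology class grading by... actually it preserves the homology class $h$, since the relative class of a $2$-chain has boundary $\sum m_i\alpha_i - \sum n_j\beta_j$ which must be nullhomologous, forcing $[a]=[b]$. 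Combining $[a]=[b]$ with $\mathcal A(a) = 2N[a]$ and $\mathcal A(b) = 2N[b]$ gives $\mathcal A(a) = \mathcal A(b)$. By Lemma~\ref{le1_symplactfech}(2) this forces $a=b$ and the curve to be a union of trivial cylinders; such a curve has $I = 0$ by Proposition~\ref{pr1_echind}(1), not $I=1$. Therefore there are no $I=1$ curves contributing to $\partial$, so $\partial(a)=0$ for every admissible orbit set $a$.

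I should be slightly careful about one subtlety: the differential $\partial$ goes between generators with the \emph{same} homology class $h$, so I am really working within a fixed $h$-graded summand of the chain complex, and within that summand every generator has the \emph{same} action $2Nh$. This is what makes the argument work — the action is constant on each $h$-graded piece, so the strict inequality $\mathcal A(a) > \mathcal A(b)$ needed to have a nontrivial differential can never occur. The main (and really the only) obstacle is making sure the relevant structural results — Lemma~\ref{le1_symplactfech} and Proposition~\ref{pr1_echind} — do apply in the sutured setting; but this is exactly the content of the Remark after Proposition~\ref{pr1_echind}, which asserts these hold for $J$-holomorphic curves in the symplectization of the completion provided $J$ is tailored to $((S^1\times D^2)^\ast, \tilde\alpha^\ast)$, and we chose $J$ to be so tailored at the start of Section~\ref{calculations}. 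So the proof is essentially a two-line consequence of the action computation and the action-monotonicity lemma, with no hard analysis required.
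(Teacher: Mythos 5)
Your proof is correct and follows essentially the same route as the paper's: the action of every admissible orbit set in a fixed homology class $h$ equals $2Nh$, the differential preserves $h$, and Lemma~\ref{le1_symplactfech}(2) together with Proposition~\ref{pr1_echind} then rules out any $I=1$ curves. The only detail the paper adds that you omit is the observation that trivial cylinders are regular, so the genericity hypothesis in Proposition~\ref{pr1_echind} can be dropped; this is a minor point and your handling of the sutured-setting caveat is exactly the paper's.
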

\begin{proof}
Fix $h\in H_{1}(S^1\times D^2; \mathbb Z)$. Let $S_{h}$ be a set of
admissible orbit sets with homology class $h$. It is easy to see
that
\begin{align*}
S_{h}=\left \{
\begin{array}{ll}
\{ \gamma_{i_1}\dots\gamma_{i_h} \}, & \mbox{for}\ 0\leq h \leq n-1;\\
\emptyset, & \mbox{otherwise}.
\end{array}
\right.
\end{align*}
From Lemma~\ref{le1_anconstsymplact} it follows that for every $a\in
S_{h}$, $\mathcal A(a)=2Nh$. Let $a,b\in S_{h}$ be different
admissible orbit sets. Then, as we already mentioned,
\begin{align}\label{e11_anconstdifforbss}
\mathcal A(a)=\mathcal A(b)=2Nh.
\end{align}
From Equation~(\ref{e11_anconstdifforbss}) and the second part of
Lemma~\ref{le1_symplactfech} it follows that $M^{J}(a,b)$ is empty.
In addition, by the second part of Lemma~\ref{le1_symplactfech},
every element in $M^{J}(a,a)$ maps to a union of trivial cylinders.
Hence, by Proposition~\ref{pr1_echind} and definition of $\partial$,
$\partial(a)=0$. Note that trivial cylinders are regular and hence
we can omit the genericity assumption in
Proposition~\ref{pr1_echind}. Thus, for every admissible orbit set
$a$, $\partial(a)=0$.
\end{proof}

Again, let $S_{h}$ be a set of admissible orbit sets with homology
class $h$.

By counting the number of element in $S_{h}$, we get
\begin{align}\label{e9_acech}
|S_{h}|=\left \{
\begin{array}{ll}
{n-1 \choose h}, & \mbox{for}\ 0\leq h \leq n-1;\\
0, & \mbox{otherwise}.
\end{array}
\right.
\end{align}
By Equation~(\ref{e9_acech}) and Lemma~\ref{le1_anconstechdiff}, we
get
\begin{align*}
ECH(S^1\times D^2,\tilde{\Gamma},\tilde{\alpha},J,h)\simeq
\Lambda^{\ast}\langle\gamma_1,\dots,\gamma_{n-1}\rangle\simeq \left
\{
\begin{array}{ll}
\mathbb Z^{n-1 \choose h}, & \mbox{for}\ 0\leq h \leq n-1;\\
0, & \mbox{otherwise}.
\end{array}
\right.
\end{align*}
Here $\Lambda^{\ast}\langle\gamma_1,\dots,\gamma_{n-1}\rangle$ is
the exterior algebra over $\mathbb Z$ generated by
$\gamma_{1},\dots,\gamma_{n-1}$. Thus, we obtain
\begin{align*}
ECH(S^1\times D^2,\tilde{\Gamma},\tilde{\alpha}, J)=
\bigoplus\limits_{h\in H_{1}(S^1\times D^2; \mathbb Z)}
ECH(S^1\times D^2,\tilde{\Gamma},\tilde{\alpha}, J, h)\simeq \mathbb
Z^{\sum\limits_{h=0}^{n-1}{n-1 \choose h}}=\mathbb Z^{2^{n-1}}.
\end{align*}
This completes the proof of Theorem~\ref{suturedECH}.

\begin{remark}
Note that for the constructed sutured contact solid torus, the
sutured Floer homology coincides with the sutured embedded contact
homology. In fact, they agree in each Spin$^c$-structure. This
follows from Proposition 9.2 in \cite{Juh'asz3}, where the sutured
Floer homology of every sutured manifold $(S^1\times D^2, \Gamma)$
has been computed by Juh\'{a}sz.
\end{remark}

\subsection{Sutured cylindrical contact homology}
\label{section:sut_cyl_cont_hom}

First recall that Lemma~\ref{le1_anconstnumboforb} implies that all
closed orbits of $R_{\tilde{\alpha}}$ are nondegenerate.

\begin{remark}\label{cch_indep}
Note that there are no contractible Reeb orbits. Hence, from
Fact~\ref{welldefcylconthom}, Remark~\ref{actchomcchom} and the fact
that $\pi_{1}(S^1\times D^2; \mathbb Z)\simeq H_{1}(S^1\times D^2;
\mathbb Z)\simeq \mathbb Z$ it follows that for all $h\in
H_{1}(S^1\times D^2; \mathbb Z)$ $HC^{cyl, h}_{\ast}(S^1\times
D^2,\tilde{\Gamma},\tilde{\alpha}_{\delta}, J)$ is defined, i.e.,
$\partial^2=0$, and is independent of contact form $\tilde{\alpha}$
for the given contact structure $\tilde{\xi}$ and the almost complex
structure $J$.
\end{remark}

Note that $C_{\ast}(\tilde{\alpha}, J)$ splits as
\begin{align*}
C_{\ast}(\tilde{\alpha}, J)=\bigoplus\limits_{h\in H_{1}(S^1\times
D^2; \mathbb Z)} C_{\ast}^{h}(\tilde{\alpha}, J).
\end{align*}

From Lemma~\ref{le1_anconstsymplact} it follows that
$\{\gamma_{k}\}^{n-1}_{k=1}$ is a set of positive hyperbolic orbits.
Hence, the definition of the Conley-Zehnder index implies that
$\mu_{\tau}(\gamma^{s}_{l})$ is even for $l\in \{1,\dots,n-1\}$ and
$s\in \mathbb N$. Then, according to the definition of a good orbit,
it follows that $\gamma^{s}_{l}$ is a good orbit for $l\in
\{1,\dots,n-1\}$ and $s\in \mathbb N$. Hence, we get
\begin{equation}\label{e15_accch}
C_{\ast}^{h}(\tilde{\alpha}, J)=\left \{
\begin{array}{ll}
\mathbb Q \langle \gamma_{1}^{h},\dots,\gamma_{n-1}^{h}
\rangle, & \mbox{for}\ h\geq 1;\\
0, & \mbox{otherwise}.
\end{array}
\right.
\end{equation}
Here $\mathbb Q \langle \gamma_{1}^{h},\dots,\gamma_{n-1}^{h}
\rangle$ is a $\mathbb Q$-module freely generated by
$\gamma_{1}^{h},\dots,\gamma_{n-1}^{h}$. Now recall that
Lemma~\ref{le1_anconstsymplact} says that
$\int_{\gamma_{l}}\tilde{\alpha}=\int_{\gamma_{m}}\tilde{\alpha}=2N$,
where $l,m\in \{1,\dots,n-1\}$. Therefore,
\begin{align}\label{e21_anconsymact}
\int\limits_{\gamma^{s}_{l}}\tilde{\alpha}=\int\limits_{\gamma^{s}_{m}}\tilde{\alpha}=2Ns,
\end{align}
for $l,m\in \{1,\dots,n-1\}$ and $s\in \mathbb N$.

Remark~\ref{actchomcchom}, Equation~(\ref{e21_anconsymact}) and
definition of the cylindrical contact homology differential imply
that $\partial|_{C_{\ast}^{h}(\tilde{\alpha}, J)}=0$ for all $h\in
H_{1}(S^1\times D^2; \mathbb Z)$. Thus, using
Equation~(\ref{e15_accch}), we obtain
\begin{align}\label{e22_anconstcch}
HC^{cyl,h}(S^{1}\times D^2,\tilde{\Gamma},\tilde{\xi})&=
\bigoplus\limits_{i\in \mathbb Z} HC^{cyl,h}_{i}(S^1\times
D^2,\tilde{\Gamma},\tilde{\xi})\nonumber\\
&\simeq \left \{
\begin{array}{ll}
\mathbb Q^{n-1}, & \mbox{for}\ h\geq 1;\\
0, & \mbox{otherwise}.
\end{array}
\right.
\end{align}
Finally, Equation~(\ref{e22_anconstcch}) implies that
\begin{align*}
HC^{cyl}(S^{1}\times D^2,\tilde{\Gamma},\tilde{\xi})=
\bigoplus\limits_{h\geq 1}\bigoplus\limits_{i\in \mathbb Z}
HC^{cyl,h}_{i}(S^1\times D^2,\tilde{\Gamma},\tilde{\xi})\simeq
\bigoplus\limits_{h\geq 1}\mathbb Q^{n-1}.
\end{align*}
This completes the proof of Theorem~\ref{suturedCylContHom}.

\subsection{Sutured contact homology}
\label{section:sut_cont_hom}

Recall that from Lemma~\ref{le1_anconstnumboforb} it follows that
all closed orbits of $R_{\tilde{\alpha}}$ are nondegenerate. From
the discussion in the previous section it follows that
$\gamma^{s}_{l}$ is a good orbit for $l\in \{1,\dots,n-1\}$ and
$s\in \mathbb N$. Hence, the supercommutative algebra $\mathcal
A(\tilde{\alpha}, J)$ is generated by $\gamma^{s}_{l}$ for $l\in
\{1,\dots,n-1\}$ and $s\in \mathbb N$. Note that $\mathcal
A(\tilde{\alpha}, J)$ splits as
\begin{align*}
\mathcal A(\tilde{\alpha}, J)=\bigoplus\limits_{h\in H_{1}(S^1\times
D^2; \mathbb Z)} \mathcal A^{h}(\tilde{\alpha}, J),
\end{align*}
where $\mathcal A^{h}(\tilde{\alpha}, J)$ is generated, as a vector
space over $\mathbb Q$, by monomials of total homology class $h$.
Hence,
\begin{align*}
\mathcal A^{h}(\tilde{\alpha}, J)\simeq \mathbb Q^{\rho(n,h)},
\end{align*}
where $p(n,h)$ denotes the coefficient of $x^{h}$ in the generating
function $\prod^{\infty}_{s=1}(1+x^{s})^{n-1}$.

In \cite[Corollary 4.2]{Fabert}, Fabert proved that the differential
in contact homology and rational symplectic field theory is strictly
decreasing with respect to the symplectic action filtration. In
other words, branched covers of trivial cylinders do not contribute
to contact homology and rational symplectic  field theory
differentials.

From Lemma~\ref{le1_anconstsymplact} it follows that all generators
of $\mathcal A^{h}(\tilde{\alpha}, J)$ have the same symplectic
action and hence $\partial|_{\mathcal A^{h}(\tilde{\alpha}, J)}=0$
for all $h\in H_{1}(S^1\times D^2; \mathbb Z)$. Thus,
\begin{align*}
HC^{h}(S^1\times D^2, \tilde{\Gamma}, \tilde{\xi})=
\bigoplus\limits_{i\in \mathbb Z} HC^{h}_{i}(S^1\times
D^2,\tilde{\Gamma},\tilde{\xi}) &\simeq \mathbb Q^{\rho(n,h)}
\end{align*}
and hence
\begin{align*}
HC(S^{1}\times D^2,\tilde{\Gamma},\tilde{\xi})=
\bigoplus\limits_{h\in \mathbb Z}\bigoplus\limits_{i\in \mathbb Z}
HC^{h}_{i}(S^1\times D^2,\tilde{\Gamma},\tilde{\xi})\simeq
\bigoplus\limits_{h\in \mathbb Z}\mathbb Q^{\rho(n,h)}.
\end{align*}
This completes the proof of Theorem~\ref{suturedContHom}.

\end{document}